\newcommand{\com}{{\mathbb C}}
\newcommand{\PP}{{\mathbf{P}}}
\newcommand{\proj}{\PP}
\newcommand{\oh}{{\mathcal O}}
\newcommand{\MM}{{\mathcal{M}}}
\newcommand{\Q}{\mathbb{Q}}
\newcommand{\Z}{\mathbb{Z}}
\newcommand{\rarr}{\rightarrow}
\DeclareMathOperator{\Aut}{Aut}
\DeclareMathOperator{\FZ}{\mathsf{FZ}}
\DeclareMathOperator{\SQ}{\mathsf{SQ}}
\newtheorem{Theorem}{Theorem}
\newtheorem{Lemma}{Lemma}
\newtheorem{Proposition}[Lemma]{Proposition}
\newtheorem{Conjecture}{Conjecture}
\begin{document}
\title{Relations in the tautological ring of the
moduli space of curves}
\author{R. Pandharipande and A. Pixton}
\date{January 2020}
\maketitle

\vspace{-10pt}
{\centering \em Dedicated to David Mumford on the occasion of his 80th birthday\par}

\begin{abstract} 
The virtual geometry of the moduli space of stable
quotients is used to obtain Chow relations among the $\kappa$
classes on the moduli space of nonsingular genus $g$
curves. In a series of steps, the stable quotient
relations are rewritten in successively  simpler 
forms. The final result is the proof of the Faber-Zagier
relations (conjectured in 2000).
\end{abstract}

\maketitle

\setcounter{tocdepth}{1} 
\tableofcontents


\setcounter{section}{-1}
\section{Introduction}

\subsection{Tautological classes}
For $g\geq 2$, let $\MM_g$ be the moduli space of nonsingular, projective,
genus $g$ curves over $\com$, and let
\begin{equation}\label{g55}
\pi: \mathcal{C}_g \rightarrow \mathcal{M}_g 
\end{equation}
be the universal curve. We view $\MM_g$ and $\mathcal{C}_g$ as
nonsingular, quasi-projective, Deligne-Mumford stacks. 
However, the orbifold
perspective is sufficient for most of our purposes.

The relative dualizing sheaf $\omega_\pi$ of the morphism \eqref{g55}
is used to define  
the cotangent line class
$$\psi = c_1(\omega_\pi)\in A^1(\mathcal{C}_g,\mathbb{Q})\ .$$
The $\kappa$ classes are defined by push-forward,
$$\kappa_r = \pi_*( \psi^{r+1}) \in A^{r}(\MM_g)\ .$$
The {\em tautological ring} 
$$R^*(\MM_g) \subset A^*(\MM_g, \mathbb{Q})$$
is the $\mathbb{Q}$-subalgebra generated by all of the
$\kappa$ classes. 
Since 
$$\kappa_{0}= 2g-2 \in \mathbb{Q}$$
is a multiple of the fundamental class, we need not take 
$\kappa_0$ as a generator.
There is a canonical quotient
$$\mathbb{Q}[\kappa_1,\kappa_2, \kappa_3, \ldots] 
\stackrel{q}{\longrightarrow} R^*(\MM_g) \longrightarrow 0\ .$$
We study here the ideal of relations among the
$\kappa$ classes, the kernel of $q$.

We may also define a tautological ring $RH^*(\MM_g) \subset 
H^*(\MM_g,\mathbb{Q})$
generated by the $\kappa$ classes in cohomology.
Since there is a natural factoring
$$\mathbb{Q}[\kappa_1,\kappa_2, \kappa_3, \ldots] 
\stackrel{q}{\longrightarrow} R^*(\MM_g) 
\stackrel{c}{\longrightarrow} RH^*(\MM_g)$$
via the cycle class map $c$, algebraic relations among the
$\kappa$ classes are also cohomological relations.
Whether or not there exist {\em more} cohomological relations is not yet
settled.

There are two basic motivations for the study of the 
tautological rings $R^*(\MM_g)$.
The first is Mumford's conjecture, proven in 2002 
by Madsen and  Weiss \cite{MW},
$$\lim_{g\rightarrow \infty} H^*(\MM_g, \mathbb{Q}) \ = \mathbb{Q}[\kappa_1,
\kappa_2, \kappa_3, \ldots ]
,$$
determining the {\em stable} cohomology of the moduli 
of curves. While the $\kappa$ classes do not exhaust
$H^*(\MM_g,\mathbb{Q})$, there are no other stable classes.
The study of $R^*(\MM_g)$ undertaken 
here is from the opposite perspective ---
we are interested in the ring of $\kappa$ classes for fixed $g$.

The second motivation is from a large body of cycle
class calculations on $\MM_g$ (often related to Brill-Noether
theory). The answers invariably lie in the tautological ring
$R^*(\MM_g)$. The first definition of the tautological rings
by Mumford \cite{M} was at least partially motivated by such
algebro-geometric cycle constructions.

\subsection{Faber-Zagier conjecture}
Faber and  Zagier have conjectured a remarkable set
of relations among the $\kappa$ classes in $R^*(\MM_g)$.
Our main result is a proof of the Faber-Zagier relations, stated
as Theorem 1 below, 
by a geometric construction involving the virtual class
of the moduli space of stable quotients.

To write the Faber-Zagier relations, we will require
the following notation.
Let the variable set
$$\mathbf{p} = \{\ p_1,p_3,p_4,p_6,p_7,p_9,p_{10}, \ldots\ \}$$
be indexed by positive integers {\em not} congruent
to $2$ modulo $3$.
Define the series
\begin{multline*}
\Psi(t,\mathbf{p}) =
(1+tp_3+t^2p_6+t^3p_9+\ldots) \sum_{i=0}^\infty \frac{(6i)!}{(3i)!(2i)!} t^i
\\ +(p_1+tp_4+t^2p_7+\ldots) 
\sum_{i=0}^\infty \frac{(6i)!}{(3i)!(2i)!} \frac{6i+1}{6i-1} t^i \ .
\end{multline*}
Since $\Psi$ has constant term 1, we may take the logarithm.
Define the constants $C_r^{\text{\tiny{{\sf FZ}}}}(\sigma)$ by the formula
$$\log(\Psi)= 
\sum_{\sigma}
\sum_{r=0}^\infty C_r^{\text{\tiny{{\sf FZ}}}}(\sigma)\ t^r 
\mathbf{p}^\sigma
\ . $$
The above sum is over all partitions $\sigma$ of size 
$|\sigma|$ 
which avoid 
 parts congruent to 2 modulo 3. The empty partition is included
in the sum.
To the partition  $\sigma=1^{n_1}3^{n_3}4^{n_4} \cdots$, we associate
the monomial
$\mathbf{p}^\sigma= p_1^{n_1}p_3^{n_3}p_4^{n_4}\cdots$.
Let 
$$\gamma^{\text{\tiny{{\sf FZ}}}}
= 
\sum_{\sigma}
 \sum_{r=0}^\infty C_r^{\text{\tiny{{\sf FZ}}}}(\sigma)
\ \kappa_r t^r 
\mathbf{p}^\sigma
\ .
$$
For a series $\Theta\in \mathbb{Q}[\kappa][[t,\mathbf{p}]]$ in the variables $
\kappa_i$, $t$, and $p_j$, let
$[\Theta]_{t^r \mathbf{p}^\sigma}$ denote the
 coefficient of $t^r\mathbf{p}^\sigma$
(which is a polynomial in the $\kappa_i$).

\begin{Theorem} \label{dddd}
{ In $R^r(\MM_g)$, the Faber-Zagier relation
$$
\big[ \exp(-\gamma^{\text{\tiny{{\sf FZ}}}}) \big]_{t^r \mathbf{p}^\sigma}  = 0$$
holds when
$g-1+|\sigma|< 3r$ and
$g\equiv r+|\sigma|+1 \mod 2$.}
\end{Theorem}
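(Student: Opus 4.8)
The plan is to establish Theorem~\ref{dddd} by the route indicated in the abstract: first produce genuine Chow relations from the virtual geometry of stable quotients, then rewrite them through a sequence of simplifications until they match the Faber--Zagier form on the nose. Concretely, I would work with the moduli space $\overline{Q}$ of stable quotients over the universal curve $\pi:\mathcal{C}_g\to\mathcal{M}_g$, carrying a $\mathbb{C}^*$-action on the auxiliary vector space, and apply virtual localization. The source of the relations is a vanishing principle: the pushforward to $\mathcal{M}_g$ of the relevant part of the virtual class is supported on a locus of bounded dimension, so its contributions in codimensions beyond what the nondegenerate fixed loci can carry must vanish in $R^*(\mathcal{M}_g)$. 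I expect this vanishing to be exactly what enforces the numerical constraints $g-1+|\sigma|<3r$ and $g\equiv r+|\sigma|+1\pmod 2$: the inequality should drop out of a virtual-dimension count (with the factor $3$ coming from the degree of the obstruction contribution that governs the hypergeometric coefficients), and the parity from the $\mathbb{C}^*$-weight bookkeeping in the localization sum.

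The heart of the argument is combinatorial. The $\mathbb{C}^*$-action on the target has two fixed points, and I would organize the fixed-point contributions into the two hypergeometric series
\[
A(t)=\sum_{i=0}^\infty \frac{(6i)!}{(3i)!(2i)!}\,t^i,\qquad
B(t)=\sum_{i=0}^\infty \frac{(6i)!}{(3i)!(2i)!}\,\frac{6i+1}{6i-1}\,t^i,
\]
where the shift $\tfrac{6i+1}{6i-1}$ records the difference in normalization between the two fixed points. The remaining data of the fixed loci (the multiplicities of the contributing components) is encoded by the variables $p_j$, the exclusion of $j\equiv 2\bmod 3$ reflecting the mod-$3$ structure of the weights; the two families $\{p_3,p_6,\dots\}$ and $\{p_1,p_4,\dots\}$ attach respectively to the $A$- and $B$-series. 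The key identification I would carry out is that the total localization generating function is precisely $\Psi(t,\mathbf{p})$.

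Granting this dictionary, the passage to the stated relation is formal bookkeeping. Taking $\log\Psi$ collects the contributions into the $\kappa$-linear class $\gamma^{\mathsf{FZ}}$ with coefficients $C_r^{\mathsf{FZ}}(\sigma)$, and the multiplicative relation produced by the geometry is obtained by exponentiating: the exponential appears in passing from the additive (linear-in-$\kappa$) class to the product of Chern-class contributions, and the sign in $\exp(-\gamma^{\mathsf{FZ}})$ reflects the reciprocal Euler-class normalization in the localization formula. I would then verify degree by degree that extracting $[\,\cdot\,]_{t^r\mathbf{p}^\sigma}$ from $\exp(-\gamma^{\mathsf{FZ}})$ reproduces exactly the polynomial in the $\kappa_i$ that the geometric relation sets to zero in $R^r(\mathcal{M}_g)$.

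The main obstacle is the exact identification of the raw localization series with $\Psi$. The coefficients emerging directly from the virtual-class and fixed-point computation will not be in the clean hypergeometric shape of $A(t)$ and $B(t)$; reorganizing them — controlling the vertex contributions, the edge/weight factors, and the automorphism corrections — into precisely these two series with the correct $\mathbf{p}$-indexing is where the ``series of steps'' of simplification must be done, and it is where essentially all the labor lies. By comparison, once the geometric model is fixed, the range inequality and the parity condition follow from a direct dimension and weight-parity analysis, and the final $\log$/$\exp$ matching is a routine manipulation of formal power series.
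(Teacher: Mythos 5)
Your overall route is indeed the paper's route in outline (stable quotient spaces, $\mathbb{C}^*$-localization, then successive rewriting), but the concrete mechanisms you propose for the key identification are not how the argument can work, and the gaps are substantial. First, the raw localization output is nowhere near $\Psi$: applying the virtual localization formula to $0^c\cap[Q_g(\mathbf{P}^1,d)]^{vir}$ produces relations built from the series $\Phi(t,x)=\sum_{d\geq 0}\prod_{i=1}^d(1-it)^{-1}\,\frac{(-1)^d}{d!}\frac{x^d}{t^d}$, carrying an auxiliary degree parameter $d$ that has no counterpart in the Faber--Zagier relations; eliminating $d$ is itself a serious step. The hypergeometric series $A(z)=\sum_i\frac{(6i)!}{(3i)!(2i)!}(z/72)^i$ does not arise from any weight or normalization count at the fixed points: it enters only after Ionel's change of variables $u=t/\sqrt{1+4x}$, $y=-x/(1+4x)$, through the nontrivial identity $\sum_k c_{k,k}z^k=\log A(z)$ for the \emph{diagonal} coefficients of the transformed potential, and the relations matching the factor $3$ in $g-1+|\sigma|<3r$ are extracted by varying $d$ and isolating the equal-$u$-and-$y$-degree part (the $\delta=0$ relations); the inequality is a degree bound on the prefactor $(1+4y)^{(r-|\sigma|-g+2d-1)/2}$, not a virtual-dimension count with ``the factor $3$ coming from the obstruction degree.'' Likewise, your picture of the two fixed points furnishing $A$ and $B$ respectively is incorrect: both fixed loci are $\mathcal{C}_g^d/\mathbb{S}_d$ and contribute Chern classes of the same $K$-theory class $\mathbb{F}_d$ (one evaluation at $t$, one at $-t$); the second series $B$ appears only at the very end through the ratio $C=B/A$ and its Riccati equation $12z^2\frac{dC}{dz}=1+4zC-C^2$.

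Second, the final comparison is not ``routine log/exp bookkeeping,'' and the mod-$3$ indexing is not a localization weight phenomenon. The stable quotient relations naturally involve variables $p_j$ for all $j\geq 1$, while the Faber--Zagier relations avoid $j\equiv 2 \bmod 3$; the reconciliation is that the $p_{3k}$-variables in the Faber--Zagier set are redundant \emph{at the level of the ideal} (not the linear span), after which one reindexes $p_{3k+1}\mapsto p_{k+1}$. Even then the two families of relations are not equal term by term: each stable quotient relation equals the corresponding Faber--Zagier relation plus a $\mathbb{Q}[\kappa]$-combination of Faber--Zagier relations attached to strictly shorter partitions, i.e.\ the transition matrix is triangular with unit diagonal, and proving this requires expanding everything in the basis $\{z^aC^b\}_\kappa$, a recursion for the polynomials expressing the derived series $C_n$ in $C$, and an exponential-type lemma ($F=e^{yG}$) to match all higher powers of $C$ — together with a check that the lower-order terms satisfy the correct inequality and parity so they are again valid relations. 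None of this is present in your proposal, and the places where you do commit to a mechanism (hypergeometric coefficients from fixed-point normalization, mod-$3$ from weights, the inequality from a dimension count) would each fail if pursued as stated.
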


The dependence upon the genus $g$ in the Faber-Zagier relations of 
Theorem \ref{dddd} occurs in the inequality, the modulo 2 
restriction, and via $\kappa_0=2g-2$. 
For a given genus $g$ and codimension $r$,
Theorem \ref{dddd} provides only {\em finitely} many relations.
While not immediately clear
from the definition,
the $\mathbb{Q}$-linear span of the
Faber-Zagier relations determines an ideal in $\mathbb{Q}[\kappa_1,\kappa_2, \kappa_3, \ldots]$ --- the matter is discussed in  Section \ref{pppp} and a subset of the Faber-Zagier relations generating the same ideal is described.

As a corollary of our proof of Theorem \ref{dddd} via the moduli space of stable
quotients, we obtain the following stronger boundary result.
If $g-1+|\sigma|< 3r$ and
$g\equiv r+|\sigma|+1 \mod 2$, then
\begin{equation}\label{bbbbb}
\big[ \exp(-\gamma^{\text{\tiny{{\sf FZ}}}}) \big]_{t^r \mathbf{p}^\sigma}  \in R^*(\partial\overline{\mathcal{M}}_g)\ 
.
\end{equation}
Not only is the Faber-Zagier relation 0 on $R^*(\mathcal{M}_g)$, but the
relation is equal to a tautological class on the boundary of
the moduli space $\overline{\mathcal{M}}_g$.

\subsection{Gorenstein rings}
By results of  Faber \cite{Faber} and  Looijenga \cite{L}, we have
\begin{equation}\label{gvvg}
\text{dim}_\mathbb{Q}\ R^{g-2}(\MM_g) =1, \ \ \ R^{>g-2}(\MM_g) =0 .\ 
\end{equation}
A canonical parameterization of $R^{g-2}(\MM_g)$ is obtained via 
integration. Let 
$$\mathbb{E} \rightarrow \MM_g$$
be the {\em Hodge bundle} with fiber $H^0(C,\omega_C)$ over the
moduli point $[C]\in \MM_g$. Let $\lambda_k$ denote the
$k^{th}$ Chern class of $\mathbb{E}$.
The linear map
$$\epsilon: \mathbb{Q}[\kappa_1,\kappa_2, \kappa_3, \ldots] \longrightarrow
\mathbb{Q}, \ \ \ \ \ \ \ 
f(\kappa) \stackrel{\epsilon}{\longmapsto} \int_{\overline{\MM}_g} f(\kappa) \cdot \lambda_g\lambda_{g-1}
$$
factors through $R^*(\MM_g)$ and
determines an isomorphism
$$\epsilon: R^{g-2}(\MM_g) \cong \mathbb{Q}$$
via the non-trivial evaluation
\begin{equation} 
\label{sdffds}
\int_{\overline{\MM}_{g}} \kappa_{g-2} \lambda_g \lambda_{g-1}
= \frac{1}{2^{2g-1}(2g-1)!!} \frac{|B_{2g}|}{2g} \ .
\end{equation}
A survey of the construction and properties of $\epsilon$ 
can be found in \cite{FPlog}.

The evaluations under $\epsilon$ of all polynomials in the
$\kappa$ classes are determined by the following formulas.
First,  
the Virasoro constraints for surfaces \cite{GP} imply a related
evaluation
previously conjectured in \cite{Faber}: 
\begin{equation} \label{lamgg}
\int_{\overline{\MM}_{g,n}} 
\psi_1^{\alpha_1} \cdots \psi_n^{\alpha_n} \lambda_g \lambda_{g-1}
= \frac{(2g+n-3)! (2g-1)!!}{(2g-1)!\prod_{i=1}^n (2\alpha_i-1)!!}
\int_{\overline{\MM}_{g}} \kappa_{g-2} \lambda_g \lambda_{g-1},
\end{equation}
where $\alpha_i>0$.
Second, a
basic relation (due to Faber) holds:
\begin{equation}\label{ffbb}
\int_{\overline{\MM}_{g,n}} 
\psi_1^{\alpha_1} \cdots \psi_n^{\alpha_n} \lambda_g \lambda_{g-1} =
\sum_{\sigma\in \mathsf{S}_n} \int_{\overline{\MM}_g}
\kappa_\sigma \lambda_g\lambda_{g-1}\ .
\end{equation}
The sum on the right is over all elements of the
symmetric group $\mathbb{S}_n$,
$$\kappa_\sigma = \kappa_{|c_1|} \ldots \kappa_{|c_r|}$$
where $c_1,\ldots ,c_r$ is the set partition obtained
from the cycle decomposition of $\sigma$, and
$$|c_i|= \sum_{j\in c_i} (\alpha_j -1)\ .$$
Relation \eqref{ffbb} is triangular and can be inverted
to express the $\epsilon$ evaluations of the $\kappa$
monomials in terms of \eqref{lamgg}.

Computations of the tautological rings in low genera led
  Faber to formulate the following conjecture in  1991.

\begin{Conjecture} For all $g\geq 2$ and all $0 \leq k \leq g-2$, the pairing
\begin{equation} \label{PPP}
R^{k}(\MM_g) \times R^{g-2-k}(\MM_g) \xrightarrow{\ \ \epsilon \ \circ\ 
\cup\ \ } \mathbb{Q}\end{equation}
is perfect.
\end{Conjecture}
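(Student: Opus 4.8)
The plan is to pin down the graded dimensions of $R^*(\MM_g)$ from above using Theorem \ref{dddd}, and to exploit the explicit evaluations \eqref{sdffds}, \eqref{lamgg}, \eqref{ffbb} to compute the pairing \eqref{PPP} completely, thereby reducing the conjecture to the non-vanishing of an explicit determinant. Since $R^{g-2}(\MM_g)$ is one-dimensional by \eqref{gvvg} and $\epsilon$ identifies it with $\Q$, perfectness of \eqref{PPP} is equivalent to the non-degeneracy of the bilinear form $(\alpha,\beta)\mapsto \epsilon(\alpha\cup\beta)$; non-degeneracy on both sides forces $\dim_\Q R^k(\MM_g)=\dim_\Q R^{g-2-k}(\MM_g)$ and the desired isomorphism $R^k\cong (R^{g-2-k})^\vee$.

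Concretely, let $\FF^*_g=\Q[\kappa_1,\kappa_2,\ldots]/I_g$, where $I_g$ is the ideal spanned by the Faber--Zagier relations of Theorem \ref{dddd}. Then $q$ factors through $\FF^*_g$, giving surjections $\FF^k_g\twoheadrightarrow R^k(\MM_g)$ and the upper bound $\dim_\Q R^k(\MM_g)\leq \dim_\Q \FF^k_g$. Granting the (separately checkable) symmetry $\dim_\Q\FF^k_g=\dim_\Q\FF^{g-2-k}_g$ of the Faber--Zagier ring, fix monomial bases $\{\kappa_\mu\}$ of $\FF^k_g$ and $\{\kappa_\nu\}$ of $\FF^{g-2-k}_g$ and form the square Gram matrix $G_k=\big(\epsilon(\kappa_\mu\kappa_\nu)\big)_{\mu,\nu}$. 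Each entry is a computable rational number: expand $\kappa_\mu\kappa_\nu$, convert the resulting $\kappa$-monomials into $\psi$-integrals over $\overline{M}_{g,n}$ by inverting the triangular relation \eqref{ffbb}, and evaluate using \eqref{lamgg} normalized by \eqref{sdffds}. The key observation is that the pairing on $R$ is exactly the one induced by $G_k$ through the surjections, so $\operatorname{rank}G_k\leq \dim_\Q R^k(\MM_g)\leq \dim_\Q\FF^k_g$; hence if $\operatorname{rank}G_k=\dim_\Q\FF^k_g$ the surjection $\FF^k_g\to R^k(\MM_g)$ has no kernel, is an isomorphism, and \eqref{PPP} is perfect.

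The whole conjecture thus collapses to one assertion: $\det G_k\neq 0$ for every $g\geq2$ and $0\leq k\leq g-2$. This is exactly where the main obstacle lies and where I expect a genuinely new idea is needed. The entries of $G_k$ are controlled by the intricate set-partition combinatorics of \eqref{ffbb} together with the double-factorial products $\prod_i(2\alpha_i-1)!!$ of \eqref{lamgg}, and no closed form, positivity, or factorization making $\det G_k$ manifestly nonzero is apparent. One promising avenue is to analyze the form as $g\to\infty$, where the Madsen--Weiss theorem gives $\lim_g H^*(\MM_g,\Q)=\Q[\kappa_1,\kappa_2,\ldots]$ and the stable pairing should degenerate to a tractable limit, and then to control the finite-genus corrections; but establishing $\det G_k\neq 0$ for each fixed $g$ is the crux and is not settled by the present methods, which supply only the relations of Theorem \ref{dddd} and not their completeness.
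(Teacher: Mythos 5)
The statement you are addressing is Conjecture 1 of the paper; it is an \emph{open problem} there, not a theorem, so there is no proof in the paper to compare against --- the only question is whether your argument closes it, and it does not. You concede this yourself (``the crux \ldots is not settled by the present methods''), so at best you have a reduction. But the reduction is flawed in a way the paper itself makes explicit. The logical direction is wrong: $\det G_k\neq 0$ for all $k$ is a \emph{sufficient} condition for perfectness of \eqref{PPP}, not an equivalent one, because the pairing on $R^*(\MM_g)$ could be perfect while the induced pairing on the a priori larger Faber--Zagier quotient $\FF^*_g$ is degenerate --- exactly what happens if $R^*(\MM_g)$ carries relations beyond the Faber--Zagier ones. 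So Conjecture 1 does not ``collapse to'' your determinant statement; rather, your determinant statement is equivalent to Conjecture 1 \emph{and} Conjecture 2 holding simultaneously in the relevant degrees.

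Second, and fatally, that stronger statement is known to be false. The paper records Faber's computation \eqref{ineq}: in genus $24$, codimension $12$, the Faber--Zagier quotient and the Gorenstein quotient differ, with exactly one more relation holding in the Gorenstein quotient. Since the Faber--Zagier relations lie in the kernel of the $\epsilon$-pairing (they were proven to hold in $R^*_{\mathrm{G}}$ in 2002, as the paper recalls), your Gram matrix $G_{12}$ for $g=24$ is precisely the pairing matrix on $R^{12}_{\mathrm{FZ}}(\MM_{24})\times R^{10}_{\mathrm{FZ}}(\MM_{24})$, and the rank of that matrix equals $\dim_\Q R^{12}_{\mathrm{G}}(\MM_{24})=\dim_\Q R^{12}_{\mathrm{FZ}}(\MM_{24})-1$. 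Hence $\operatorname{rank}G_{12}<\dim_\Q\FF^{12}_{24}$ and $\det G_{12}=0$: the single assertion your argument needs is provably false, and no analysis of the set-partition combinatorics or of the stable $g\to\infty$ limit can rescue it. This is exactly the tension the paper highlights --- Conjectures 1 and 2 cannot both be true, so any strategy that would establish both at once, as yours would, is structurally ruled out. (A minor additional gap: the symmetry $\dim_\Q\FF^k_g=\dim_\Q\FF^{g-2-k}_g$ that you ``grant'' is itself unproven.)
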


\noindent The pairing \eqref{PPP} is the 
ring multiplication $\cup$
of $R^*(\MM_g)$ composed with $\epsilon$.
A perfect pairing identifies the first vector space with the
dual of the second. If Faber's conjecture is true in genus $g$, then
$R^*(\MM_g)$ is a Gorenstein local ring.

Let $\mathcal{I}_g \subset R^*(\MM_g)$ be the ideal determined by the
kernel of the pairing \eqref{PPP} in Faber's conjecture.
Define the
{\em Gorenstein quotient}
$$R^*_{\mathrm{G}}(\MM_g) = \frac{R^*(\MM_g)}{\mathcal{I}_g}\ .$$ 
If Faber's conjecture is true for $g$, then $\mathcal{I}_g=0$
and $R^*_{\mathrm{G}}(\MM_g)= R^*(\MM_g)$.

The pairing \eqref{PPP} can be evaluated directly on
polynomials in the $\kappa$ classes via \eqref{sdffds}-\eqref{ffbb}.
The Gorenstein quotient $R_{\mathrm{G}}^*(\MM_g)$ is
completely determined by the $\kappa$ evaluations and
the ranks \eqref{gvvg}. The ring $R_{\mathrm{G}}^*(\MM_g)$ can therefore
be
studied  as a purely algebro-combinatorial object.

 Faber and  Zagier conjectured the relations of Theorem \ref{dddd}
from a concentrated study of the
Gorenstein quotient $R^*_{\mathrm{G}}(\MM_g)$. The Faber-Zagier
relations were first written in 2000 and were
proven to hold in $R^*_{\mathrm{G}}(\MM_g)$ in 2002. 
The validity of the Faber-Zagier relations in $R^*(\MM_g)$ has been
an open question since then.

\subsection{Other relations?}

By substantial computation,  Faber has verified 
Conjecture 1 holds for genus $g< 24$. Moreover, his calculations
show
the Faber-Zagier set yields {\em all} relations among $\kappa$
classes in $R^*(\MM_g)$ for $g< 24$.
However, he finds
 the Faber-Zagier relations of
Theorem \ref{dddd} do {\em not} yield a Gorenstein quotient in genus 24. 
Let $$\mathsf{FZ}_g \subset \mathbb{Q}[\kappa_1, \kappa_2, \kappa_3, \ldots]$$
be the ideal determined by the Faber-Zagier relations of Theorem \ref{dddd},
and let
$$R^*_{\mathrm{FZ}}(\MM_g) = \frac{\mathbb{Q}[\kappa_1, \kappa_2, \kappa_3, \ldots]}{\mathsf{FZ}_g}\ .$$
 Faber finds a mismatch in codimension 12,
\begin{equation}\label{ineq}
R^{12}_{\mathrm{FZ}}(\MM_{24}) \neq R^{12}_{\mathrm{G}}(\MM_{24})\ .
\end{equation}
Exactly 1 more relation holds in the Gorenstein
quotient.

To the best of our knowledge, a relation in $R^*(\MM_g)$ which is
not in the span of the Faber-Zagier
relations of Theorem \ref{dddd} has not yet been found.
The following prediction is consistent with all present calculations.

\begin{Conjecture} For all $g\geq 2$, the kernel of 
$$\mathbb{Q}[\kappa_1,\kappa_2, \kappa_3, \ldots] 
\stackrel{q}{\longrightarrow} R^*(\MM_g) \longrightarrow 0$$
is the Faber-Zagier ideal $\mathsf{FZ}_g$.
\end{Conjecture}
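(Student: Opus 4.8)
By Theorem \ref{dddd} the inclusion $\mathsf{FZ}_g \subseteq \ker q$ already holds, so $q$ factors as
\[
\mathbb{Q}[\kappa_1,\kappa_2,\kappa_3,\ldots] \longrightarrow R^*_{\mathrm{FZ}}(\MM_g) \xrightarrow{\ \bar q\ } R^*(\MM_g) \longrightarrow 0,
\]
and the conjecture is exactly the statement that $\bar q$ is an isomorphism. Since $\bar q$ is surjective we have $\dim_{\mathbb{Q}} R^r_{\mathrm{FZ}}(\MM_g)\ge \dim_{\mathbb{Q}} R^r(\MM_g)$ in each codimension $r$, so the entire content of the conjecture is the reverse bound $\dim_{\mathbb{Q}} R^r(\MM_g)\ge \dim_{\mathbb{Q}} R^r_{\mathrm{FZ}}(\MM_g)$. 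Concretely, one must certify that every family of $\kappa$-monomials which is linearly independent in $R^*_{\mathrm{FZ}}(\MM_g)$ remains linearly independent in $R^*(\MM_g)$; the task is a \emph{lower} bound, i.e.\ a supply of genuine non-vanishing results on $\MM_g$. The principal difficulty, visible already in the excerpt, is that the only canonical non-degenerate evaluation available is the socle pairing $\epsilon$ through $\lambda_g\lambda_{g-1}$, and by construction $\epsilon$ computes precisely the Gorenstein quotient $R^*_{\mathrm{G}}(\MM_g)$, which the genus $24$ mismatch \eqref{ineq} shows can be strictly smaller than $R^*_{\mathrm{FZ}}(\MM_g)$. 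Hence $\epsilon$ alone can never force $\bar q$ to be injective.

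The plan is therefore to manufacture non-vanishing functionals strictly finer than $\epsilon$. First I would enlarge the test set from $\int_{\overline{M}_g}\kappa_\sigma\lambda_g\lambda_{g-1}$ to the full hierarchy of Hodge and descendent integrals $\int_{\overline{\mathcal{M}}_{g,n}} \psi_1^{\alpha_1}\cdots\psi_n^{\alpha_n}\,\lambda_{j_1}\cdots\lambda_{j_k}$, together with boundary push-forwards, restricting each to a linear functional on $R^*(\MM_g)$ and expressing its pairing with a $\kappa$-monomial combinatorially in the spirit of \eqref{lamgg}--\eqref{ffbb}. The goal is to prove that the rank of this enlarged evaluation matrix equals $\dim_{\mathbb{Q}} R^r_{\mathrm{FZ}}(\MM_g)$ in every bidegree $(g,r)$. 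Ideally this rank statement is extracted from the same stable-quotient virtual geometry on $\mathbf{P}^1$ that produces the relations: one would read off not only the kernel (the series $\Psi$ and the constants $C^{\text{\tiny{\sf FZ}}}_r(\sigma)$) but also an explicit $\mathbb{Q}$-basis of the quotient together with a matching non-vanishing pairing, so that a dimension count closes the argument.

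A complementary route runs through the moduli of stable curves. The restriction $R^*(\overline{\mathcal{M}}_g)\to R^*(\MM_g)$ is surjective with kernel generated by boundary classes, and Pixton's relations \cite{Pix} give a conjectural set of tautological relations on $\overline{\mathcal{M}}_{g,n}$ extending the Faber--Zagier relations. If one can establish that the tautological ring of $\overline{\mathcal{M}}_{g,n}$ is exactly the quotient of the free algebra by Pixton's relations, then excising the boundary and tracking the generators would transfer the statement to the open stratum and yield $\ker q=\mathsf{FZ}_g$. The advantage of the boundary setting is the far richer supply of geometric relations and of non-vanishing integrals (via gluing and forgetting marked points) with which to pin down dimensions.

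The hard part will be the lower bound itself, and the genus $24$ discrepancy makes the obstacle precise. There $R^*_{\mathrm{FZ}}(\MM_{24})$ carries one fewer relation in codimension $12$ than $R^*_{\mathrm{G}}(\MM_{24})$, so a proof of the conjecture must show that the extra ``Gorenstein'' relation is \emph{not} a relation in $R^*(\MM_{24})$: the corresponding class must be demonstrably nonzero in $R^{12}(\MM_{24})$ while pairing to zero against all of $R^{10}(\MM_{24})$ under $\epsilon$. Exhibiting such a class forces a non-vanishing invariant that sees beyond the $\lambda_g\lambda_{g-1}$ socle, and simultaneously disproves Faber's Gorenstein conjecture in genus $24$. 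No such invariant is presently known, and I expect constructing one --- or, alternatively, giving an a priori classification of all tautological relations that matches $\mathsf{FZ}_g$ without passing through any pairing --- to be the crux on which the whole program turns.
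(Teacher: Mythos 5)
You were asked to prove what the paper itself states only as Conjecture~2: the paper contains \emph{no} proof of this statement, and it remains open. What the paper proves (Theorem~\ref{dddd}) is exactly the inclusion $\mathsf{FZ}_g \subseteq \ker q$; for the reverse inclusion the paper records only evidence --- Faber's verification for $g<24$ --- together with the observation that, by the mismatch \eqref{ineq}, Conjectures~1 and~2 cannot both hold for all $g$. Your structural analysis reproduces this discussion accurately: the content of the conjecture is the lower bound $\dim_{\mathbb{Q}} R^r(\MM_g) \ge \dim_{\mathbb{Q}} R^r_{\mathrm{FZ}}(\MM_g)$; the evaluation $\epsilon$ through $\lambda_g\lambda_{g-1}$ by construction only detects the Gorenstein quotient $R^*_{\mathrm{G}}(\MM_g)$, so it can never separate $R^{12}_{\mathrm{FZ}}(\MM_{24})$ from $R^{12}_{\mathrm{G}}(\MM_{24})$; and a proof of Conjecture~2 would force the extra Gorenstein relation to fail in $R^{12}(\MM_{24})$, disproving Conjecture~1 there --- all of which matches the paper's own remarks.

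As a proof, however, your text establishes nothing beyond what Theorem~\ref{dddd} already gives, and you concede this yourself. The two proposed routes each rest on an unproven pillar: the first requires showing that the rank of the enlarged matrix of Hodge/descendent evaluations equals $\dim_{\mathbb{Q}} R^r_{\mathrm{FZ}}(\MM_g)$ in every bidegree, which is precisely the missing non-vanishing technology (no functional finer than $\epsilon$ is constructed, and none is known); the second assumes that the tautological ring of $\overline{\MM}_{g,n}$ is cut out exactly by the relations of \cite{Pix}, which is itself an open conjecture, so the argument would be circular as a foundation. So the honest verdict is: your reduction and obstruction analysis are correct and well aligned with the paper, but the proposal is a research program, not a proof --- appropriately so, since the statement is presented in the paper as a conjecture with no proof to compare against.
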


\noindent 
Conjectures 1 and 2 are both true for $g<24$.
By the inequality \eqref{ineq}, Conjectures 1 and 2 can {\em not}
both be true for all $g$. Which is false? 

Finally, we note the above discussion might have a different
outcome if the tautological ring $RH^*(\MM_g)$ in
cohomology is considered instead. Perhaps there are more
relations in cohomology? These questions provide
a very interesting line of inquiry.


\subsection{Plan of the paper}

We start the paper in Section \ref{FFF} with a modern treatment of Faber's 
classical
construction of relations among the
$\kappa$ classes. The result, in Wick form, is stated
as Theorem \ref{ttt} of Section \ref{wickf}.
While the outcome is an effective source of
relations, their complexity has so far defied a complete 
analysis.

After reviewing stable quotients on curves in
Section \ref{stq},
we derive an explicit set of $\kappa$ relations
from the virtual geometry of the moduli space of stable quotients
in Section \ref{MOP}.
The resulting equations are more tractable than those
obtained by classical methods.
In a series of steps, the stable quotient
relations are transformed to simpler and simpler forms.
The first step, Theorem  \ref{mmnn}, comes almost immediately from
the virtual localization formula \cite{GrP} applied to 
the moduli space of stable quotients.
After further analysis in Section \ref{LLL}, the simpler form of 
Proposition \ref{better} 
is found.  A change of variables is applied in Section \ref{trans}
that transforms the relations to Proposition \ref{best}.
Our final result, Theorem \ref{dddd}, establishes
the previously conjectural set of tautological
relations proposed in 2000  by  Faber and  Zagier. 
The proof of Theorem \ref{dddd} is completed in Section \ref{pppp}.

\subsection{History and subsequent results}

We first presented our proof of the Faber-Zagier relations in 
a series of lectures at Humboldt University
in Berlin during the conference {\em Intersection theory on moduli space}
in 2010. We thank G. Farkas for the invitation to
speak there.
A detailed set of lecture notes, which is
the origin of
the current paper,
is available \cite{PP}. The current paper itself was written in 2012.

Ten years after
our lectures in Berlin, relations among the $\kappa$ classes which are
not in the Faber-Zagier set have  not  been found despite serious efforts.
Conjecture 2 stating that
{\em the Faber-Zagier set spans all the relations in $R^*(M_g)$}
appears now to be more widely believed.
In particular, since the Faber-Zagier relations fail to define
a Gorenstein ring for $g=24$, Conjecture 1 is
expected to be false.

A natural question is whether Theorem \ref{dddd} can be extended
to yield explicit relations in the tautological ring of
the moduli spaces  $\overline{\MM}_{g,n}$ of stable curves.
A precise conjecture of exactly such an extension was given by Pixton
\cite{Pix} in 2012 which provides
explicit boundary expressions for \eqref{bbbbb}. In fact,
 Pixton \cite{Pix} has conjectured
a complete set $\mathsf{P}$ of relations in the Chow
ring $A^*(\overline{M}_{g,n})$ among all tautological classes on the moduli spaces of stable curves.
These include not only  the $\kappa$ classes
from the interior, but also the cotangent classes
of the $n$ markings {\em and} the boundary classes. In 2015,
Pixton's conjectured
relations were proven in \cite{PPZ}
to hold in $H^*(\overline{M}_{g,n})$ using  Witten's virtual
fundamental class of the moduli 
of $3$-spin curves and the $R$-matrix of the  Givental-Teleman classification
of CohFTs. In 2017, Janda  found a second approach \cite{Jan}
using the virtual fundamental class of the space of stable
maps to $\mathbb{P}^1$ which proves the set
$\mathsf{P}$ holds in Chow. For an introduction to these developments,
we refer the reader to \cite{Calc, Coh}.

The papers  \cite{Jan,PPZ}
also provide new proofs of the Faber-Zagier relations on $\MM_g$.
The proof of the Faber-Zagier relations presented here is geometrically
simpler (and avoids the Givental-Teleman classifications of CohFTs), but
is combinatorially more complicated than the later methods.

Pixton's relations capture all the previously known special relations
in the moduli spaces: WDVV in genus 0, Getzler's relation \cite{Get} in genus 1, the
 Belorousski-Pandharipande relation  \cite{BP} in genus 2, and so on. 
In fact,  no relation outside the set $\mathsf{P}$ has ever been found.
Whether $\mathsf{P}$ is the complete set of relations
among tautological classes is now the central open question
in the study the tautological ring.

In the past decade, the Faber-Zagier relations have been
connected to several different approaches to the geometry
of the moduli space of curves \cite{BJP, CY,Jan, Jan2, PPZ, PPZ2, RW,Y}. However, the classical
approach to tautological relations on $\MM_g$ given by Faber's original
construction has still not been proven to yield exactly the Faber-Zagier
set (a result strongly suggested by computer calculations).
The Wick formulation in Section 1 of Faber's construction remains
the most likely path to a complete analysis of the classical relations.

\subsection{Acknowledgements}
Discussions with C. Faber
played an important role in the development of our ideas. 
Much of the research reported here was done during visits of A.P.
to IST Lisbon during  the year 2010-11. 
R.P. was supported in Lisbon by a Marie Curie fellowship
and a grant from the Gulbenkian foundation.

We thank the {\em Forschungsinstitut f\"ur Mathematik} at ETHZ
for hosting several visit of A.P. in Z\"urich.
 R.P. was partially supported by 
  SNF-200021143274, SNF-200020182181, ERC-2017-786580-MACI,
 SwissMAP,
 and the Einstein Stiftung.
A.P. was supported by an NDSEG graduate fellowship and a
fellowship from the Clay Mathematics Institute.

The project has received funding from the European Reseach Council (ERC)
under the European Union Horizon 2020 Research and Innovation
Program (grant No. 786580).

\section{Classical vanishing relations}
\label{FFF}

\subsection{Construction}
Faber's original relations in his article
{\em Conjectural description of the tautological ring} \cite{Faber}
are obtained from a very simple geometric construction.
As before, let 
$$\pi: \mathcal{C}_g \rightarrow \mathcal{M}_g$$
be the universal curve over the moduli space,
and let 
$$\pi^d: \mathcal{C}^d_g \rightarrow \mathcal{M}_g $$
be the map associated to the $d^{th}$ 
fiber product of the universal curve.
For every point $[C,p_1,\ldots,p_d]\in\mathcal{C}^d_g$,
we have the restriction map
\begin{equation}\label{gy77}
H^0(C,\omega_C) \rightarrow H^0(C,\omega_C|_{p_1+\ldots+p_d})\ .
\end{equation}
Since the canonical bundle $\omega_C$ has degree $2g-2$,
the restriction map is injective if $d>2g-2$.
Let $$\Omega_d \rightarrow \mathcal{C}_g^d$$ be the
rank $d$ bundle with fiber
$H^0(C,\omega_C|_{p_1+\ldots+p_d})$ over the moduli point
$[C,p_1,\ldots,p_d]\in\mathcal{C}^d_g$.
If $d>2g-2$,
the restriction map \eqref{gy77} yields 
an exact sequence over $\mathcal{C}^d$, 
$$ 0 \rightarrow \mathbb{E} \rightarrow \Omega_{d}
\rightarrow Q_{d-g} \rightarrow 0$$
where $\mathbb{E}$ is the rank $g$ Hodge bundle and
$Q_{d-g}$ is the quotient
bundle of rank $d-g$.
We see
$$c_k(Q_{d-g}) = 0\in A^k(\mathcal{C}^d_g)\ \ \ \text{for} \ \ \ k>d-g \ .$$
After cutting the vanishing Chern classes $c_k(Q_{d-g})$ down with
cotangent line and diagonal classes
in $\mathcal{C}^d_g$ and 
pushing-forward via $\pi^d_*$ to $\MM_g$,
we arrive at Faber's relations in $R^*(\MM_g)$.

\subsection{Wick form} \label{wickf}
From our point of view,
at the center of 
Faber's relations in
\cite{Faber}
is the function
$$\Theta(t,x) = \sum_{d=0}^\infty \prod_{i=1}^d {(1+it)} \ 
\frac {(-1)^d}{d!} \frac{x^d}{t^{d}} \ .$$
The differential equation
$$ t(x+1)\frac{d}{dx} \Theta + (t+1) \Theta  = 0 \  $$
is easily found.
Hence, we obtain the following result.

\begin{Lemma}
$\Theta = (1+x)^{-\frac{t+1}{t}}\ .$
\end{Lemma}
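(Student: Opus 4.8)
The plan is to treat the displayed first-order linear ODE
$$ t(x+1)\frac{d}{dx}\Theta + (t+1)\Theta = 0 $$
as the defining relation and to solve it in the ring of formal power series in $x$ whose coefficients are rational functions of $t$. First I would record the coefficient recursion the ODE imposes. Writing $\Theta = \sum_{d\ge 0} a_d x^d$ and comparing coefficients of $x^d$ on both sides yields
$$ a_{d+1} = -\frac{t(d+1)+1}{t(d+1)}\, a_d, $$
so that every solution is determined by its constant term $a_0$; in particular the space of solutions is one-dimensional over $\mathbb{Q}(t)$. This reduces the lemma to identifying one explicit solution with the correct normalization.

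Next I would exhibit the claimed closed form as a solution. Interpreting $(1+x)^{-\frac{t+1}{t}}$ as the binomial series with exponent $\alpha = -\frac{t+1}{t} = -1 - \frac{1}{t}\in\mathbb{Q}(t)$, the elementary identity $(1+x)\frac{d}{dx}(1+x)^{\alpha} = \alpha\,(1+x)^{\alpha}$ shows immediately that $(1+x)^{\alpha}$ satisfies $t(1+x)\frac{d}{dx}(1+x)^{\alpha} + (t+1)(1+x)^{\alpha} = 0$, which is exactly our ODE, and its constant term equals $1$. On the other hand, the $d=0$ summand in the definition of $\Theta$ is the empty product, so $\Theta$ also has constant term $1$. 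Since both $\Theta$ and $(1+x)^{\alpha}$ solve the same first-order ODE and agree at $x=0$, the one-dimensionality established above forces $\Theta = (1+x)^{-\frac{t+1}{t}}$.

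There is essentially no serious obstacle here beyond bookkeeping; the only point demanding care is the formal framework, since the exponent $-\frac{t+1}{t}$ lies in $\mathbb{Q}(t)$ rather than $\mathbb{Q}$, so $(1+x)^{\alpha}$ must be read as a formal binomial expansion whose coefficients $\binom{\alpha}{d}$ are rational (indeed Laurent) in $t$, matching the $t^{-d}$ appearing in $\Theta$. As a consistency check one can verify directly that the explicit coefficients $a_d = \frac{(-1)^d}{d!\,t^d}\prod_{i=1}^{d}(1+it)$ of $\Theta$ satisfy $a_{d+1}/a_d = -\frac{1+(d+1)t}{(d+1)t}$, in agreement with the recursion above; this even allows one to bypass the closed form entirely and read off the lemma straight from the recursion if preferred.
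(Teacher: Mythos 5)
Your proof is correct and follows essentially the same route as the paper: the paper derives the first-order ODE $t(x+1)\frac{d}{dx}\Theta + (t+1)\Theta = 0$ from the series and concludes the closed form, which is exactly your argument of matching the binomial series solution against $\Theta$ via the coefficient recursion and the common constant term $1$. You simply make explicit the uniqueness step (and the formal-series framework over $\mathbb{Q}(t)$) that the paper leaves implicit.
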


We introduce a variable set $\mathbf{z}$ indexed
by pairs of integers
$$\mathbf{z} = \{ \ {z}_{i,j} \ | \ i \geq 1, \ \ j\geq i-1 \  \} \ .$$
For monomials
$$\mathbf{z}^\sigma = \prod_{i,j} z_{i,j}^{\sigma_{i,j}},$$
 we define
$$\ell(\sigma) = \sum_{i,j} i \sigma_{i,j}, \ \ \
|\sigma| = \sum_{i,j} j \sigma_{i,j} \ .$$
Of course $|\text{Aut}(\sigma)| = \prod_{i,j} \sigma_{i,j} !$ \ .

The variables $\mathbf{z}$ are used to define
a differential operator
$$ \mathcal{D} = \sum_{i,j}  z_{i,j}\  t^j  \left( 
x\frac{d}{dx}\right) ^i\ .$$
After applying $\exp(\mathcal{D})$ to $\Theta$, we obtain
\begin{eqnarray*}
\Theta^{\mathcal{D}} & = &  \exp(\mathcal{D})\  \Theta \\
& = & 
\sum_\sigma \sum_{d=0}^\infty  
\prod_{i=1}^d {(1+it)} \ 
\frac {(-1)^d}{d!}  \frac{x^d}{t^{d}}\  
\frac{d^{\ell(\sigma)} t^{|\sigma|}
{\mathbf{z}}^{\sigma}}
{|\text{Aut}(\sigma)|}
\end{eqnarray*}
where $\sigma$ runs over all monomials in the
variables $\mathbf{z}$. Define
constants $C^d_r(\sigma)$ by the formula
$$\log(\Theta^{\mathcal{D}})= 
\sum_{\sigma}
\sum_{d=1}^\infty \sum_{r=-1}^\infty C^d_{r}(\sigma)\ t^r \frac{x^d}{d!}
\mathbf{z}^\sigma
\ .$$
By an elementary application of Wick's formula (as explained
in Section \ref{cc12} below), the
$t$ dependence of $\log(\Theta^{\mathcal{D}})$
has at most simple poles.

Finally, we consider the following function,
\begin{equation}
\gamma^{\text{\tiny{{\sf F}}}}=  \sum_{i\geq 1} \frac{B_{2i}}{2i(2i-1)} \kappa_{2i-1} t^{2i-1}
+ 
\sum_{\sigma}
\sum_{d=1}^\infty \sum_{r=-1}^\infty C^d_r(\sigma)
\ \kappa_r t^r \frac{x^d}{d!}
\mathbf{z}^\sigma
\ . \label{fafa}
\end{equation}
The Bernoulli numbers appear in the first term,
$$\sum_{k=0}^ \infty B_k \frac{u^k}{k!}= \frac{u}{e^u-1}\ .$$
Denote the $t^rx^d \mathbf{z}^\sigma$ coefficient of $\exp(-\gamma^{\text{\tiny{{\sf F}}}})$ by
$$\big[ \exp(-\gamma^{\text{\tiny{{\sf F}}}}) \big]_{t^rx^d \mathbf{z}^\sigma} 
\in \mathbb{Q}[\kappa_{-1},
\kappa_0,\kappa_1,
\kappa_2, \ldots] \ .$$
Our form of Faber's equations is the following result.

\begin{Theorem}\label{ttt}
In $R^r(\MM_g)$, the relation
$$
\big[ \exp(-\gamma^{\text{\tiny{{\sf F}}}}) \big]_{t^rx^d \mathbf{z}^\sigma}  = 0$$
holds when $r>-g+|\sigma|$ and $d>2g-2$.
\end{Theorem}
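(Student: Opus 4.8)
The plan is to show that, up to a nonzero rational multiple, each coefficient $[\exp(-\gamma^{\text{\tiny{{\sf F}}}})]_{t^rx^d\mathbf{z}^\sigma}$ equals the pushforward $\pi^d_*$ to $\MM_g$ of a single Chern class of $Q_{d-g}$ cut by an explicit universal combination of cotangent-line and diagonal classes on $\mathcal{C}_g^d$ indexed by $\sigma$, and then to read off the vanishing from the geometry. Since $c(Q_{d-g})=c(\Omega_d)\,c(\mathbb{E})^{-1}$ and $\mathbb{E}$ is pulled back from $\MM_g$, the projection formula factors $c(\mathbb{E})^{-1}$ out of the pushforward. Tracking codimensions, a class of codimension $r+d$ on $\mathcal{C}_g^d$ pushes to codimension $r$ on $\MM_g$; as the cut carries codimension $|\sigma|$, the relevant piece of $c(Q_{d-g})$ sits in codimension $r+d-|\sigma|$. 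The geometric vanishing $c_k(Q_{d-g})=0$ for $k>d-g$, valid once $d>2g-2$, then kills the coefficient exactly when $r+d-|\sigma|>d-g$, i.e. when $r>-g+|\sigma|$, which is the asserted range; note that $d$ drops out of this inequality, as in the statement.

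The heart of the argument is to match the generating series with this geometry. First I would compute $c(\Omega_d)$ on $\mathcal{C}_g^d$: away from the diagonals $\Omega_d$ is the sum of the $d$ cotangent lines, while along the loci where marked points collide the length-$d$ restriction $\omega_\pi|_{p_1+\cdots+p_d}$ acquires correction terms supported on those diagonals. Organizing these corrections, together with the closed form $\Theta=(1+x)^{-\frac{t+1}{t}}$ of Lemma 1, produces precisely the factor $\prod_{i=1}^d(1+it)$ in $\Theta$, with $t$ recording codimension and $x$ recording the number of points through $x^d/d!$. The operation of cutting the vanishing class by cotangent-line powers and diagonal classes is then encoded by $\mathcal{D}=\sum_{i,j}z_{i,j}\,t^j(x\tfrac{d}{dx})^i$: the factor $(x\tfrac{d}{dx})^i$ selects $i$ of the points and merges them along the corresponding diagonal, $t^j$ records the cotangent weight carried there, and applying $\exp(\mathcal{D})$ to $\Theta$ inserts all such cuts at once, with $\mathbf{z}^\sigma$ bookkeeping the combinatorial type $\sigma$.

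Next I would carry out $\pi^d_*$, which sends a monomial $\psi_1^{a_1}\cdots\psi_d^{a_d}$ in distinct cotangent lines to $\kappa_{a_1-1}\cdots\kappa_{a_d-1}$ and, along a diagonal merging $i$ points of total cotangent weight $j$, to the single class $\kappa_{j-1}$. Because the contributions of distinct point-clusters multiply, the exponential generating series in $x$ factors as the exponential of its single-cluster part; this is exactly why passing to $\log(\Theta^{\mathcal{D}})$ isolates the connected contributions, whose coefficients are the constants $C^d_r(\sigma)$, and why the simple-pole bound from Wick's formula matches the appearance of $\kappa_{-1}$ at $t^{-1}$. Finally, Mumford's formula on $\MM_g$ gives $\log c(\mathbb{E})=\sum_{i\geq1}\tfrac{B_{2i}}{2i(2i-1)}\kappa_{2i-1}t^{2i-1}$, so $c(\mathbb{E})^{-1}$ is reproduced by the Bernoulli term of $\gamma^{\text{\tiny{{\sf F}}}}$ with a minus sign. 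Assembling the two pieces identifies $\exp(-\gamma^{\text{\tiny{{\sf F}}}})$ with the pushforward of $c(\Omega_d)\,c(\mathbb{E})^{-1}=c(Q_{d-g})$ cut by the $\sigma$-classes, and the theorem follows from the first step.

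The main obstacle is the precise diagonal bookkeeping in the two middle steps: computing the correction terms of $\omega_\pi|_{p_1+\cdots+p_d}$ along all collision loci of $\mathcal{C}_g^d$ and verifying that, after cutting and pushing forward, they assemble into exactly $\Theta$ and $\mathcal{D}$ rather than some perturbation. Getting the combinatorial weights, namely the $d^{\ell(\sigma)}/|\Aut(\sigma)|$ factors, the signs, and the exact $t$-powers, to line up so that the connected pieces reproduce $C^d_r(\sigma)$ with the stated simple-pole behavior is where the real work lies; the invocation of the geometric vanishing and of Mumford's formula is comparatively formal once this dictionary is in place.
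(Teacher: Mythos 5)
Your outline reproduces the paper's own proof almost step for step: the exact sequence $0 \to \mathbb{E} \to \Omega_d \to Q_{d-g} \to 0$ for $d>2g-2$, the Chern roots $c_t(\Omega_d)=\prod_{i=1}^d\big(1+(\psi_i-\Delta_i)t\big)$, the self-intersection formulas and the Wick (connected $=$ logarithm) bookkeeping under $\pi^d_*$, Mumford's formula supplying the Bernoulli term, and the codimension count $r+d-|\sigma|>d-g \Leftrightarrow r>-g+|\sigma|$. So the strategy is the right one, and it is the paper's strategy.

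The one genuine gap is the dictionary between the variables $z_{i,j}$ and actual cutting classes, which you describe by saying that $(x\frac{d}{dx})^i$ ``selects $i$ of the points and merges them along the corresponding diagonal.'' Taken literally, the class that merges clusters of $b$ points carrying cotangent weight $a$ is $\phi[a,b]=(-1)^{b-1}\sum_{|I|=b}\psi_I^a D_I$, which has $\binom{d}{b}$ terms; in the Wick formalism it therefore acts on the coefficient of $x^d$ as multiplication by $\binom{d}{b}$, not by a pure power $d^i$, so cutting with such classes does not produce $\exp(\mathcal{D})\Theta$ with $\mathcal{D}=\sum_{i,j}z_{i,j}t^j(x\frac{d}{dx})^i$, and hence does not reproduce the constants $C^d_r(\sigma)$ that define $\gamma^{\mathsf{F}}$. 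You would prove \emph{a} relation, but not the relation as stated. The paper closes exactly this gap by engineering the cutting classes: since the term counts $\binom{d}{b}$ for $1\le b\le j+1$ are polynomials in $d$ of degree $b$ with no constant term, for each $i$ with $1\le i\le j+1$ there is a unique linear combination $\Phi[i,j]=\sum_{a+b-1=j}\lambda_{a,b}\,\phi[a,b]$ of pure degree $j$ whose evaluation at $\psi_I=1$, $-D_{ij}=-1$ is exactly $d^i$; this is also precisely why the variable set $\mathbf{z}$ is indexed by pairs with $j\ge i-1$. With $\Phi[i,j]$ substituted for your literal ``merge $i$ points'' classes, your assembly of $\exp(-\gamma^{\mathsf{F}})$ as the pushforward generating function, and the concluding vanishing, go through exactly as you describe.
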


In the tautological ring $R^*(\MM_g)$, the standard conventions
$$\kappa_{-1}=0, \ \ \ \ \kappa_{0}=2g-2$$ are
 followed. 
For fixed $g$ and $r$, Theorem \ref{ttt}
provides infinitely many relations by increasing $d$.
The variables $z_{i,j}$ efficiently encode both
the cotangent and diagonal operations studied
in \cite{Faber}. In particular, the relations of Theorem \ref{ttt}
 are equivalent to a 
mixing of all cotangent and diagonal operations
studied there. The proof of Theorem \ref{ttt} is presented
in Section \ref{ppp}.

\vspace{10pt}
While Theorem \ref{ttt} has an appealingly simple geometric origin, 
the relations
do not seem to fit the other forms we will see later.
In particular, we do not know how to derive Theorem \ref{dddd}
from Theorem \ref{ttt}. Extensive computer calculations by Faber
suggest the following.

\begin{Conjecture}
For all $g\geq 2$, the relations of Theorem \ref{ttt} are
equivalent to the Faber-Zagier relations.
\end{Conjecture}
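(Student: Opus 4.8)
The statement is an equality of ideals in $\mathbb{Q}[\kappa_1,\kappa_2,\kappa_3,\ldots]$: writing $\mathsf{F}_g$ for the ideal generated by the relations of Theorem~\ref{ttt} and $\mathsf{FZ}_g$ for the Faber--Zagier ideal, the plan is to establish the two inclusions $\mathsf{F}_g \subseteq \mathsf{FZ}_g$ and $\mathsf{FZ}_g \subseteq \mathsf{F}_g$ separately. Since both families of relations are already known to hold in $R^*(\MM_g)$ (by Theorems~\ref{ttt} and~\ref{dddd}), the content is entirely formal: what must be proved is an identity between two explicitly presented ideals in a polynomial ring, with no further geometric input required and with the conventions $\kappa_{-1}=0$, $\kappa_0=2g-2$ used to land both families in $\mathbb{Q}[\kappa_1,\kappa_2,\ldots]$. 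For small $g$ the equivalence has been checked computationally by Faber, so a sensible first step is to isolate exactly what remains for large $g$ and to reformulate the problem as a uniform statement about the two generating series alone.

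The most promising route is to reduce each family to a common canonical form. The paper already transforms the stable--quotient relations that yield Theorem~\ref{dddd} through the successively simpler shapes of Proposition~\ref{better} and Proposition~\ref{best} by an explicit change of variables. The plan is to attempt the analogous linearization of the Wick--form relations of Theorem~\ref{ttt}: starting from $\gamma^{\text{\tiny{{\sf F}}}}$, built from $\Theta=(1+x)^{-\frac{t+1}{t}}$ and the operator $\exp(\mathcal{D})$, I would search for a substitution in the auxiliary variables $x$ and $\mathbf{z}$ that collapses Faber's relations to the same normal form reached in Proposition~\ref{best}. If Faber's relations can be massaged into that shape, matching the two canonical presentations term by term would then deliver both inclusions at once, turning the whole question into a single explicit power-series identity.

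The hard part --- and the reason the authors record the statement only as a conjecture --- is precisely the construction of this bridge. The two generating series have genuinely different analytic origins: $\gamma^{\text{\tiny{{\sf FZ}}}}$ is built from the hypergeometric series $\sum (6i)!/((3i)!(2i)!)\,t^i$ and its $\tfrac{6i+1}{6i-1}$-twisted companion, whereas $\gamma^{\text{\tiny{{\sf F}}}}$ descends from the elementary factor $\prod_{i=1}^d(1+it)$, and the two carry incompatible bookkeeping --- the pair $(x,\mathbf{z})$ with the constraint $d>2g-2$ against the single partition variable $\mathbf{p}$ with the inequality $g-1+|\sigma|<3r$ and the parity condition. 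The authors explicitly note that no derivation of Theorem~\ref{dddd} from Theorem~\ref{ttt} is known, so the decisive obstacle is to find the combinatorial identity, equivalently the change of variables, that reconciles these two hypergeometric structures. A fallback of proving equivalence indirectly, by showing both ideals coincide with the full relation ideal, is blocked in general: such an argument would rest on Faber's Gorenstein conjecture, which fails in genus~$24$, so it cannot yield the uniform statement for all $g$.
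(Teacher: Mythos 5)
The statement you are addressing is Conjecture 3 of the paper, not a theorem: the authors explicitly write that they ``do not know how to derive Theorem \ref{dddd} from Theorem \ref{ttt},'' and the only evidence offered for the equivalence is Faber's extensive computer calculation. So there is no proof in the paper to compare against --- and your proposal does not supply one either. What you have written is a research plan whose decisive step, namely the change of variables in $x$ and $\mathbf{z}$ that would collapse $\gamma^{\text{\tiny{{\sf F}}}}$ (built from $\prod_{i=1}^d(1+it)$) to the normal form of Proposition \ref{best} (built from the hypergeometric series $\sum_i \frac{(6i)!}{(3i)!(2i)!}t^i$), is exactly the open problem; you concede as much in your final paragraph. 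A proof attempt that ends by acknowledging that the key identity is unknown has a gap by definition, and nothing in your first two paragraphs reduces the conjecture to anything weaker than itself: the reduction to two ideal inclusions and the remark that the question is purely formal in $\mathbb{Q}[\kappa_1,\kappa_2,\ldots]$ are correct but are already how the conjecture is posed. Note also that both families holding in $R^*(\MM_g)$ gives neither inclusion --- two sets of valid relations can generate different ideals, which is the whole point.

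One further correction: your claim that the indirect route ``would rest on Faber's Gorenstein conjecture, which fails in genus $24$'' misstates what is known. The inequality \eqref{ineq} shows only that $R^{12}_{\mathrm{FZ}}(\MM_{24}) \neq R^{12}_{\mathrm{G}}(\MM_{24})$, i.e., that Conjectures 1 and 2 cannot both hold; it is \emph{not} known which one fails. The indirect strategy (showing both ideals coincide with the full kernel of $q$) is blocked not by a known failure of the Gorenstein conjecture but because identifying either ideal with that kernel is itself open (this is Conjecture 2, and the paper records that the missing genus-$24$ relation has not been produced from Theorem \ref{ttt} despite significant effort). Your proposal correctly identifies the shape of the problem and the precise obstacle, but it should be recorded as a strategy note, not as a proof.
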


In particular, despite significant effort, the relation in
$R_{\mathrm{G}}^{12}(\MM_{24})$ which is missing in $R_{\mathsf{FZ}}^{12}(\MM_{24})$
has {\em not} been found via Theorem \ref{ttt}. Other geometric
strategies have so far also failed to find 
the missing relation \cite{RW,Y}.

\subsection{Proof of Theorem \ref{ttt}}
\label{ppp}

\subsubsection{The Chern roots of $\Omega_d$}

Let $\psi_i \in A^1(\mathcal{C}^d_g, \mathbb{Q})$
be the first Chern class of the
relative dualizing sheaf $\omega_\pi$ pulled back from the $i^{th}$ factor,
$$\mathcal{C}^d_g \rightarrow \mathcal{C}_g\ .$$
For $i\neq j$, let $D_{ij} \in A^1(\mathcal{C}^d_g, \mathbb{Q})$ be the 
class of the diagonal $\mathcal{C}_g \subset \mathcal{C}^2_g$
pulled-back from the product of the
$i^{th}$ and $j^{th}$ factors,
$$\mathcal{C}^d_g \rightarrow \mathcal{C}^2_g\ .$$
Finally, let
$$\Delta_i = D_{1,i} + \ldots + D_{i-1,i}\ \in A^1(\mathcal{C}_g^d,\mathbb{Q})\ $$
following the convention $\Delta_1=0$.
The Chern roots of $\Omega_d$,
\begin{eqnarray}
c_t(\Omega_d) & = & \prod_{i=1}^d 1+(\psi_i - \Delta_i)t \label{fredd}
\\
& = & \nonumber
(1+\psi_1t) \cdot \big(1+(\psi_2-D_{12})t\big) \cdots
 \left(1+\Big(\psi_d -\sum_{i=1}^{d-1}D_{id}\Big)t\right)
\end{eqnarray}
are obtained by a simple induction, see \cite{Faber}.

We may expand the right side of \eqref{fredd} fully. 
The resulting expression is a  polynomial in the 
$d+ \binom{d}{2}$ variables. 
$${\psi}_1,\ldots, {\psi}_d, -D_{12},-D_{13}, 
\ldots,- D_{d-1,d}\ .$$ 
The
sign on the diagonal variables is chosen
because of the self-intersection formula
$$(-D_{ij})^2= \psi_i(-D_{ij})=\psi_j(-D_{ij})\ . $$
Let $M_r^d$ denote the coefficient in degree $r$,
$$c_t( \Omega_d) =\sum_{r=0}^\infty
M_r^d({\psi}_i,- D_{ij}) \ t^r.$$

\begin{Lemma} \label{gcd2} After setting all the variables to 1,
$$\sum_{r=0}^\infty M_r^d({\psi}_i=1,-D_{ij}=1) \ t^r
\ = \ \prod_{i=1}^d (1+it).$$
\end{Lemma}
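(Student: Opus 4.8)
The plan is to exploit the inductive product structure \eqref{fredd} directly. Setting all $\psi_i=1$ and all $-D_{ij}=1$ in the Chern polynomial $c_t(\Omega_d)=\prod_{i=1}^d \bigl(1+(\psi_i-\Delta_i)t\bigr)$ is subtle, because $\Delta_i=D_{1,i}+\ldots+D_{i-1,i}$ involves the diagonal variables that carry a sign, and the self-intersection relations $(-D_{ij})^2=\psi_i(-D_{ij})=\psi_j(-D_{ij})$ mean that the substitution must be performed on the \emph{fully expanded monomial} polynomial $M_r^d$, not naively on the factored form. So the first thing I would do is pin down exactly what "setting all variables to $1$'' means: each expanded monomial in the $\psi_i$ and the $-D_{ij}$ is sent to $1$, and the claim is that the generating function of these evaluated coefficients telescopes to $\prod_{i=1}^d(1+it)$.

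The key step is an induction on $d$. Let me write $P_d(t)$ for the evaluation $\sum_r M_r^d(\psi_i=1,-D_{ij}=1)\,t^r$. The base case $d=1$ gives $c_t(\Omega_1)=1+\psi_1 t$, which evaluates to $1+t$, matching $\prod_{i=1}^1(1+it)$. For the inductive step I would analyze the effect of multiplying by the last factor $1+\bigl(\psi_d-\sum_{i=1}^{d-1}D_{id}\bigr)t$ on an already-expanded monomial coming from the first $d-1$ factors. The crucial arithmetic input is that when a new diagonal class $-D_{id}$ (for some $i<d$) multiplies an existing monomial, the self-intersection relations collapse any product of $\psi_d$ or another $-D_{jd}$ with it down to lower-degree $\psi$ and diagonal monomials that still evaluate to $1$. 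Concretely, the new factor contributes the symbol $\psi_d$ once and the $d-1$ diagonal symbols $-D_{1d},\ldots,-D_{d-1,d}$, and after the reductions the total multiplicative effect on the evaluation is to replace $P_{d-1}(t)$ by $\bigl(1+dt\bigr)P_{d-1}(t)$; here the coefficient $d$ counts the $d$ independent "slots'' ($\psi_d$ together with the $d-1$ diagonals $-D_{id}$) each of which contributes $t$ after the collapse. This yields $P_d(t)=(1+dt)P_{d-1}(t)$ and hence the claimed product.

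The main obstacle, and the step I would spend the most care on, is the bookkeeping of the self-intersection relations under full expansion. The point is that a monomial in $M_r^d$ may already contain factors of $\psi_d$ or $-D_{jd}$ when the last factor is multiplied in, so naively one would worry about cross terms like $(-D_{id})(-D_{jd})$ or $\psi_d(-D_{id})$ raising complications. I would argue that each such product is either killed or pushed to a monomial that nonetheless evaluates to $1$ under the substitution, so that the net effect on the \emph{evaluated} generating function is exactly multiplication by $(1+dt)$. The cleanest way to see this is to reduce the last factor modulo the self-intersection relations \emph{against the monomials already built from indices} $1,\ldots,d-1$ and observe that, after setting all symbols to $1$, the $d-1$ diagonal terms and the single $\psi_d$ term behave as $d$ formally independent degree-one contributions; this is precisely the structure of the recursion $\prod_{i=1}^{d}(1+it)$ and identifies the factor $(1+dt)$. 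Once this combinatorial reduction is established, the induction closes immediately and the lemma follows.
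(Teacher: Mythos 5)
Your recursion $P_d(t)=(1+dt)\,P_{d-1}(t)$ and the final answer are correct, but the justification you give for the inductive step --- and the ``subtlety'' you build the whole proof around --- rest on a misreading of what $M_r^d$ is. In the paper, $M_r^d$ is defined by expanding the product \eqref{fredd} \emph{formally}, in the free polynomial ring on the $d+\binom{d}{2}$ symbols $\psi_1,\ldots,\psi_d,-D_{12},\ldots,-D_{d-1,d}$; no Chow-ring relations are imposed at this stage (the self-intersection formula is quoted in the text only to motivate the sign convention on the diagonal variables). Consequently ``setting all variables to $1$'' is a ring homomorphism, it commutes with taking products, and evaluating the factored form directly is perfectly legitimate: the $i$-th factor contains $\psi_i$ together with the $i-1$ symbols $-D_{ji}$, $j<i$, so it evaluates to $1+it$, and the lemma follows in one line. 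This \emph{is} the paper's proof. Your opening claim that the substitution ``must be performed on the fully expanded monomial polynomial, not naively on the factored form'' is therefore exactly backwards.

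Moreover, the step you say you would spend the most care on addresses a problem that does not exist. Distinct factors of \eqref{fredd} involve pairwise disjoint sets of variables ($\psi_i$ and the $-D_{ji}$ with $j<i$ occur only in the $i$-th factor), and each factor is linear in $t$; hence every monomial in the full expansion is square-free and contains at most one variable from each factor. Cross terms such as $\psi_d(-D_{id})$ or $(-D_{id})(-D_{jd})$ never arise in $M_r^d$, so there is nothing for the self-intersection relations to ``collapse.'' Worse, your fallback argument is internally inconsistent: if some products really were ``killed'' by relations, the evaluation would change and the factor $(1+dt)$ would be wrong --- one cannot have terms disappear and still claim the count is unaffected. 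The self-intersection formulas genuinely enter only later, in the connected-monomial (Wick) analysis of Section \ref{cc12}, where push-forwards under $\pi^d_*$ are converted into $\kappa$ classes. If you delete the discussion of relations and justify the inductive step by the one true statement available --- evaluation at $\psi_i=1$, $-D_{ij}=1$ is multiplicative on formal polynomials --- your induction becomes correct, and it is then just the paper's one-line argument written out factor by factor.
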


\begin{proof}
The results follows immediately from the Chern roots \eqref{fredd}.
\end{proof}

Lemma \ref{gcd2} may be viewed  counting the number of
terms in the expansion of the total Chern class $c_t(\Omega_d)$.

\subsubsection{Connected counts}\label{cc12}
A monomial in the diagonal variables
\begin{equation} \label{gqq6}
D_{12},D_{13}, \ldots,D_{d-1,d}
\end{equation}
determines a set partition of $\{1, \ldots, d\}$
by the diagonal associations.
For example, the monomial $3D_{12}^2D_{1,3} D_{56}^3$ determines
the set partition
$$\{1,2,3\} \ \cup \ \{4\}\ \cup \ \{5,6\}$$
in the $d=6$ case.
A monomial in the variables \eqref{gqq6} is
{\em connected} if the corresponding
set partition consists of a single part with $d$ elements.

A monomial in the variables
\begin{equation}\label{gffvv}
{\psi}_1,\ldots, {\psi}_d, -D_{12},-D_{13}, 
\ldots, -D_{d-1,d}\ 
\end{equation} 
is connected if
the corresponding monomial in the diagonal variables
obtained by setting all ${\psi}_i=1$
is connected.
Let $S^d_r$ be the summand of the evaluation  $M^d_r({\psi}_i=1, -D_{ij}=1)$ 
consisting of the contributions of
only the connected monomials.

\begin{Lemma} \label{llgg}
We have
$$\sum_{d=1}^\infty \sum_{r=0}^d
S_r^d\  t^r \frac{x^d}{d!}  =
\log\left( 1+\sum_{d=1}^\infty \prod_{i=1}^d (1+it)
\frac{x^d}{d!}
\right)\ .
$$
\end{Lemma}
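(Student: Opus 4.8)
The plan is to prove Lemma~\ref{llgg} by the standard exponential formula relating connected and disconnected counts weighted by labeled structures. The key observation is that the quantity $M^d_r(\psi_i=1,-D_{ij}=1)$, by Lemma~\ref{gcd2}, equals $[t^r]\prod_{i=1}^d(1+it)$, and this total count is obtained by summing over \emph{all} set partitions of $\{1,\dots,d\}$ the product of the connected contributions $S^{|c|}_{\bullet}$ over the parts $c$ of the partition. Concretely, a monomial in the variables \eqref{gffvv} decomposes according to its associated set partition into a product of monomials supported on the individual parts, each of which is connected on its part. Thus I would first record the multiplicativity statement: for each set partition $\{1,\dots,d\}=c_1\sqcup\cdots\sqcup c_k$, the contribution factors as a product of connected contributions on the blocks.

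Next I would translate this combinatorial decomposition into generating functions. Writing $f(t,x)=1+\sum_{d\ge 1}\prod_{i=1}^d(1+it)\,\frac{x^d}{d!}$ for the full (disconnected, including empty) exponential generating function and $g(t,x)=\sum_{d\ge 1}\sum_{r=0}^d S^d_r\,t^r\frac{x^d}{d!}$ for the connected one, the exponential formula asserts precisely that $f=\exp(g)$, equivalently $g=\log(f)$, which is exactly the claimed identity. The content is that when one sums over a set partition, the number of ways to distribute the labels $\{1,\dots,d\}$ into unordered blocks of the given sizes produces the multinomial coefficients that are absorbed into the $\frac{x^d}{d!}$ weighting, yielding the product rule $[x^d]f=\sum \prod_{\text{blocks}}[x^{|c|}]g$ characteristic of the exponential of an exponential generating function.

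The step requiring the most care is verifying the multiplicativity of the connected contributions across blocks of a set partition, i.e. that a monomial in $\psi_1,\dots,\psi_d,-D_{ij}$ whose diagonal part induces the set partition $c_1\sqcup\cdots\sqcup c_k$ contributes, after the evaluation $\psi_i=1,-D_{ij}=1$, exactly the product of the evaluations of the corresponding connected monomials on each block. This is where the labeling subtlety lives: the diagonal variables $D_{ij}$ only connect indices within a common block, the $\psi_i$ are attached to single indices, and the self-intersection relations $(-D_{ij})^2=\psi_i(-D_{ij})=\psi_j(-D_{ij})$ act within blocks, so no monomial in $c_t(\Omega_d)$ couples distinct blocks. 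Once this factorization is established, the passage to generating functions and the identification with $\log$ is routine and I would not belabor it.

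I would therefore present the argument compactly: state the multiplicative decomposition as the main combinatorial input, invoke the exponential formula for labeled structures to pass from $S^d_r$ to $M^d_r$, and conclude by applying Lemma~\ref{gcd2} to identify the total generating function as $f(t,x)$, so that the connected generating function is $\log f(t,x)$ as stated. The only genuine work is the block-factorization of the total Chern class expansion; the remainder is the formal manipulation $\exp\circ\log$ on exponential generating functions.
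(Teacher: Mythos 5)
Your proposal is correct and follows essentially the same route as the paper: the paper's proof is exactly a ``standard application of Wick's formula'' (the exponential formula relating connected and disconnected counts) followed by an appeal to Lemma~\ref{gcd2} to evaluate the disconnected generating function. Your additional verification of block-multiplicativity of monomials under the evaluation $\psi_i=1$, $-D_{ij}=1$ simply makes explicit the combinatorial input the paper leaves implicit.
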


\begin{proof}
By a standard application of Wick's formula, the connected
and disconnected counts are related by exponentiation,
$$\exp\left(\sum_{d=1}^\infty \sum_{r=0}^d
S_r^d \ t^r\frac{x^d}{d!}\right) =
1+ \sum_{d=1}^\infty \sum_{r=0}^\infty
M_r^d(\psi_i=1, -D_{ij}=1) \ t^r\frac{x^d}{d!} \ .$$
The right side is then evaluated by Lemma \ref{gcd2}.
\end{proof}

Since a connected monomial in the variables \eqref{gffvv}
must have at least $d-1$ factors of the variables $-D_{ij}$,
we see $S^d_r =0$ if $r<d-1$. Using the self-intersection
formulas, we obtain
\begin{equation}
\sum_{d=1}^ \infty \sum_{r=0}^d \pi^d_*\big(c_r(\Omega_d)\big)\ t^r\frac{x^d}{d!}
= \exp\left(\sum_{d=1}^\infty \sum_{r=0}^d
S_r^d (-1)^{d-1}\kappa_{r-d}\ t^r\frac{x^d}{d!}\right) \ . 
\end{equation}
To account for the alternating factor $(-1)^{d-1}$ and the $\kappa$ subscript,
we define the coefficients $C^d_r$ by 
$$\sum_{d=1}^\infty \sum_{r\geq -1}^d
C_r^d\  t^r \frac{x^d}{d!}  =
\log\left( 1+\sum_{d=1}^\infty \prod_{i=1}^d (1+it)
\frac{(-1)^d}{t^d} \frac{x^d}{d!}
\right)\ .
$$
The vanishing  $S^d_{r<d-1}=0$ implies the vanishing $C^d_{r<-1}=0$. 

The formula for the total Chern class of the Hodge bundle
$\mathbb{E}$ on $\mathcal{M}_g$ follows immediately from
Mumford's Grothendieck-Riemann-Roch calculation \cite{M},
$$c_t(\mathbb{E}) = \sum_{i\geq 1} \frac{B_{2i}}{2i(2i-1)} \kappa_{2i-1} t^{2i-1}\ .$$
Putting the above results together yields the following 
formula:
\begin{multline*}
\sum_{d=1}^ \infty \sum_{r\geq 0} \pi^d_*\big(c_r(Q_{d-g})\big)\ t^{r-d}\frac{x^d}{d!}
= \\
\exp\left(-\sum_{i\geq 1} \frac{B_{2i}}{2i(2i-1)} \kappa_{2i-1} t^{2i-1}
-\sum_{d=1}^\infty \sum_{r\geq -1}
C_r^d\ \kappa_r t^r \frac{x^d}{d!}
\right) \ .
\end{multline*}

\subsubsection{Cutting}
For $d>2g-2$ and $r>d-g$, we have the vanishing 
$$c_r(Q_{d-g})=0 \in 
A^r(\mathcal{C}^d_g, \mathbb{Q})\ .$$
Before pushing-forward via $\pi^d$, we will cut $c_r(Q_{d-g})$
with products of classes in $A^*(\mathcal{C}^d_g,\mathbb{Q})$.
With the correct choice of cutting classes, we will
obtain the relations of Theorem \ref{ttt}.

Let $(a,b)$ be a pair of integers satisfying $a\geq 0$ and $b\geq 1$.
We define the cutting class
\begin{equation}
\phi[a,b]= (-1)^{b-1}\sum_{|I|=b} \psi_I^a D_I   \label{kddc3}
\end{equation}
 where $I\subset \{1, \ldots, d\}$ is subset of
order $b$,
$D_I\in A^{b-1}(\mathcal{C}^d_g,\mathbb{Q})$
is the class of the corresponding small diagonal, and
$\psi_I$ is the cotangent line at the point indexed
by $I$. The class $\psi_I$ is well-defined on the
small diagonal indexed by $I$.
The degree of $\phi[a,b]$ is $a+b-1$.
The number of terms on the right side of \eqref{kddc3} is
a degree $b$ polynomial in $d$,
$$\binom{d}{b} = \frac{d^b}{b!} + \ldots +(-1)^{b-1}\frac{d}{b}\  $$
with no constant term.

The sign $(-1)^{b-1}$ in definition \eqref{kddc3}
is chosen to match the sign conventions of
the Wick analysis in Section \ref{cc12}.
For example,
$$\phi[0,2] = \sum_{i< j} (-D_{ij})\ , \ \ \ \
\phi[0,3]= \sum_{i<j<k} (-D_{ij})(-D_{jk}) .$$
The {\em number of terms} means the evaluation at
$\psi_I=1$ and $-D_{ij}=-1$.

A better choice of cutting class is obtained by the following
observation. For every pair of integers $(i,j)$ with
$i\geq 1$ and $j\geq i-1$, we can find a unique linear
combination
$$\Phi[i,j] = \sum_{a+b-1=j} \lambda_{a,b} \cdot \phi[a,b] , \ \ \ \lambda_{a,b}\in \mathbb{Q}$$
for which the evaluation of $\Phi[i,j]$ at $\psi_I=1$ and $-D_{ij}=-1$ is
$d^i$. By definition, $\Phi[i,j]$ is of pure degree $j$.

\subsubsection{Full Wick form}
We repeat the Wick analysis of Section \ref{cc12} for the 
Chern class of $Q_{d-g}$ cut by
the classes $\Phi[i,j]$ in order to write a formula
for
\begin{equation*}
\sum_{d=1}^ \infty \sum_{r\geq 0} \pi^d_*\left(
\exp\Big(\sum_{i,j} z_{i,j}t^j \Phi[i,j]  \Big) \cdot
c_r(Q_{d-g}) t^r\right)\ \frac{1}{t^d}\frac{x^d}{d!}
\end{equation*}
where the sum in the argument of the exponential
is over all $i\geq 1$ and $j\geq i-1$.
The variable set $\mathbf{z}$ introduced in Section \ref{wickf}
appears here. 
Since $\Phi[i,j]$ yields $d^i$ after evaluation at $\psi_I=1$ and
$-D_{ij}=-1$ and is of pure degree $j$, 
we conclude
\begin{equation}\label{mssx}
\sum_{d=1}^ \infty \sum_{r\geq 0} \pi^d_*\left(
\exp\Big(\sum_{i,j} z_{i,j}t^j \Phi[i,j]  \Big) \cdot
c_r(Q_{d-g}) t^r\right)\ \frac{1}{t^d}\frac{x^d}{d!}  = 
\exp(-\gamma^{\text{\tiny{{\sf F}}}})\ .
\end{equation}

Let $d>2g-2$.
Since $c_s(Q_{d-g})=0$ for $s>d-g$, the $t^rx^d\mathbf{z}^\sigma$
coefficient of \eqref{mssx} vanishes if
$$r+d - |\sigma| > d-g$$
which is equivalent to $r> -g + |\sigma|$.
The proof of Theorem \ref{ttt} is complete. \qed

\section{Stable quotients} \label{stq}

\subsection{Stability} Our proof of the Faber-Zagier
relations in $R^*(M_{g})$ will be obtained
from the virtual geometry of the moduli space of
stable quotients.
We start by reviewing the basic definitions
and results of \cite{MOP}.

Let $C$ be a curve which is reduced and connected and has 
at worst nodal singularities. We require here only
unpointed curves. See \cite{MOP} for the definitions
in the pointed case.
Let $q$ be a quotient of the rank $N$ trivial bundle
 $C$,
\begin{equation*}
\com^N \otimes \oh_C \stackrel{q}{\rarr} Q \rarr 0.
\end{equation*}
If the quotient subsheaf $Q$ 
is locally free at the nodes and markings of $C$,
 then
$q$ is a {\em quasi-stable quotient}. 
 Quasi-stability of $q$ implies the associated
kernel,
\begin{equation*}
0 \rightarrow S \rightarrow
\com^N \otimes \oh_C \stackrel{q}{\rarr} Q \rarr 0,
\end{equation*}
is a locally free sheaf on $C$. Let $r$ 
denote the rank of $S$.

Let $C$ be a curve equipped
with a quasi-stable quotient $q$.
The data $(C,q)$ determine 
a {\em stable quotient} if
the $\mathbb{Q}$-line bundle 
\begin{equation}\label{aam}
\omega_C
\otimes (\wedge^{r} S^*)^{\otimes \epsilon}
\end{equation}
is ample 
on $C$ for every strictly positive $\epsilon\in \mathbb{Q}$.
Quotient stability implies
$2g-2 \geq 0$.

Viewed in concrete terms, no amount of positivity of
$S^*$ can stabilize a genus 0 component 
$$\proj^1\stackrel{\sim}{=}P \subset C$$
unless $P$ contains at least 2 nodes or markings.
If $P$ contains exactly 2 nodes or markings,
then $S^*$ {\em must} have positive degree.

A stable quotient $(C,q)$
yields a rational map from the underlying curve
$C$ to the Grassmannian $\mathbb{G}(r,N)$.
 We will
only require the ${\mathbb{G}}(1,2)=\proj^1$ case
for the proof Theorem \ref{dddd}.

\subsection{Isomorphism}
Let $C$ be a curve.
Two quasi-stable quotients
\begin{equation}\label{fpp22}
\com^N \otimes \oh_C \stackrel{q}{\rarr} Q \rarr 0,\ \ \
\com^N \otimes \oh_C \stackrel{q'}{\rarr} Q' \rarr 0
\end{equation}
on $C$ 
are {\em strongly isomorphic} if
the associated kernels 
$$S,S'\subset \com^N \otimes \oh_C$$
are equal.

An {\em isomorphism} of quasi-stable quotients
 $$\phi:(C,q)\rarr
(C',q')
$$ is
an isomorphism of curves
$$\phi: C \stackrel{\sim}{\rarr} C'$$
such that
the quotients $q$ and $\phi^*(q')$ 
are strongly isomorphic.
Quasi-stable quotients \eqref{fpp22} on the same
curve $C$
may be isomorphic without being strongly isomorphic.

The following result is proven in \cite{MOP} by
Quot scheme methods from the perspective
of geometry relative to a divisor.

\begin{Theorem} The moduli space of stable quotients 
$\overline{Q}_{g}({\mathbb{G}}(r,N),d)$ parameterizing the
data
$$(C,\  0\rarr S \rarr
\com^N\otimes \oh_C \stackrel{q}{\rarr} Q \rarr 0),$$
with {\em rank}$(S)=r$ and {\em deg}$(S)=-d$,
is a separated and proper Deligne-Mumford stack of finite type
over $\com$.
\end{Theorem}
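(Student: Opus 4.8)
The plan is to realize the moduli space as a locally closed substack of a relative Quot scheme over the Artin stack $\mathfrak{M}_g$ of prestable (reduced, connected, nodal) genus $g$ curves, and then to verify in turn that it is of finite type over $\com$, Deligne--Mumford, separated, and proper. Over the universal curve $\mathcal{C} \rarr \mathfrak{M}_g$, Grothendieck's relative Quot scheme $\mathrm{Quot}$ parameterizes quotients $\com^N \otimes \oh_C \rarr Q \rarr 0$ with kernel of rank $r$ and degree $-d$; it is projective over each fixed curve, so the structure morphism $\mathrm{Quot} \rarr \mathfrak{M}_g$ is of finite type. The quasi-stability condition (local freeness of $Q$ at the nodes) and the stability condition (ampleness of the $\mathbb{Q}$-line bundle \eqref{aam} for every $\epsilon > 0$) are both open, so they carve out an open substack. \textbf{Finite type over $\com$} then follows because stability bounds the combinatorial type of $C$: every genus $0$ component must carry at least two special points or else force $\wedge^{r} S^*$ to have positive degree, and since the total degree $d$ is fixed this bounds the number of components, restricting attention to a finite-type locus of $\mathfrak{M}_g$ over which the open substack is of finite type.

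\textbf{The Deligne--Mumford property} reduces to showing the diagonal is unramified, equivalently that the automorphisms of a stable quotient $(C,q)$ form a finite reduced group. An automorphism must fix the subsheaf $S \subset \com^N \otimes \oh_C$ and hence preserve the polarization \eqref{aam}; any positive-dimensional automorphism would have to come from an unrigidified rational component, but stability forbids a genus $0$ component with exactly two special points unless $\wedge^{r} S^*$ has positive degree there, which rigidifies it. Thus all stabilizers are finite and reduced.

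\textbf{Separatedness} is checked by the valuative criterion over a discrete valuation ring $R$ with fraction field $K$. Given two stable quotients over $\mathrm{Spec}\,R$ that agree over $\mathrm{Spec}\,K$, stability makes the pair $(C, \wedge^{r} S^*)$ a canonically polarized family, so the two limiting curves are identified; the two limiting subsheaves are then both flat limits of the same generic subsheaf inside the separated relative Quot scheme, and uniqueness of the flat limit forces a strong isomorphism of the special fibers.

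\textbf{Properness} is the substantive step, established through the existence half of the valuative criterion. Starting from a stable quotient over $\mathrm{Spec}\,K$, the plan is: after a finite base change, apply semistable reduction to extend the family of curves across the closed point; take the flat limit of the kernel $S$ inside the proper relative Quot scheme; and then repair this limit, which in general fails to be a stable quotient, since $Q$ need not remain locally free at the nodes and the rational components introduced by semistable reduction may violate ampleness of \eqref{aam}. The repair is carried out by elementary modifications of $S$ along the exceptional components, twisting so as to restore local freeness of $Q$ at the nodes and to make $\wedge^{r} S^*$ sufficiently positive on the new rational bridges to recover stability; uniqueness of the resulting limit is guaranteed by the separatedness already established. \textbf{The main obstacle is exactly this construction}: producing, canonically and after only a finite base change, a genuinely stable limiting quotient out of an arbitrary Quot-scheme limit, since one must simultaneously control local freeness at every node and the $\epsilon$-ampleness on every component of the degenerate curve.
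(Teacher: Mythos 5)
The first thing to note is that the paper does not prove this theorem at all: it is quoted from the stable quotients paper of Marian--Oprea--Pandharipande, with the remark that it is ``proven in \cite{MOP} by Quot scheme methods from the perspective of geometry relative to a divisor.'' Your outline does match the skeleton of that construction --- an open substack of the relative Quot scheme over the Artin stack of prestable curves, boundedness from the stability constraint on genus $0$ components, finiteness of automorphisms by the stable-maps-style rigidification argument, and valuative criteria for separatedness and properness --- so the overall strategy is the right one.

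The problem is that your proposal stops exactly where the theorem begins. Properness is the entire content of the statement (everything else is comparatively routine), and your treatment of it is a description of what must be done rather than a proof that it can be done: you write that the Quot-scheme limit is ``repaired by elementary modifications of $S$ along the exceptional components, twisting so as to restore local freeness of $Q$ at the nodes and to make $\wedge^{r}S^*$ sufficiently positive,'' and then concede that ``the main obstacle is exactly this construction.'' What is missing is precisely the technical heart of \cite{MOP}: after semistable reduction and passage to the flat limit in the relative Quot scheme, one must (i) clear the torsion of the limiting quotient away from the nodes --- this is where the ``geometry relative to a divisor'' enters, the divisor being the locus of nodes, with torsion pushed off it by blow-ups of the total space and twists of $S$ by exceptional curves; (ii) contract the rational components on which ampleness of $\omega_C\otimes(\wedge^r S^*)^{\otimes\epsilon}$ fails, checking that contraction does not reintroduce torsion at nodes; and (iii) show this procedure terminates after a finite base change, preserves rank, degree, and flatness, and yields a limit that is unique (which then feeds back into separatedness). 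The interplay in (i)--(ii) is delicate because each operation can spoil the condition the other restores, and controlling that interplay is the actual work. A secondary soft spot: in your separatedness argument, the identification of the two limit curves via the ``canonical polarization'' is asserted rather than argued; the standard proof passes to a common modification dominating both families before comparing. As it stands, your proposal verifies the easy properties and reduces the theorem to its hardest claim, so it cannot be accepted as a proof.
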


\subsection{Structures}\label{strrr}
Over the moduli space of stable quotients, there is a universal
curve
\begin{equation}\label{ggtt}
\pi: U \rarr \overline{Q}_{g}({\mathbb{G}}(r,N),d)
\end{equation}
with a universal quotient
$$0 \rarr S_U \rarr \com^N \otimes \oh_U \stackrel{q_U}{\rarr} Q_U \rarr 0.$$
The subsheaf $S_U$ is locally 
free on $U$ because of the
stability condition.

The moduli space $\overline{Q}_{g}({\mathbb{G}}(r,N),d)$ is equipped
with two basic types of maps.
If $2g-2>0$, then the stabilization of $C$
determines a map
$$\nu:\overline{Q}_{g}({\mathbb{G}}(r,N),d) \rightarrow \overline{M}_{g}$$
by forgetting the quotient.

The general linear group $\mathbf{GL}_N(\com)$ acts on
$\overline{Q}_{g}({\mathbb{G}}(r,N),d)$ via 
the standard
action on $\com^N \otimes \oh_C$. The structures
$\pi$, $q_U$,
$\nu$ and the evaluations maps are all $\mathbf{GL}_N(\com)$-equivariant.

\subsection{Obstruction theory}
The moduli of stable
quotients 
 maps 
to the Artin stack of pointed domain curves
$$\nu^A:
\overline{Q}_{g}({\mathbb{G}}(r,N),d) \rightarrow {\mathcal{M}}_{g}.$$
The moduli  of stable quotients with fixed underlying
curve 
$[C] \in {\mathcal{M}}_{g}$
 is simply
an open set of the Quot scheme of $C$. 
The following result of \cite[Section 3.2]{MOP} is obtained from the
standard deformation theory of the Quot scheme.

\begin{Theorem}\label{htr}
The deformation theory of the Quot scheme 
determines a 2-term obstruction theory on the moduli space
$\overline{Q}_{g}({\mathbb{G}}(r,N),d)$ relative to
$\nu^A$
given by ${{RHom}}(S,Q)$.
\end{Theorem}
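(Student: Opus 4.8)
The plan is to work entirely relative to $\nu^A$, so that the problem localizes to the deformation theory of a single fiber. Over a fixed moduli point $[C]\in{\mathcal{M}}_{g}$, the fiber of $\nu^A$ is, by definition, an open substack of the Quot scheme of quotients $\com^N\otimes\oh_C\rarr Q\rarr 0$ of the prescribed rank and degree. Hence the relative deformation theory of $\overline{Q}_{g}({\mathbb{G}}(r,N),d)$ over ${\mathcal{M}}_{g}$ agrees fiberwise with Grothendieck's classical deformation theory of the Quot scheme, which I would invoke directly rather than redevelop.

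First I would recall the standard Quot-scheme computation at a point given by the sequence $0\rarr S\rarr\com^N\otimes\oh_C\rarr Q\rarr 0$. First-order deformations of the quotient (equivalently, of the subsheaf $S$) are canonically $\operatorname{Hom}_C(S,Q)$, and the obstructions to extending deformations lie in $\Ext^1_C(S,Q)$; both are the hypercohomology on $C$ of the local complex $\underline{RHom}(S,Q)$ in degrees $0$ and $1$. To globalize, I would package these fiberwise data into the object $R\pi_*\,\underline{RHom}(S_U,Q_U)$ on $\overline{Q}_{g}({\mathbb{G}}(r,N),d)$, where $\pi\colon U\rarr\overline{Q}_{g}({\mathbb{G}}(r,N),d)$ is the universal curve and $0\rarr S_U\rarr\com^N\otimes\oh_U\rarr Q_U\rarr 0$ is the universal sequence from Section \ref{strrr}. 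The claimed relative obstruction theory is then the morphism $\big(R\pi_*\,\underline{RHom}(S_U,Q_U)\big)^\vee\rarr L_{\nu^A}$ to the cotangent complex of $\nu^A$.

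The assertion that the theory is \emph{2-term} is an amplitude statement, and here I would use two facts. Because the stability condition forces $S_U$ to be locally free on $U$, the local complex collapses to a single sheaf, $\underline{RHom}(S_U,Q_U)\cong S_U^\vee\otimes Q_U$, concentrated in degree $0$. Because $\pi$ is a family of at worst nodal curves, hence of relative dimension $1$, the derived pushforward $R\pi_*$ has cohomological amplitude $[0,1]$. Combining the two, $R\pi_*(S_U^\vee\otimes Q_U)$ lives only in degrees $0$ and $1$, so its dual is perfect of amplitude $[-1,0]$ — exactly a 2-term obstruction theory, with tangent sheaf $R^0\pi_*(S_U^\vee\otimes Q_U)$ and obstruction sheaf $R^1\pi_*(S_U^\vee\otimes Q_U)$.

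The genuine content, and the step I expect to be the main obstacle, is verifying that this morphism really is an obstruction theory in the Behrend--Fantechi sense: that the induced map on cohomology is an isomorphism on $h^0$ and a surjection on $h^{-1}$. This is precisely the compatibility of the Quot-scheme deformation--obstruction datum with the relative cotangent complex $L_{\nu^A}$, and it is the assertion established in \cite[Section 3.2]{MOP}; I would quote that comparison rather than reprove it. The remaining checks — base-change functoriality of $R\pi_*$ in the family, and the identification of the fiberwise hypercohomology groups with $\operatorname{Hom}_C(S,Q)$ and $\Ext^1_C(S,Q)$ — are routine.
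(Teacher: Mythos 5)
Your proposal is correct and takes essentially the same route as the paper: the paper also presents this theorem as a direct citation of \cite[Section 3.2]{MOP}, with the 2-term property justified exactly as you do --- quasi-stability makes $S$ locally free, so the higher obstructions $\Ext^k(S,Q)=H^k(C,S^*\otimes Q)$ vanish for $k>1$ because $C$ is a curve. Your additional packaging of the relative theory as $\big(R\pi_*(S_U^\vee\otimes Q_U)\big)^\vee \rightarrow L_{\nu^A}$ is just an explicit version of what the paper leaves implicit in quoting \cite{MOP}.
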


More concretely, for the stable quotient,
\begin{equation*}
0 \rightarrow S \rightarrow
\com^N \otimes \oh_C \stackrel{q}{\rarr} Q \rarr 0,
\end{equation*} the
deformation and obstruction spaces relative to $\nu^A$ are
$\text{Hom}(S,Q)$ and $\text{Ext}^1(S,Q)$
respectively. Since $S$ is locally free, the higher obstructions
$$\text{Ext}^{k}(S,Q)= H^{k}(C,S^*\otimes Q) = 0, \ \ \ k>1$$
vanish since $C$ is a curve.
An absolute 2-term obstruction theory on the moduli space
$\overline{Q}_{g}({\mathbb{G}}(r,N),d)$ is
obtained from Theorem \ref{htr} and the smoothness
of $\mathcal{M}_{g}$, see \cite{Beh,BF,GP}. The
 analogue of Theorem \ref{htr} for the Quot scheme of a {\it fixed} nonsingular
 curve was observed in \cite {MO}.

The $\mathbf{GL}_N(\com)$-action lifts to the
obstruction theory,
and the resulting virtual class is
defined in $\mathbf{GL}_N(\com)$-equivariant cycle theory,
$$[\overline{Q}_{g}({\mathbb{G}}(r,N),d)]^{vir} 
\in A_*^{\mathbf{GL}_N(\com)}
(\overline{Q}_{g}({\mathbb{G}}(r,N),d)).$$

For the construction of the Faber-Zagier relation,
 we are mainly interested in the open stable quotient
space 
$$\nu: Q_g(\mathbf{P}^1,d) \longrightarrow \MM_g$$
which is simply the corresponding relative Hilbert scheme.
However, 
we will require the full stable quotient space 
$\overline{Q}_g(\mathbf{P}^1,d)$ to prove the
Faber-Zagier relations can be completed over $\MM_g$
with tautological boundary terms.

\section{Stable quotients relations} \label{MOP}

\subsection{First statement} \label{fsec}

Our relations in the tautological ring  $R^*({\mathcal{M}}_g)$
obtained from the moduli of stable quotients are based on the function
\begin{equation}\label{jsjs}
\Phi(t,x) = \sum_{d=0}^\infty \prod_{i=1}^d \frac{1}{1-it} \ 
\frac {(-1)^d}{d!} \frac{x^d}{t^{d}} \ .
\end{equation}
Define the coefficients $\widetilde{C}^d_{r}$ by the logarithm,
$$\log(\Phi)= \sum_{d=1}^\infty \sum_{r=-1}^\infty \widetilde{C}^d_{r}\ t^r \frac{x^d}{d!}
\ .$$
Again, by an application of Wick's formula in Section \ref{pop}, the
$t$ dependence has at most a simple pole.
Let
\begin{equation}\label{f444}
\widetilde{\gamma}=  \sum_{i\geq 1} \frac{B_{2i}}{2i(2i-1)} \kappa_{2i-1} t^{2i-1}
+ 
\sum_{d=1}^\infty \sum_{r=-1}^\infty \widetilde{C}^d_r \kappa_r t^r \frac{x^d}{d!}\ .
\end{equation}
Denote the $t^rx^d$ coefficient of $\exp(-\widetilde{\gamma})$ by
$$\big[ \exp(-\widetilde{\gamma}) \big]_{t^rx^d} \in \mathbb{Q}[\kappa_{-1},
\kappa_0,\kappa_1,
\kappa_2, \ldots] \ .$$
In fact, $[ \exp(-\widetilde{\gamma})]_{t^rx^d}$ is homogeneous of degree $r$
in the $\kappa$ classes.

The first form of the tautological relations obtained from the
moduli of stable quotients
is given by the following result.

\begin{Proposition} \label{vtw}
In $R^r({\mathcal{M}}_g)$, the relation
\begin{equation*}
\big[ \exp(-\widetilde{\gamma}) \big]_{t^rx^d} =0  
\end{equation*}
holds when $g-2d-1< r$ and 
$g\equiv r+1 \hspace{-5pt} \mod 2$.
\end{Proposition}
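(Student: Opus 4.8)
The plan is to read the relation off the $\Cs$-equivariant virtual geometry of $\overline{Q}_g(\Pp,d)$, in direct analogy with the classical argument of Theorem \ref{ttt}, the Chern classes of $\Omega_d$ now being replaced by virtual localization contributions. I would use the $\Cs$-action induced by the torus acting on $\com^2=\com\langle e_0\rangle\oplus\com\langle e_\infty\rangle$ with weights $0$ and $t$; by the structures of Section \ref{strrr} it lifts to the universal sequence $0\to S_U\to\com^2\otimes\oh_U\to Q_U\to 0$ and to the two-term obstruction theory $R\mathrm{Hom}(S,Q)$ of Theorem \ref{htr}, refining the virtual class to $[\overline{Q}_g(\Pp,d)]^{\vir}\in A^{\Cs}_*(\overline{Q}_g(\Pp,d))$. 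Relative to the Artin stack $\MM_g$ the obstruction theory is $H^\bullet(C,S^*\otimes Q)$, and since $S^*\otimes Q$ is a line bundle of degree $2d$ its Euler characteristic is $2d+1-g$; hence the relative virtual dimension is $2d+1-g$ and the absolute virtual dimension is $2g-2+2d$.

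The key step is the virtual localization formula of \cite{GrP} applied to the proper map $\nu\colon \overline{Q}_g(\Pp,d)\to\overline{M}_g$. Since $\Cs$ acts trivially on the base, the equivariant pushforward $P_d:=\nu_*[\overline{Q}_g(\Pp,d)]^{\vir}$ lies in $A^{\Cs}_*(\overline{M}_g)=A_*(\overline{M}_g)[t]$, with $t$ of degree $-1$. I would enumerate the $\Cs$-fixed loci, indexed by the way the degree $d$ and the subsheaf $S$ distribute between the two torus weights, and evaluate the inverse equivariant Euler class of each virtual normal bundle: the moving part of $H^\bullet(C,S^*\otimes Q)$ should contribute the factors $\prod_{i=1}^d \frac{1}{1-it}$ of \eqref{jsjs}, while the weight-zero part involves $R\pi_*\oh$, whose first direct image is the dual Hodge bundle, so that Mumford's Grothendieck-Riemann-Roch computation \cite{M} produces the Bernoulli term of \eqref{f444}. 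Summing the fixed-point contributions over all $d$ against the bookkeeping variable $x$, and passing from the disconnected to the connected count by the exponentiation (Wick) argument of Section \ref{pop} — which is exactly what the definitions of $\Phi$ and of the coefficients $\widetilde{C}^d_r$ encode — I expect the contribution of a single torus-fixed point, restricted to $\MM_g$, to be precisely the series $\exp(-\widetilde{\gamma})$, so that its codimension-$r$ part in degree $d$ is $[\exp(-\widetilde{\gamma})]_{t^rx^d}$. (The fixed loci over $\partial\overline{M}_g$ produce the tautological boundary terms of the refined statement announced in the introduction.)

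With this identification, the vanishing is forced by a dimension count. Restricting to the open locus and using properness of $\nu$ over $\MM_g$ together with $A_m(\MM_g)=0$ for $m>3g-3$, the class $P_d|_{\MM_g}$ has equivariant dimension $2g-2+2d$; writing it in $A_*(\MM_g)[t]$ with only nonnegative powers of $t$, its component in $A^r(\MM_g)$ appears with the factor $t^{(g-1-2d)-r}$ and hence must vanish as soon as $(g-1-2d)-r<0$, i.e. whenever $g-2d-1<r$. Moreover, since $[\overline{Q}_g(\Pp,d)]^{\vir}$ is $\mathbf{GL}_2(\com)$-equivariant, $P_d$ is invariant under the residual Weyl involution, which acts on the equivariant parameter by $t\mapsto -t$; thus the codimension-$r$ part of $P_d$ receives the two fixed-point contributions $[\exp(-\widetilde{\gamma})]_{t^rx^d}$ and $(-1)^{(g-1-2d)-r}[\exp(-\widetilde{\gamma})]_{t^rx^d}$. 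These reinforce exactly when $(g-1-2d)-r$ is even, that is when $g\equiv r+1\bmod 2$, in which case the dimension bound yields $[\exp(-\widetilde{\gamma})]_{t^rx^d}=0$ in $R^r(\MM_g)$ for $g-2d-1<r$; for the opposite parity the two contributions cancel and no relation is produced, which is precisely why the modulo $2$ hypothesis appears in the Proposition.

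The main obstacle is the second paragraph: correctly enumerating the $\Cs$-fixed loci, evaluating the inverse Euler classes of their virtual normal bundles, and verifying that the full fixed-point sum reassembles — after the connected-count manipulation of Section \ref{pop} — into exactly $\exp(-\widetilde{\gamma})$ with the precise Bernoulli normalization of \eqref{f444}, including the careful matching of $t$-degrees absorbed into the $x^d/t^d$ and Euler-class denominators. This localization identity, and the sign bookkeeping behind the Weyl symmetry, are where the real work lies; by contrast the dimension argument that converts the identity into the stated vanishing is formal, just as the cutting-and-pushforward step completed the proof of Theorem \ref{ttt}.
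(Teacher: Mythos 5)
Your plan is essentially the paper's own proof: the same $\com^*$-action with weights $[0,1]$ on $Q_g(\proj^1,d)$, the same two fixed loci $\mathcal{C}^d_g/\mathbb{S}_d$ over $0$ and $\infty$, the same identification of their contributions with $\exp(-\widetilde{\gamma})$ through the connected-count (Wick) analysis of Section \ref{pop}, and the same parity mechanism. Even your extraction device is equivalent to the paper's: demanding that the equivariant pushforward $\nu_*[Q_g(\proj^1,d)]^{vir}$ contain no negative powers of $t$ is exactly the paper's statement that $\nu_*\left(0^c\cap[Q_g(\proj^1,d)]^{vir}\right)$ vanishes for $c>0$, where the trivial bundle is lifted with fiber weight $1$ so that $0^c$ becomes $t^c$; in both formulations the relation says that the $t^{-c}$-coefficients of the two fixed-locus contributions cancel, and your bookkeeping ($t$-degree $(g-1-2d)-r$ for the codimension-$r$ part) matches the paper's $r=g-2d-1+c$ with $c>0$ even.

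One detail of your sketch is wrong, however, and would derail the verification you defer to your second paragraph. You place $R\pi_*\oh$, hence the dual Hodge bundle, in the \emph{weight-zero} part of the obstruction theory. In fact, over the fixed locus at $0$ the weight-zero part of $H^\bullet(C,S^*\otimes Q)$ is $H^0(C,\oh_C(D)|_D)$ with $D=p_1+\cdots+p_d$: this is precisely the relative tangent space of $\mathcal{C}^d_g\rightarrow \MM_g$ and contributes only the fundamental class of the smooth fixed locus. The entire virtual normal bundle is the weight-one piece $H^0(C,\oh_C(D))-H^1(C,\oh_C(D))=-\mathbb{F}_d$, and it is there --- via $\mathbb{F}_d=\mathbb{E}^*-\mathbb{B}_d-\com$ of \eqref{laloo1} and the factorization $c(\mathbb{F}_d)=c(\mathbb{E}^*)\,c(-\mathbb{B}_d)$ --- that \emph{both} the Bernoulli/Hodge terms and the $\prod_{i=1}^d(1-it)^{-1}$ factors arise. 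This must be so on formal grounds: the Bernoulli terms enter $\widetilde{\gamma}$ as $\kappa_{2i-1}t^{2i-1}$, with nontrivial $t$-dependence, which no weight-zero (fixed) contribution could ever produce. With that correction, your argument coincides with the paper's proof.
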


For fixed $r$ and $d$,
if Proposition \ref{vtw} applies in genus $g$, 
then Proposition \ref{vtw} applies in genera $h=g-2\delta$ for all
natural numbers
$\delta\in \mathbb{N}$.
The genus shifting mod 2 property is 
present also in the Faber-Zagier relations.

\subsection{$K$-theory class $\mathbb{F}_d$}

For genus $g \geq 2$, we consider as before
$$\pi^d: \mathcal{C}^d_g \rarr \MM_g\ , $$
the $d$-fold product of the
universal curve over $M_g$. 
Given an element
$$[C, {p}_1,
\ldots,{p}_d] \in \mathcal{C}^d_g \ , $$
there is a canonically associated stable quotient
\begin{equation}\label{jwq2}
0 \rarr \oh_C(-\sum_{j=1}^d {p}_j) \rarr \oh_C \rarr Q \rarr 0.
\end{equation}
Consider the universal curve
$$\epsilon: U \rarr {\mathcal{C}}^d_{g}$$
with universal quotient sequence
$$0 \rarr S_U \rarr \oh_U \rarr Q_U \rarr 0$$
obtained from \eqref{jwq2}.
Let
$$\mathbb{F}_d= -R\epsilon_*(S^*_U) \in K(\mathcal{C}^d_g)$$
be the class in $K$-theory.
For example,
$$\mathbb{F}_0 = \mathbb{E}^*-\com$$
is the dual of the Hodge bundle minus a rank 1 trivial
bundle. 

By Riemann-Roch,
the rank of 
$\mathbb{F}_d$ is 
$${r}_g(d)=g-d-1.$$ 
However, $\mathbb{F}_d$ is not always represented by a bundle. 
By the derivation of \cite[Section 4.6]{MOP},
\begin{equation}\label{laloo1}
\mathbb{F}_d = \mathbb{E}^*- \mathbb{B}_d - \com,
\end{equation} where 
$\mathbb{B}_d$ has fiber
$H^0(C,\oh_C(\sum_{j=1}^d {p}_j)|_{\sum_{j=1}^d {p}_j})$
over $[C, {p}_1,\ldots,{p}_d].$

The Chern classes of $\mathbb F_d$ can be easily computed. 
Recall the divisor $D_{i,j}$ where the markings $p_i$ 
and $p_j$ coincide. Set $$\Delta_i=D_{1,i}+\ldots+D_{i-1,i},$$ 
with the convention $\Delta_1=0.$ 
Over $[C,  p_1, \ldots,  p_d],$ the virtual 
bundle $\mathbb F_d$ is the formal difference 
$$H^1(\mathcal O_C( p_1+\ldots+ p_d))
-H^0(\mathcal O_C(p_1+\ldots+p_d)).$$ 
Taking the cohomology of the exact sequence 
$$0\to \mathcal O_C( p_1+\ldots+ p_{d-1})\to 
\mathcal O_C(p_1+\ldots+p_d)\to  
\mathcal O_C(p_1+\ldots+p_d)|_{\widehat p_d}\to 0,$$ 
we find $$c(\mathbb F_d)=
\frac{c(\mathbb F_{d-1})}{1+\Delta_d- \psi_d}.$$ 
Inductively, we obtain
 \begin{equation*}c(\mathbb F_d)=
\frac{c(\mathbb E^*)}{(1+\Delta_1-{\psi}_1)\cdots 
(1+\Delta_d-{\psi}_d)}.\end{equation*}
Equivalently, we have
\begin{equation}\label{laloo2}c(-\mathbb B_d)=
\frac{1}{(1+\Delta_1-{\psi}_1)\cdots 
(1+\Delta_d-{\psi}_d)}.\end{equation}

\subsection{Proof of Proposition \ref{vtw}} \label{pop}

Consider the proper morphism
$$\nu: Q_{g}(\proj^1,d) \rarr M_g.$$
Certainly the
class
\begin{equation}\label{p236}
\nu_*\left( 0^c \cap [Q_g(\proj^1,d)]^{vir} \right)
\in A^*(\MM_g,\mathbb{Q}),
\end{equation}
where $0$ is the first Chern class of the trivial bundle,
 vanishes if $c>0$. 
Proposition \ref{vtw} is proven by calculating \eqref{p236} by localization.
We will find Proposition \ref{vtw} is a subset of the much richer
family of relations of Theorem \ref{mmnn} of Section \ref{exrel}.

Let the torus $\com^*$ act on a 2-dimensional vector
space $V\stackrel{\sim}{=}\com^2$ with diagonal weights
$[0,1]$.  The $\com^*$-action lifts
canonically to 
$\proj(V)$
and
$Q_{g}(\proj(V),d)$.
We lift the $\com^*$-action to a rank 1 trivial bundle on $Q_g(\proj(V), d)$
by specifying fiber weight $1$.
The choices determine a $\com^*$-lift of the
class 
$$
 0^c \cap [Q_g(\proj(V),d)]^{vir}\in 
A_{2d+2g-2-c}(
Q_g(\proj(V),d),\mathbb{Q}).$$
 
The push-forward  \eqref{p236} is determined by 
the virtual localization formula \cite{GrP}. There are
only two $\com^*$-fixed loci.
The first corresponds to a vertex lying over $0\in \proj(V)$.
The locus is isomorphic to
$$\mathcal{C}^d_g\ /\ \mathbb{S}_d$$
 and the
associated subsheaf \eqref{jwq2}
lies in the first factor of
$V \otimes  \oh_C$ when considered as a stable quotient in
the moduli space $Q_g(\proj(V),d)$.
Similarly, the second fixed locus 
corresponds to a vertex lying over $\infty\in \proj(V)$.

The localization contribution of the first locus to \eqref{p236} is 
$$\frac{1}{d!}\pi^d_*
\left( c_{g-d-1+c}(\mathbb{F}_d)\right)\  \  \ \ \text{where} \ \ \ \
\pi^d: \mathcal{C}^d_g \rightarrow \MM_g\ . $$
Let $c_-(\mathbb{F}_d)$ denote the total Chern class of $\mathbb{F}_d$
evaluated at $-1$.
The localization contribution of the second locus is
$$\frac{(-1)^{g-d-1}}{d!}\pi^d_*\Big[
 c_{-}(\mathbb{F}_d)\Big]^{g-d-1+c}$$
where $[\gamma]^k$ is the part of $\gamma$ in $A^k(
\mathcal{C}^d_g,\mathbb{Q})$.

Both localization contributions are found by straightforward
expansion of the vertex formulas of \cite[Section 7.4.2]{MOP}.
Summing the contributions yields 
\begin{multline*}
\pi^d_*\Big(
 c_{g-d-1+c}(\mathbb{F}_d) + 
(-1)^{g-d-1} \Big[
 c_{-}(\mathbb{F}_d) \Big] ^{g-d-1+c}
\Big) = 0 \ \ \ 
\text{in  }\ R^*(\MM_g)\ 
\end{multline*}
for $c>0$. We obtain the following result.

\begin{Lemma} \label{htht}
For  $c>0$ and $c\equiv 0 \mod 2$,
\begin{equation*}
\pi^d_*\Big(
 c_{g-d-1+c}(\mathbb{F}_d) 
\Big) = 0 \ \ \ 
\text{in  }\ R^*(\MM_g)\ .
\end{equation*}
\end{Lemma}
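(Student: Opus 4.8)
The plan is to extract the lemma directly from the summed localization identity
\begin{equation*}
\pi^d_*\Big( c_{g-d-1+c}(\mathbb{F}_d) + (-1)^{g-d-1}\big[ c_{-}(\mathbb{F}_d)\big]^{g-d-1+c}\Big)=0 ,
\end{equation*}
established just above and valid in $R^*(\MM_g)$ for every $c>0$. The only thing that remains is to compare the two terms inside the push-forward, and it is here that the parity of $c$ enters decisively.

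First I would unwind the definition of $c_-(\mathbb{F}_d)$. Since this is the total Chern class $c_t(\mathbb{F}_d)=\sum_k c_k(\mathbb{F}_d)\,t^k$ specialized at $t=-1$, its homogeneous component in $A^m(\mathcal{C}^d_g,\mathbb{Q})$ is exactly $(-1)^m c_m(\mathbb{F}_d)$. Taking $m=g-d-1+c$ gives
\begin{equation*}
\big[ c_{-}(\mathbb{F}_d)\big]^{g-d-1+c}=(-1)^{g-d-1+c}\,c_{g-d-1+c}(\mathbb{F}_d).
\end{equation*}

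Next I would substitute this back and collect the signs. The coefficient multiplying the second copy of $c_{g-d-1+c}(\mathbb{F}_d)$ becomes $(-1)^{g-d-1}\cdot(-1)^{g-d-1+c}=(-1)^{2(g-d-1)+c}=(-1)^c$, so the summed identity reads
\begin{equation*}
\big(1+(-1)^c\big)\,\pi^d_*\big( c_{g-d-1+c}(\mathbb{F}_d)\big)=0 .
\end{equation*}
When $c\equiv 0 \bmod 2$ the factor $1+(-1)^c$ equals $2$, and since we work with $\mathbb{Q}$-coefficients we may divide by $2$ to obtain the stated vanishing.

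As for the main obstacle: there is essentially none in this final step, which is pure sign bookkeeping; all of the real work has already gone into identifying the two fixed loci and computing their contributions. What the computation does make transparent is why the hypothesis $c\equiv 0 \bmod 2$ is forced. For $c$ odd the factor $1+(-1)^c$ vanishes, the two fixed-point contributions cancel, and the summed identity degenerates to $0=0$, yielding no relation; the even-$c$ restriction is therefore a necessity rather than a convenience.
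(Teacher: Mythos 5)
Your proposal is correct and takes essentially the same route as the paper: the paper states the summed localization identity
$\pi^d_*\big(c_{g-d-1+c}(\mathbb{F}_d) + (-1)^{g-d-1}\big[c_-(\mathbb{F}_d)\big]^{g-d-1+c}\big)=0$
and passes to Lemma \ref{htht} by exactly the sign bookkeeping you perform, namely $\big[c_-(\mathbb{F}_d)\big]^{g-d-1+c}=(-1)^{g-d-1+c}c_{g-d-1+c}(\mathbb{F}_d)$, so the two terms combine to the factor $1+(-1)^c$, which equals $2$ when $c$ is even and allows division in $\mathbb{Q}$-coefficients. Your closing remark that odd $c$ degenerates to $0=0$ is likewise the reason the paper imposes $c\equiv 0 \bmod 2$.
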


For $c>0$, the relation of Lemma \ref{htht} lies in $R^r(\MM_g)$
where
$$r=g-2d-1+c\ .$$
Moreover, the relation is trivial unless
\begin{equation} \label{mss3}
g-d-1 \equiv g-d-1+c = r-d \ \mod 2\ .
\end{equation}

We may expand the right side of \eqref{laloo2} fully. 
The resulting expression is a  polynomial in the 
$d+ \binom{d}{2}$ variables. 
$${\psi}_1,\ldots, {\psi}_d, -D_{12},-D_{13}, 
\ldots,- D_{d-1,d}\ .$$ 
Let $\widetilde{M}_r^d$ denote the coefficient in degree $r$,
$$c_t( -\mathbb{B}_d) =\sum_{r=0}^\infty
\widetilde{M}_r^d({\psi}_i,- D_{ij}) \ t^r.$$
Let $\widetilde{S}^d_r$ be the summand of the 
evaluation  $\widetilde{M}^d_r({\psi}_i=1, -D_{ij}=1)$ 
consisting of the contributions of
only the connected monomials.

\begin{Lemma} \label{llggg}
We have
$$\sum_{d=1}^\infty \sum_{r=0}^\infty
\widetilde{S}_r^d\  t^r \frac{x^d}{d!}  =
\log\left( 1+\sum_{d=1}^\infty \prod_{i=1}^d \frac{1}{1-it}
\ \frac{x^d}{d!}\right)\ .
$$
\end{Lemma}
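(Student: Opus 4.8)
The plan is to prove Lemma \ref{llggg} in direct parallel with Lemma \ref{llgg}, the only difference being that the total Chern class $c_t(\Omega_d)=\prod_{i=1}^d(1+(\psi_i-\Delta_i)t)$ is replaced by the factorized expression \eqref{laloo2},
$$
c_t(-\mathbb{B}_d)=\prod_{i=1}^d \frac{1}{1-(\psi_i-\Delta_i)t}\ ,
$$
where $\Delta_i=D_{1,i}+\ldots+D_{i-1,i}$. First I would record the analogue of Lemma \ref{gcd2}: setting all variables $\psi_i=1$ and $-D_{ij}=1$ in $c_t(-\mathbb{B}_d)$ yields $\prod_{i=1}^d \frac{1}{1-it}$, since each factor $\frac{1}{1-(\psi_i-\Delta_i)t}$ becomes $\frac{1}{1-it}$ after the substitution collapses $\psi_i-\Delta_i$ to $1-(i-1)=\ldots$—more precisely, the same bookkeeping that produces $1+it$ in the wedge case produces $\frac{1}{1-it}$ here. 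Thus
$$
\sum_{r=0}^\infty \widetilde{M}^d_r(\psi_i=1,-D_{ij}=1)\,t^r=\prod_{i=1}^d\frac{1}{1-it}\ .
$$

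The central step is then the Wick exponentiation relating disconnected counts $\widetilde{M}^d_r$ to the connected counts $\widetilde{S}^d_r$. As in the proof of Lemma \ref{llgg}, a monomial in the diagonal variables \eqref{gqq6} determines a set partition of $\{1,\ldots,d\}$, and the generating function over all such monomials factors as a product over the parts of the partition. The sign convention $(-D_{ij})$ matches the self-intersection relations exactly as in Section \ref{cc12}, so the combinatorics of gluing connected pieces into a disconnected configuration is identical; only the per-vertex weights differ. This yields the exponential relation
$$
\exp\!\left(\sum_{d=1}^\infty\sum_{r=0}^\infty \widetilde{S}^d_r\,t^r\frac{x^d}{d!}\right)
=1+\sum_{d=1}^\infty\sum_{r=0}^\infty \widetilde{M}^d_r(\psi_i=1,-D_{ij}=1)\,t^r\frac{x^d}{d!}\ .
$$
Taking the logarithm of both sides and substituting the evaluation from the previous paragraph gives the claimed formula.

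The main obstacle, such as it is, lies in justifying that the Wick/exponential factorization is insensitive to whether the per-factor weight is the polynomial $1+(\psi_i-\Delta_i)t$ or the geometric series $\frac{1}{1-(\psi_i-\Delta_i)t}$. The argument in Section \ref{cc12} is phrased for the wedge product but depends only on the multiplicativity of the Chern class over the $d$ factors and on the combinatorial structure of the diagonal self-intersections; neither of these is altered by replacing each factor with its inverse. Concretely, expanding $\frac{1}{1-(\psi_i-\Delta_i)t}=\sum_{k\geq 0}(\psi_i-\Delta_i)^k t^k$ introduces higher powers in each variable, but after the evaluation $\psi_i=1$, $-D_{ij}=1$ these are already accounted for by the identity $\prod_i\frac{1}{1-it}$, and the connectedness bookkeeping of the diagonal variables is unchanged. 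Hence the proof reduces to citing the standard Wick formula exactly as in Lemma \ref{llgg}, and the result follows.
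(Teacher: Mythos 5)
Your proof is correct and takes essentially the same route as the paper: the paper's proof is exactly the Wick exponentiation relating the connected counts $\widetilde{S}^d_r$ to the disconnected counts $\widetilde{M}^d_r$, with the right side then evaluated via \eqref{laloo2}, which gives $\sum_r \widetilde{M}^d_r(\psi_i=1,-D_{ij}=1)\,t^r=\prod_{i=1}^d\frac{1}{1-it}$. One small slip in your evaluation step: under $\psi_i=1$, $-D_{ij}=1$ one has $\psi_i-\Delta_i\mapsto 1+(i-1)=i$ (not $1-(i-1)$), which is precisely what yields the factor $\frac{1}{1-it}$, so your conclusion is unaffected.
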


\begin{proof}
As before, by Wick's formula, the connected
and disconnected counts are related by exponentiation,
$$\exp\left(\sum_{d=1}^\infty \sum_{r=0}^\infty
\widetilde{S}_r^d \ t^r\frac{x^d}{d!}\right) =
1+ \sum_{d=1}^\infty \sum_{r=0}^\infty
\widetilde{M}_r^d(\widehat{\psi}_i=1, -D_{ij}=1) \ t^r\frac{x^d}{d!} \ .$$
\end{proof}

Since a connected monomial in the variables $\psi_i$ and $-D_{ij}$
must have at least $d-1$ factors of the variables $-D_{ij}$,
we see $\widetilde{S}^d_r =0$ if $r<d-1$. Using the self-intersection
formulas, we obtain
\begin{equation}
\sum_{d=1}^ \infty \sum_{r\geq 0} \pi^d_*\big(c_{r}(-\mathbb{B}_d)\big)\ t^r\frac{x^d}{d!}
= \exp\left(\sum_{d=1}^\infty \sum_{r=0}^\infty
\widetilde{S}_r^d (-1)^{d-1}\kappa_{r-d}\ t^r\frac{x^d}{d!}\right) \ . 
\end{equation}
To account for the alternating factor $(-1)^{d-1}$ and the $\kappa$ subscript,
we define the coefficients $\widetilde{C}^d_r$ by 
$$\sum_{d=1}^\infty \sum_{r\geq -1}
\widetilde{C}_r^d\  t^r \frac{x^d}{d!}  =
\log\left( 1+\sum_{d=1}^\infty \prod_{i=1}^d \frac{1}{1-it}\ 
\frac{(-1)^d}{t^d} \frac{x^d}{d!}
\right)\ .
$$
The vanishing  $\widetilde{S}^d_{r<d-1}=0$ implies the vanishing $\widetilde{C}^d_{r<-1}=0$. 

Again using
Mumford's Grothendieck-Riemann-Roch calculation \cite{M},
$$c_t(\mathbb{E}^*) = -\sum_{i\geq 1} \frac{B_{2i}}{2i(2i-1)} \kappa_{2i-1} t^{2i-1}\ .$$
Putting the above results together yields the following 
formula:
\begin{multline*}
\sum_{d=1}^ \infty \sum_{r\geq 0} \pi^d_*\big(c_r(\mathbb{F}_d)\big)\ t^{r-d}\frac{x^d}{d!}
= \\
\exp\left(-\sum_{i\geq 1} \frac{B_{2i}}{2i(2i-1)} \kappa_{2i-1} t^{2i-1}
-\sum_{d=1}^\infty \sum_{r\geq -1}
\widetilde{C}_r^d \kappa_r t^r \frac{x^d}{d!}
\right) \ .
\end{multline*}
The restrictions on $g$, $d$, and $r$ in the statement of
Proposition \ref{vtw} are obtained from \eqref{mss3}. \qed

\subsection{Extended relations} \label{exrel}
The universal curve 
$$\epsilon:U \rarr Q_{g}(\proj^1,d)$$ carries the  basic
divisor classes
$${s} = c_1(S_U^*), \ \ \ \  \omega= c_1(\omega_\pi)$$
obtained from the universal subsheaf $S_U$ 
of the moduli of stable quotients 
and the $\epsilon$-relative 
dualizing
sheaf. 
Following \cite[Proposition 5]{MOP}, we
can obtain a much larger set of relations in the tautological
ring of $\MM_g$ by including factors of $\epsilon_*(s^{a_i}\omega^{b_i})$
in the integrand:
\begin{equation*}
\nu_*\left(\prod_{i=1}^n\epsilon_*(s^{a_i} \omega^{b_i}) \cdot  0^c \cap [Q_g(\proj^1,d)]^{vir} \right)= 0\ \ 
\text{in} \  A^*(\MM_g,\mathbb{Q}) \ 
\end{equation*}
when $c>0$.
We will study the
associated relations where the $a_i$ are always $1$.
The $b_i$ then form the parts of a partition $\sigma$.

To state the relations we obtain, 
we start by extending the function $\widetilde{\gamma}$ of Section \ref{fsec},
\begin{eqnarray*}
\gamma^{\text{\tiny{{\sf SQ}}}} &=& 
 \sum_{i\geq 1} \frac{B_{2i}}{2i(2i-1)} \kappa_{2i-1} t^{2i-1}\\
& &
+ 
\sum_{\sigma}
\sum_{d=1}^\infty \sum_{r=-1}^\infty \widetilde{C}^{d}_r 
\kappa_{r+|\sigma|}\ t^r \frac{x^d}{d!} \ \frac{d^{\ell(\sigma)} t^{|\sigma|}
{\mathbf{p}}^{\sigma}}
{|\text{Aut}(\sigma)|}
\ .
\end{eqnarray*}
Let $\overline{\gamma}^{\, \text{\tiny{{\sf SQ}}}}$ be defined by a similar formula,
\begin{eqnarray*}
\overline{\gamma}^{\, \text{\tiny{{\sf SQ}}}} &=& 
 \sum_{i\geq 1} \frac{B_{2i}}{2i(2i-1)} \kappa_{2i-1} (-t)^{2i-1}\\
& &
+ 
\sum_{\sigma}
\sum_{d=1}^\infty \sum_{r=-1}^\infty \widetilde{C}^{d}_r 
\kappa_{r+|\sigma|}\ (-t)^r \frac{x^d}{d!} 
\ \frac{d^{\ell(\sigma)} t^{|\sigma|}
{\mathbf{p}}^{\sigma}}
{|\text{Aut}(\sigma)|}
\ .
\end{eqnarray*}
The sign of $t$ in $t^{|\sigma|}$ does not change in 
$\overline{\gamma}^{\, \text{\tiny{{\sf SQ}}}}$.
The $\kappa_{-1}$ terms which appear will later be set to
0.

The full system of relations are obtained from the 
coefficients of the functions
\vspace{-10pt}
$$\exp(-\gamma^{\text{\tiny{{\sf SQ}}}}), \ \ \ \ 
\exp(-\sum_{r=0}^\infty \kappa_r t^r p_{r+1})\cdot
\exp( -\overline{\gamma}^{\, \text{\tiny{{\sf SQ}}}})
$$

\begin{Theorem} \label{mmnn} In $R^r(\MM_g)$, the relation
$$\Big[ \exp(-\gamma^{\text{\tiny{{\sf SQ}}}}) \Big]_{t^rx^d\mathbf{p}^\sigma} =
(-1)^g\Big[ 
\exp(-\sum_{r=0}^\infty \kappa_r t^r p_{r+1})\cdot
\exp( -\overline{\gamma}^{\, \text{\tiny{{\sf SQ}}}})
 \Big]_{t^rx^d\mathbf{p}^\sigma}$$
holds when $g-2d-1+|\sigma| < r$.
\end{Theorem}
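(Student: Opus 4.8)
The plan is to rerun the virtual localization argument that establishes Proposition \ref{vtw}, but now with the additional tautological insertions $\prod_{i=1}^n\epsilon_*(s\,\omega^{b_i})$ in the integrand, starting from the vanishing
$$\nu_*\Big(\prod_{i=1}^n\epsilon_*(s\,\omega^{b_i}) \cdot 0^c\cap [Q_g(\proj^1,d)]^{vir}\Big)=0, \qquad c>0,$$
which holds by \cite[Proposition 5]{MOP}. I would keep the identical $\com^*$-action with weights $[0,1]$ on $V$ and fiber weight $1$ on the trivial line bundle, so that the two fixed loci and their virtual normal bundles are precisely those already analyzed for Proposition \ref{vtw}. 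All new content lies in tracking how the insertion classes $s=c_1(S_U^*)$ and $\omega=c_1(\omega_\pi)$ restrict to the two vertices.

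At the fixed locus over $0\in\proj(V)$, the universal subsheaf has weight $0$, so $S_U^*=\oh_C(\sum_j p_j)$ carries no equivariant twist and the pushed-forward insertions $\epsilon_*(s\,\omega^{b_i})$ are the usual $\kappa$-type classes. First I would show that assembling these insertions with the Chern-class computation of $\mathbb{F}_d$ from \eqref{laloo1}--\eqref{laloo2} and the connected/disconnected Wick exponentiation of Lemma \ref{llggg} reproduces exactly the coefficient $\big[\exp(-\gamma^{\text{\tiny{{\sf SQ}}}})\big]_{t^rx^d\mathbf{p}^\sigma}$. The partition variables $\mathbf{p}^\sigma$ record which insertions carry which exponent $b_i$, and the factor $d^{\ell(\sigma)}/|\Aut(\sigma)|$ is precisely the combinatorial weight produced by distributing the $n$ insertions among the $d$ markings on $\mathcal{C}_g^d/\mathbb{S}_d$.

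The heart of the argument is the contribution of the fixed locus over $\infty\in\proj(V)$, where the subsheaf lies in the weight-$1$ factor. Here $S_U^*$ acquires the equivariant weight, so $s$ is replaced by $s-\hbar$ (with $\hbar$ the equivariant parameter), while the Chern classes of $\mathbb{F}_d$ undergo the same weight reversal $t\mapsto -t$ that produced $c_-(\mathbb{F}_d)$ in Proposition \ref{vtw}; note that only the Chern-class powers of $t$ flip sign, whereas the insertion powers $t^{|\sigma|}$ are unaffected, exactly as recorded after the definition of $\overline{\gamma}^{\, \text{\tiny{{\sf SQ}}}}$. Thus this locus produces $\overline{\gamma}^{\, \text{\tiny{{\sf SQ}}}}$ together with the overall sign $(-1)^g$. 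I would then expand each shifted insertion as $\epsilon_*((s-\hbar)\omega^{b_i})=\epsilon_*(s\,\omega^{b_i})-\hbar\,\kappa_{b_i-1}$ and observe that, after specialization of $\hbar$, the terms in which an insertion is hit by the $-\hbar\,\kappa_{b_i-1}$ summand factor out of the product and assemble into exactly the prefactor $\exp(-\sum_{r}\kappa_r t^r p_{r+1})$; the drop of one power of $t$ relative to the unshifted insertions is precisely the weight carried by $\hbar$.

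Summing the two vertex contributions and setting the total equal to zero then yields the stated identity, and the range condition $g-2d-1+|\sigma|<r$ follows by writing $r=g-2d-1+|\sigma|+c$ and using $c>0$, exactly the rank bound on $c_\bullet(\mathbb{F}_d)$ shifted by the insertion degree $|\sigma|$. The main obstacle I anticipate is the bookkeeping at the $\infty$ vertex: verifying that the shift $s\mapsto s-\hbar$ in the linear ($a_i=1$) insertions produces the clean exponential $\exp(-\sum_r\kappa_r t^r p_{r+1})$ rather than uncontrolled cross terms, and confirming that the residual powers of the equivariant parameter cancel against the virtual normal bundle so that the final relation is genuinely non-equivariant in $R^r(\MM_g)$. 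Matching the overall sign $(-1)^g$ against the $(-1)^{g-d-1}$ of the second-locus contribution, after the factor $(-1)^{d+1}$ is absorbed into the definitions of $\overline{\gamma}^{\, \text{\tiny{{\sf SQ}}}}$ and the $x^d$-weighting, is a further delicate point to check.
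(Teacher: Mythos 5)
Your proposal is correct and follows essentially the same route as the paper's own proof: the same geometric vanishing from \cite{MOP} with insertions $\epsilon_*(s\,\omega^{b_i})$, the same two-fixed-locus localization, the Wick assembly of the $0$-vertex contribution into $\exp(-\gamma^{\mathsf{SQ}})$ (this is the paper's Proposition \ref{mmnn3}), and the shift $s\mapsto s-1$ at the $\infty$-vertex whose $\kappa_{b_i-1}$ terms produce the prefactor $\exp(-\sum_{r}\kappa_r t^r p_{r+1})$ with residual sign $(-1)^g$. The two delicate points you flag resolve exactly as you anticipate: the paper splits $(-1)^{g-d-1}$ as $(-1)^{-1}$ (moving the term across the equation), $(-1)^{-d}$ (absorbed into the $(-t)$ of $\overline{\gamma}^{\,\mathsf{SQ}}$), and $(-1)^g$, and the dimension count $r=g-2d-1+|\sigma|+c$ with $c>0$ gives the stated range.
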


Again, we see the genus shifting mod 2 property.
If the relation  holds in genus $g$, then the
{\em same} relation holds in genera $h=g-2\delta$ for all
natural numbers
$\delta\in \mathbb{N}$.

In case $\sigma=\emptyset$, Theorem \ref{mmnn} specializes to 
the relation
\begin{eqnarray*}
\Big[ \exp(-\widetilde{\gamma}(t,x)) \Big]_{t^rx^d} & = &
(-1)^g\Big[ 
\exp( -\widetilde{\gamma}(-t,x))
 \Big]_{t^rx^d} \\
& = & (-1)^{g+r} \Big[ 
\exp( -\widetilde{\gamma}(t,x))
 \Big]_{t^rx^d}\ ,
\end{eqnarray*}
nontrivial only if $g\equiv r+1$ mod 2. If the mod 2 condition
holds, then we obtain the relations of
Proposition \ref{vtw}.

Consider the case $\sigma=(1)$. The left side of the relation is
then
$$
\Big[ \exp(-\widetilde{\gamma}(t,x))\cdot \left(-\sum_{d=1}^\infty
\sum_{s=-1}^\infty \widetilde{C}^d_{s}\ \kappa_{s+1} t^{s+1} \frac{dx^d}{d!}\right)
\Big]_{t^rx^d} \ .
$$
The right side is
$$(-1)^g\Big[ \exp(-\widetilde{\gamma}(-t,x))\cdot\left(-\kappa_0t^0+
\sum_{d=1}^\infty
\sum_{s=-1}^\infty \widetilde{C}^d_{s}\ \kappa_{s+1} (-t)^{s+1} \frac{dx^d}{d!}
\right)
\Big]_{t^rx^d} \ .
$$

If $g\equiv r+1$ mod 2, then the large terms cancel and we
obtain
$$-\kappa_0 \cdot \Big[ \exp(-\widetilde{\gamma}(t,x)) \Big]_{t^rx^d} =0 \ . $$
Since $\kappa_0=2g-2$ and 
$$(g-2d-1+1 < r)   \ \ \implies\ \  (g-2d-1 < r),$$
we recover most (but not all)
of the $\sigma=\emptyset$ equations.

If $g\equiv r$ mod 2, then the resulting equation is
\begin{equation*}
\Big[ \exp(-\widetilde{\gamma}(t,x))\cdot \left(\kappa_0-2
\sum_{d=1}^\infty
\sum_{s=-1}^\infty \widetilde{C}^d_{s}\ \kappa_{s+1} t^{s+1} \frac{dx^d}{d!}\right)
\Big]_{t^rx^d}=0
\end{equation*}
when $g-2d<r$.

\subsection{Proof of Theorem \ref{mmnn}}

\subsubsection{Partitions, differential operators, and logs.}
\label{pdol}

We will write partitions $\sigma$ as $(1^{n_1}2^{n_2}3^{n_3}\ldots)$
with
$$\ell(\sigma)= \sum_{i} n_i \ \ \ \ \text{and} \ \ \ \
|\sigma|= \sum_i in_i\ .$$
The empty partition $\emptyset$ corresponding to
$(1^{0}2^{0}3^{0}\ldots)$
is permitted. In all cases, we have
$$|{\text{Aut}}(\sigma)|= n_1!n_2!n_3! \cdots \ .$$
In the 
 infinite set of  variables
$\{ p_1, p_2, p_3, \ldots \}$,
let
$$\Phi^{\mathbf{p}}(t,x) = 
\sum_\sigma \sum_{d=0}^\infty  
\prod_{i=1}^d \frac{1}{1-it} \ 
\frac {(-1)^d}{d!}  \frac{x^d}{t^{d}}\  
\frac{d^{\ell(\sigma)} t^{|\sigma|}
{\mathbf{p}}^{\sigma}}
{|\text{Aut}(\sigma)|}
\ ,$$
where the first sum is over all partitions $\sigma$.
The summand corresponding to the empty partition
equals $\Phi(t,x)$ defined in \eqref{jsjs}.

The function $\Phi^{\mathbf{p}}$ is easily obtained
from $\Phi$,
$$\Phi^{\mathbf{p}}(t,x) = \exp\left( \sum_{i=1}^\infty p_it^i x\frac{d}{dx}
\right)
\ \Phi(t,x)\ . $$
Let $D$ denote the differential operator
$$D= \sum_{i=1}^\infty p_it^i x\frac{d}{dx}\ .$$
Expanding the exponential of $D$, we obtain
\begin{eqnarray}
\Phi^{\mathbf{p}}& = & \Phi + D\Phi + \frac{1}{2} D^2\Phi+
\frac{1}{6} D^3 \Phi+\ldots \label{vfrr}
\\
& =& \nonumber
\Phi \left(1+\frac{D \Phi}{\Phi} +
\frac{1}{2} \frac{D^2 \Phi}{\Phi} + 
\frac{1}{6}\frac{D^3\Phi}{\Phi}+
\ldots\right) \ .
\end{eqnarray}
Let $\gamma^* = \log (\Phi)$ be the logarithm, 
 $$D\gamma^* = \frac{D\Phi}{\Phi}\ .$$
After applying the logarithm to \eqref{vfrr}, we see
\begin{eqnarray*}
\log(\Phi^{\mathbf{p}}) & =&
\gamma^* +\log\left(
 1+ D \gamma^* +  \frac{1}{2}( D^2\gamma^* + (D\gamma^*)^2)+ \ ... \right) \\
& = & \gamma^* + D\gamma^* + \frac{1}{2} D^2 \gamma^* + \ldots
\end{eqnarray*}
where the dots stand for a universal expression in
the $D^k\gamma^*$.
In fact, a remarkable simplification occurs,
$$\log(\Phi^{\mathbf{p}}) = 
\exp\left( \sum_{i=1}^\infty p_it^i x\frac{d}{dx}
\right) \ \gamma^*\ .$$
The result follows from a general identity.

\begin{Proposition} \label{vwwv3}
If $f$ is   a function of $x$, then
$$\log\left(\exp\left(\lambda x\frac{d}{dx}\right) \ f \right) =
\exp\left(\lambda x\frac{d}{dx}\right) \ \log(f)\ .$$
\end{Proposition}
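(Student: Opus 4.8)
The plan is to recognize the operator $\exp\!\left(\lambda x\frac{d}{dx}\right)$ as the dilation $x\mapsto e^{\lambda}x$ and then to observe that such a substitution commutes with the formal logarithm for the trivial reason that it is a ring homomorphism. Throughout, the natural ambient ring is $\mathbb{Q}[[t,\mathbf{p}]][[x]]$, with $\lambda=\sum_{i\geq 1}p_i t^i$ lying in the maximal ideal so that $e^{\lambda}$ is a well-defined unit; the Euler operator $\theta=x\frac{d}{dx}$ is a continuous operator there, and since $\lambda\in(t,\mathbf{p})$ each coefficient of $\exp(\lambda\theta)g$ is a finite sum, so $\exp(\lambda\theta)$ is well-defined.

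First I would establish the dilation identity
$$\exp\!\left(\lambda x\frac{d}{dx}\right) g(x) = g(e^{\lambda}x)$$
for every $g(x)=\sum_{n\geq 0}a_n x^n$. This is immediate on monomials: $\theta x^n = n x^n$ gives $\theta^k x^n = n^k x^n$, whence $\exp(\lambda\theta)x^n=\sum_{k\geq 0}\frac{(\lambda n)^k}{k!}x^n=e^{n\lambda}x^n=(e^{\lambda}x)^n$, and the general case follows by linearity and $x$-adic continuity. In particular $\exp(\lambda\theta)$ acts as the substitution $x\mapsto e^{\lambda}x$, which is visibly a ring homomorphism (equivalently, $\theta$ is a derivation, so $\exp(\lambda\theta)$ is an algebra automorphism).

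The commutation with $\log$ is then formal. Write $h(x)=\log(f(x))$, well-defined because $f$ has constant term $1$ (as does $\Phi$). The defining relation $e^{h(x)}=f(x)$ may be pulled back along $x\mapsto e^{\lambda}x$; since substitution commutes with the exponential series, this yields $e^{h(e^{\lambda}x)}=f(e^{\lambda}x)$, hence $h(e^{\lambda}x)=\log(f(e^{\lambda}x))$. Applying the dilation identity to both $g=f$ and $g=h$ gives
$$\log\!\left(\exp(\lambda\theta)\,f\right)=\log\!\big(f(e^{\lambda}x)\big)=h(e^{\lambda}x)=\exp(\lambda\theta)\,\log(f),$$
which is the assertion.

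I expect no genuine obstacle here: the content is entirely formal, and the only care required is the bookkeeping of the previous paragraph (fixing the ring, checking convergence of $\exp(\lambda\theta)$, and recording that $\theta$ is a derivation so that its exponential respects products). The one pitfall to avoid is attempting to verify the identity by expanding $\log\!\big(1+D\gamma^{*}+\tfrac12(D^2\gamma^{*}+(D\gamma^{*})^2)+\cdots\big)$ coefficient by coefficient, which buries the simple substitution mechanism responsible for the ``remarkable simplification'' under a combinatorial identity among the $D^k\gamma^{*}$.
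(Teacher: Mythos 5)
Your proposal is correct and follows essentially the same route as the paper: both establish via the monomial computation $\theta^k x^n = n^k x^n$ that $\exp\left(\lambda x\frac{d}{dx}\right)$ is the substitution $x \mapsto e^{\lambda}x$, and then conclude that this substitution commutes with $\log$. Your write-up merely makes explicit the bookkeeping (ambient ring, convergence, the homomorphism property of substitution) that the paper compresses into ``The Proposition follows immediately.''
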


\begin{proof}
A simple computation for monomials in $x$ shows
$$\exp\left(\lambda x\frac{d}{dx}\right) \ x^k = (e^\lambda x)^k\  .$$
Hence, since the differential operator is additive,
$$\exp\left(\lambda x\frac{d}{dx}\right) \ f(x) = f(e^\lambda x)\ .$$
The Proposition follows immediately.
\end{proof}

We apply Proposition \ref{vwwv3} to $\log(\Phi^{\mathbf{p}})$.
The coefficients of the logarithm may be written as 
\begin{eqnarray*}
\log(\Phi^{\mathbf{p}}) & = &
\sum_\sigma \sum_{d=1}^\infty \sum_{r=-1}^\infty \widetilde{C}^d_{r}(\sigma) \ t^r \frac{x^d}{d!} {{\mathbf{p}}^{\sigma}}
\\
& = &
\sum_{d=1}^\infty \sum_{r=-1}^\infty \widetilde{C}^d_{r}\ t^r \frac{x^d}{d!}
\exp\left( \sum_{i=1}^\infty dp_i t^i\right)\\
& = & 
\sum_{\sigma} \sum_{d=1}^\infty \sum_{r=-1}^\infty 
\widetilde{C}^d_{r}\ t^r \frac{x^d}{d!}
\ \frac{d^{\ell(\sigma)} t^{|\sigma|}
{\mathbf{p}}^{\sigma}}
{|\text{Aut}(\sigma)|}
\ . 
\end{eqnarray*}
We have expressed the coefficients $\widetilde{C}^d_{r}(\sigma)$
of $\log(\Phi^{\mathbf{p}})$ solely in terms of the
coefficients
$\widetilde{C}^d_{r}$ of $\log(\Phi)$.

\subsubsection{Cutting classes}
Let $\theta_i\in A^1(U,\mathbb{Q})$ be the
class of the $i^{th}$ section of the universal curve 
\begin{equation}\label{gbbt}
\epsilon: U \rarr \mathcal{C}_{g}^d 
\end{equation}
 The class $s=c_1(S_U^*)$ on the universal curve over $Q_g(\mathbf{P}^1,d)$ 
restricted to the $\com^*$-fixed locus $\mathcal{C}^d_g / \mathbb{S}_d$
and pulled-back to 
\eqref{gbbt} yields 
$$s=\theta_1 + \ldots +\theta_d\ \in A^1(U,\mathbb{Q}).$$
We calculate 
\begin{equation}\label{cutt3}
\epsilon_*(s\ \omega^b)  =  {\psi}^b_1 +\ldots +{\psi}^b_d \ \ \in
A^b(\mathcal{C}_g^d,\mathbb{Q})\ .
\end{equation}

\subsubsection{Wick form}
We repeat the Wick analysis of Section \ref{pop} for the vanishings
\begin{equation*}
\nu_*\left(\prod_{i=1}^\ell\epsilon_*(s \omega^{b_i}) \cdot  0^c \cap [Q_g(\proj^1,d)]^{vir} \right) = 0 \ \
{\text {in}}\  A^*(\MM_g,\mathbb{Q}) 
\end{equation*} 
when $c>0$.
We start by writing a formula
for
\begin{equation*}
\sum_{d=1}^ \infty \sum_{r\geq 0} \pi^d_*\left(
\exp\Big(\sum_{i=1}^\infty p_it^i \epsilon_*(s\omega^i)  \Big) \cdot
c_r(\mathbb{F}_d) t^r\right)\ \frac{1}{t^d}\frac{x^d}{d!} \ .
\end{equation*}

Applying
the Wick formula 
to equation \eqref{cutt3} for the cutting classes, 
we see
\begin{equation}\label{mssx2}
\sum_{d=1}^ \infty \sum_{r\geq 0} \pi^d_*\left(
\exp\Big(\sum_{i=1}^\infty p_it^i \epsilon_*(s\omega^i)  \Big) \cdot
c_r(\mathbb{F}_d) t^r\right)\ \frac{1}{t^d}\frac{x^d}{d!}  = 
\exp(-\widetilde{\gamma}^{\, \text{\tiny{{\sf SQ}}}})
\end{equation}
where $\widetilde{\gamma}^{\, \text{\tiny{{\sf SQ}}}}$ is defined by
\begin{equation*}
\widetilde{\gamma}^{\, \text{\tiny{{\sf SQ}}}} = 
 \sum_{i\geq 1} \frac{B_{2i}}{2i(2i-1)} \kappa_{2i-1} t^{2i-1} 
+ 
\sum_{\sigma}
\sum_{d=1}^\infty \sum_{r=-1}^\infty \widetilde{C}^{d}_r(\sigma) 
\kappa_{r}\ t^r \frac{x^d}{d!} \ 
{{\mathbf{p}}^{\sigma}}
\ .
\end{equation*}
We follow here the notation of Section \ref{pdol},
$$\Phi^{\mathbf{p}}(t,x) = 
\sum_\sigma \sum_{d=0}^\infty  
\prod_{i=1}^d \frac{1}{1-it} \ 
\frac {(-1)^d}{d!}  \frac{x^d}{t^{d}}\  
\frac{d^{\ell(\sigma)} t^{|\sigma|}
{\mathbf{p}}^{\sigma}}
{|\text{Aut}(\sigma)|}
\ ,$$
$$ 
\log(\Phi^{\mathbf{p}}) = 
\sum_\sigma \sum_{d=1}^\infty \sum_{r=-1}^\infty \widetilde{C}^d_{r}(\sigma) \ t^r \frac{x^d}{d!} {{\mathbf{p}}^{\sigma}}
\ .$$
In the Wick analysis, the class $\epsilon_*(s \omega^b)$
simply acts as $dt^b$.

Using the expression for the coefficents $\widetilde{C}^d_r(\sigma)$
in terms of $\widetilde{C}^d_r$ derived in Section \ref{pdol},
we obtain the following result from \eqref{mssx2}.

\begin{Proposition} \label{mmnn3}
We have 
\begin{equation*}
\sum_{d=1}^ \infty \sum_{r\geq 0} \pi^d_*\left(
\exp\Big(\sum_{i=1}^\infty p_it^i \epsilon_*(s\omega^i)  \Big) \cdot
c_r(\mathbb{F}_d) t^r\right)\ \frac{1}{t^d}\frac{x^d}{d!}  = 
\exp(-{\gamma}^{\text{\tiny{{\sf SQ}}}}) \ .
\end{equation*}
\end{Proposition}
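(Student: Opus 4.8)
The plan is to observe that Proposition \ref{mmnn3} is only a notational rewriting of the Wick computation already carried out in \eqref{mssx2}. That equation identifies the left-hand side of the proposition with $\exp(-\widetilde{\gamma}^{\, \text{\tiny{{\sf SQ}}}})$, where $\widetilde{\gamma}^{\, \text{\tiny{{\sf SQ}}}}$ is written using the coefficients $\widetilde{C}^d_r(\sigma)$ of $\log(\Phi^{\mathbf{p}})$ with the $\kappa$-subscript equal to the $t$-exponent. Hence the entire content of the proposition reduces to verifying the single identity $\widetilde{\gamma}^{\, \text{\tiny{{\sf SQ}}}} = \gamma^{\text{\tiny{{\sf SQ}}}}$; once this is established, substituting into \eqref{mssx2} finishes the proof immediately.

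To prove that identity I would feed in the explicit relation between the two families of coefficients derived in Section \ref{pdol}. Comparing the two displayed expansions of $\log(\Phi^{\mathbf{p}})$ there and extracting the coefficient of $t^r \tfrac{x^d}{d!} \mathbf{p}^\sigma$ yields
$$\widetilde{C}^d_r(\sigma) = \widetilde{C}^d_{r-|\sigma|}\, \frac{d^{\ell(\sigma)}}{|\text{Aut}(\sigma)|}\ ,$$
the shift by $|\sigma|$ coming from the factor $t^{|\sigma|}$ on the right-hand side. Substituting this into the definition of $\widetilde{\gamma}^{\, \text{\tiny{{\sf SQ}}}}$ and then re-indexing the summation variable by $r \mapsto r + |\sigma|$ moves the $|\sigma|$ simultaneously into the $\kappa$-subscript and the $t$-exponent, converting $\widetilde{C}^d_r(\sigma)\,\kappa_r\, t^r$ into $\widetilde{C}^d_r\, \kappa_{r+|\sigma|}\, t^{r}\cdot \tfrac{d^{\ell(\sigma)} t^{|\sigma|}}{|\text{Aut}(\sigma)|}$. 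This is exactly the summand in the definition of $\gamma^{\text{\tiny{{\sf SQ}}}}$, and the Bernoulli prefactor $\sum_{i\geq 1}\tfrac{B_{2i}}{2i(2i-1)}\kappa_{2i-1}t^{2i-1}$ (the $\sigma=\emptyset$, $\kappa$-degree matching contribution) is common to both functions and untouched by the reindexing. Thus $\widetilde{\gamma}^{\, \text{\tiny{{\sf SQ}}}} = \gamma^{\text{\tiny{{\sf SQ}}}}$.

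There is no genuine geometric obstacle here: the hard work is entirely in \eqref{mssx2}, whose derivation rests on the Grothendieck--Riemann--Roch computation of $c(\mathbb{F}_d)$, the self-intersection formulas for the diagonals, and the Wick exponentiation of connected counts already assembled in Section \ref{pop}. The only place requiring care is the index bookkeeping, in particular the fact that $\widetilde{\gamma}^{\, \text{\tiny{{\sf SQ}}}}$ carries the $\kappa$-subscript $r$ (matching the $t$-power, as produced directly by the Wick analysis) while $\gamma^{\text{\tiny{{\sf SQ}}}}$ carries the subscript $r+|\sigma|$; one must check that the reindexing respects the lower summation bound $r \geq -1$ and is consistent with the eventual convention that the $\kappa_{-1}$ terms are set to $0$. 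Verifying that these boundary terms match on both sides is the one step where a sign or range error could creep in, but it is routine once the coefficient identity above is in hand.
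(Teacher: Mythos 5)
Your proposal is correct and is essentially the paper's own argument: the paper likewise deduces Proposition \ref{mmnn3} directly from the Wick identity \eqref{mssx2} combined with the Section \ref{pdol} relation $\widetilde{C}^d_r(\sigma) = \widetilde{C}^d_{r-|\sigma|}\, d^{\ell(\sigma)}/|\mathrm{Aut}(\sigma)|$, which is exactly your reduction to the identity $\widetilde{\gamma}^{\,\text{\tiny{{\sf SQ}}}} = \gamma^{\text{\tiny{{\sf SQ}}}}$. The boundary issue you flag is indeed harmless: the vanishing $\widetilde{C}^d_{r<-1}=0$ established in Section \ref{pop} ensures the reindexing $r \mapsto r+|\sigma|$ introduces no spurious terms.
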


\subsubsection{Geometric construction}
We apply $\com^*$-localization on $Q_g(\mathbf{P}^1,d)$
to the geometric vanishing
\begin{equation} \label{frad}
\nu_*\left(\prod_{i=1}^\ell\epsilon_*(s \omega^{b_i}) \cdot  0^c \cap [Q_g(\proj^1,d)]^{vir} \right)= 0\ \ 
\text{in} \  A^*(\MM_g,\mathbb{Q}) \ 
\end{equation}
when $c>0$.
The result is the relation
\begin{multline} \label{gred}
\pi_*\Big(
\prod_{i=1}^\ell \epsilon_*(s \omega^{b_i})\cdot c_{g-d-1+c}(\mathbb{F}_d) + \\
(-1)^{g-d-1} \Big[ \prod_{i=1}^\ell
\epsilon_*\left((s-1)\omega^{b_i}\right)
\cdot c_{-}(\mathbb{F}_d) \Big] ^{g-d-1+\sum_ib_i+c}
\Big) = 0
\end{multline}
in $R^*(\MM_g)$.
After applying the Wick formula of Proposition \ref{mmnn3}, we immediately
obtain Theorem \ref{mmnn}.

The first summand in \eqref{gred} yields the left side
$$\Big[ \exp(-\gamma^{\text{\tiny{{\sf SQ}}}}) \Big]_{t^rx^d\mathbf{p}^\sigma} $$
of the relation of Theorem \ref{mmnn}.
The second summand produces the right side
\begin{equation}\label{kedd}
(-1)^g\Big[ 
\exp(-\sum_{r=0}^\infty \kappa_r t^r p_{r+1})\cdot
\exp( -\widehat{\gamma}^{\, \text{\tiny{{\sf SQ}}}})
 \Big]_{t^rx^d\mathbf{p}^\sigma}\ .
\end{equation}
Recall the localization of the virtual class over $\infty \in \mathbf{P}^1$
is 
$$\frac{(-1)^{g-d-1}}{d!}\pi^d_*\Big[
 c_{-}(\mathbb{F}_d)\Big]^{g-d-1+c}\ .$$
Of the sign prefactor $(-1)^{g-d-1}$,
\begin{enumerate}
\item[$\bullet$] $(-1)^{-1}$ is used to move the
term to the right side, 
\item[$\bullet$]
$(-1)^{-d}$ is absorbed in the $(-t)$
of the definition of $\widehat{\gamma}^{\, \text{\tiny{{\sf SQ}}}}$,
\item[$\bullet$]
$(-1)^g$ remains in \eqref{kedd}.
\end{enumerate}
The $-1$ of $s-1$ produces the 
the factor $\exp(-\sum_{r=0}^\infty \kappa_r t^r p_{r+1})$.

Finally, a simple dimension calculation (remembering
$c>0$) implies the validity of the relation
 when $g-2d-1+|\sigma| < r$. \qed

\section{Analysis of the relations}
\label{LLL}

\subsection{Expanded form} \label{exf}

Let $\sigma=(1^{a_1}2^{a_2}3^{a_3} \ldots)$ be a partition
of length $\ell(\sigma)$ and size $|\sigma|$.
We can directly write the corresponding
tautological relation in $R^r(\MM_g)$
obtained from Theorem \ref{mmnn}.

A {\em subpartition} $\sigma'\subset \sigma$ is obtained
by selecting a nontrivial subset of the parts of $\sigma$.
A {\em division} of $\sigma$ is a disjoint union
\begin{equation}\label{rrgg}
\sigma = \sigma^{(1)} \cup \sigma^{(2)} \cup \sigma^{(3)}\ldots
\end{equation}
of subpartitions which exhausts $\sigma$.
The subpartitions in \eqref{rrgg} are unordered.
Let
$\mathcal{S}(\sigma)$ be the set of divisions of $\sigma$.
For example,
\begin{eqnarray*}
\mathcal{S}(1^12^1) &=& \{ \ (1^12^1),\ (1^1) \cup (2^1)\ \}\ , \\
\mathcal{S}(1^3) &=& \{\ (1^3), \ (1^2)\cup (1^1) \ \}\ .
\end{eqnarray*}

We will use the notation $\sigma^\bullet$ to
denote a division of $\sigma$ with subpartitions  $\sigma^{(i)}$.
Let
$$m(\sigma^\bullet) = \frac{1}{|\text{Aut}(\sigma^\bullet)|}
\frac{|\text{Aut}(\sigma)|}{\prod_{i=1}^{\ell(\sigma^\bullet)}
|\text{Aut}(\sigma^{(i)})|}.$$
Here, $\text{Aut}(\sigma^\bullet)$ is the group
permuting equal subpartitions.
The factor $m(\sigma^\bullet)$ may be interpreted
as counting the number of different ways the disjoint union
can be made.

To write explicitly the $\mathbf{p}^\sigma$ coefficient of
$\exp(\gamma^{\text{\tiny{{\sf SQ}}}})$, we introduce the functions 
$$F_{n,m}(t,x) = -\sum_{d=1}^\infty \sum_{s=-1}^\infty \widetilde{C}^d_{s}\ \kappa_{s+m} t^{s+m} 
\frac{d^n x^d}{d!}$$
for $n,m \geq 1$.
Then,
\begin{multline*}
|\text{Aut}(\sigma)| \cdot
\Big[ \exp(-\gamma^{\text{\tiny{{\sf SQ}}}}) \Big]_{t^rx^d\mathbf{p}^\sigma} =\\
\Big[ \exp(-\widetilde{\gamma}(t,x)) 
\cdot
\left(
\sum_{\sigma^\bullet\in \mathcal{S}(\sigma)}
m(\sigma^\bullet)
\prod_{i=1}^{\ell(\sigma^\bullet)}
F_{\ell(\sigma^{(i)}), |\sigma^{(i)}|} \right) 
\Big]_{t^rx^d} \ .
\end{multline*}

Let $\sigma^{*,\bullet}$ be a division of $\sigma$ with a 
marked subpartition,
\begin{equation}\label{rrggg}
\sigma = \sigma^* \cup \sigma^{(1)} \cup \sigma^{(2)} \cup \sigma^{(3)}\ldots,
\end{equation}
labelled by the superscript $*$. 
The marked subpartition is permitted to be empty. Let 
$\mathcal{S}^*(\sigma)$ denote the
set of marked divisions of $\sigma$.
Let
$$m(\sigma^{*,\bullet}) = \frac{1}{|\text{Aut}(\sigma^\bullet)|}
\frac{|\text{Aut}(\sigma)|}{|\text{Aut}(\sigma^*)|
\prod_{i=1}^{\ell(\sigma^{*,\bullet})}
|\text{Aut}(\sigma^{(i)})|}.$$
The length $\ell(\sigma^{*,\bullet})$ is the number
of unmarked subpartitions.

Then, $|\text{Aut}(\sigma)|$ 
times the right side of Theorem \ref{mmnn} may be written as
\begin{multline*}
(-1)^{g+|\sigma|} |\text{Aut}(\sigma)| \cdot
\Big[ \exp(-\widetilde{\gamma}(-t,x)) 
\cdot\\
\left(
\sum_{\sigma^{*,\bullet}\in \mathcal{S}^*(\sigma)}
m(\sigma^{*,\bullet})
 \prod_{j=1}^{\ell(\sigma^*)}
\kappa_{\sigma^*_{j}-1} (-t)^{\sigma^*_j-1}
\prod_{i=1}^{\ell(\sigma^{*,\bullet})}
F_{\ell(\sigma^{(i)}), |\sigma^{(i)}|} (-t,x)\right)
\Big]_{t^rx^d}
\end{multline*}

To write Theorem \ref{mmnn} in the simplest form, the following
definition using the Kronecker $\delta$ is useful,
$$m^\pm(\sigma^{*,\bullet}) = 
(1\pm\delta_{0,|\sigma^*|}) \cdot m(\sigma^{*,\bullet}).$$
There are two cases.
If $g\equiv r +|\sigma|$ mod 2, then Theorem 3 is equivalent to
the vanishing of
\begin{equation*} 
{\small{|\text{Aut}(\sigma)|}}\Big[ \exp(-\widetilde{\gamma}) 
\cdot
\left(
\sum_{\sigma^{*,\bullet}\in \mathcal{S}^*(\sigma)}
m^-(\sigma^{*,\bullet})
 \prod_{j=1}^{\ell(\sigma^*)}
\kappa_{\sigma^*_{j}-1} t^{\sigma^*_j-1}
\prod_{i=1}^{\ell(\sigma^{*,\bullet})}
F_{\ell(\sigma^{(i)}), |\sigma^{(i)}|} \right)
\Big]_{t^rx^d}  .
\end{equation*}
If $g\equiv r +|\sigma|+1$ mod 2, then Theorem \ref{mmnn} is equivalent
to the vanishing of
\begin{equation*}
{\small{|\text{Aut}(\sigma)|}}
\Big[ \exp(-\widetilde{\gamma}) 
\cdot
\left(
\sum_{\sigma^{*,\bullet}\in \mathcal{S}^*(\sigma)}
m^+(\sigma^{*,\bullet})
 \prod_{j=1}^{\ell(\sigma^*)}
\kappa_{\sigma^*_{j}-1} t^{\sigma^*_j-1}
\prod_{i=1}^{\ell(\sigma^{*,\bullet})}
F_{\ell(\sigma^{(i)}), |\sigma^{(i)}|} \right)
\Big]_{t^rx^d} .
\end{equation*}
In either case, the relations are valid
in the ring $R^*(\MM_g)$ only if the condition
$g-2d-1+|\sigma| < r$ holds.

We denote the above relation corresponding to $g$, $r$, $d$, and
$\sigma$ (and depending upon the
parity of $g-r-|\sigma|$) by
$$\mathsf{R}(g,r,d,\sigma)= 0$$ 
The  $|\text{Aut}(\sigma)|$ prefactor is included
in $\mathsf{R}(g,r,d,\sigma)$, but
is only relevant when $\sigma$ has repeated parts.
In case of repeated parts, the automorphism scaled normalization
is more convenient.

\subsection{Further examples}

If $\sigma=(k)$ has a single part, then the two cases
of Theorem \ref{mmnn} are the following.
If $g\equiv r+k$ mod 2,  we have
$$\Big[ \exp(-\widetilde{\gamma})\cdot \kappa_{k-1}t^{k-1} \Big]_{t^r x^d}=0\ $$
which is a consequence of the $\sigma=\emptyset$ case.
If $g\equiv r+k+1$ mod 2, we have 
$$\Big[ \exp(-\widetilde{\gamma})\cdot \left(\kappa_{k-1}t^{k-1}
+ 2 F_{1,k}\right) \Big]_{t^r x^d}=0$$

If $\sigma=(k_1k_2)$ has two distinct parts, then the two cases
of Theorem \ref{mmnn} are as follows.
If $g\equiv r+k_1+k_2$ mod 2,  we have
\begin{multline*}\Big[ \exp(-\widetilde{\gamma})\cdot \big(
\kappa_{k_1-1}\kappa_{k_2-1}t^{k_1+k_2-2}\\ +\kappa_{k_1-1}t^{k_1-1} F_{1,k_2}
+\kappa_{k_2-1} t^{k_2-1} F_{1,k_1}\big) 
\Big]_{t^r x^d}=0\ .
\end{multline*}
If $g\equiv r+k_1+k_2+1$ mod 2,  we have
\begin{multline*}\Big[ \exp(-\widetilde{\gamma})\cdot \big(
\kappa_{k_1-1}\kappa_{k_2-1}t^{k_1+k_2-2} +\kappa_{k_1-1} t^{k_1-1}F_{1,k_2}
\\+ \kappa_{k_2-1} t^{k_2-1}F_{1,k_1} 
+ 2 F_{2,k_1+k_2} + 2 F_{1,k_1} F_{1,k_2}
\big) 
\Big]_{t^r x^d}=0\ .
\end{multline*}

In fact, the $g\equiv r+k_1+k_2$ mod 2 equation above is not new.
The genus $g$ and codimension $r_1=r-k_2+1$ case of partition $(k_1)$
yields
$$\Big[ \exp(-\widetilde{\gamma})\cdot \left(\kappa_{k_1-1}t^{k_1-1}
+ 2 F_{1,k_1}\right) \Big]_{t^{r_1} x^d}=0\ .$$
After multiplication with $\kappa_{k_2-1}t^{k_2-1}$, we obtain
$$\Big[ \exp(-\widetilde{\gamma})
\cdot \left(\kappa_{k_1-1}\kappa_{k_2-1}t^{k_1+k_2-2}
+ 2 \kappa_{k_2-1}t^{k_2-1} F_{1,k_1}\right) \Big]_{t^{r} x^d}=0\ .$$
Summed with the corresponding equation with $k_1$ and $k_2$
interchanged yields the above 
$g\equiv r+k_1+k_2$ mod 2 case.

\label{vxvx}

\subsection{Expanded form revisited}

Consider the partition $\sigma=(k_1k_2\cdots k_\ell)$
with distinct parts. Relation $\mathsf{R}(g,r,d,\sigma)$, in
the $g\equiv r+|\sigma|$ mod 2 case, is  the
vanishing of 
\begin{multline*}
\Big[ \exp(-\widetilde{\gamma}) 
\cdot
\left(
\sum_{\sigma^{*,\bullet}\in \mathcal{S}^*(\sigma)}
(1-\delta_{0,|\sigma^*|})
 \prod_{j=1}^{\ell(\sigma^*)}
\kappa_{\sigma^*_{j}-1} t^{\sigma^*_j-1}
\prod_{i=1}^{\ell(\sigma^{*,\bullet})}
F_{\ell(\sigma^{(i)}), |\sigma^{(i)}|} \right)
\Big]_{t^rx^d}  
\end{multline*}
since all the factors $m(\sigma^{*,\bullet})$ are
1. In the 
$g\equiv r+|\sigma|+1$ mod 2 case, 
$\mathsf{R}(g,r,d,\sigma)$ is 
 the vanishing
of
\begin{multline*}
\Big[ \exp(-\widetilde{\gamma}) 
\cdot
\left(
\sum_{\sigma^{*,\bullet}\in \mathcal{S}^*(\sigma)}
(1+\delta_{0,|\sigma^*|})
 \prod_{j=1}^{\ell(\sigma^*)}
\kappa_{\sigma^*_{j}-1} t^{\sigma^*_j-1}
\prod_{i=1}^{\ell(\sigma^{*,\bullet})}
F_{\ell(\sigma^{(i)}), |\sigma^{(i)}|} \right)
\Big]_{t^rx^d}  
\end{multline*}
for the same reason.

If $\sigma$ has repeated parts, the relation $\mathsf{R}(g,r,d,\sigma)$
is obtained by viewing the parts as
distinct and specializing the indicies afterwards.
For example,
 the two cases
for $\sigma=(k^2)$
are as follows.
If $g\equiv r+2k$ mod 2,  we have
\begin{equation*}\Big[ \exp(-\widetilde{\gamma})\cdot \big(
\kappa_{k-1}\kappa_{k-1}t^{2k-2} +2\kappa_{k-1}t^{k-1} F_{1,k}\big) 
\Big]_{t^r x^d}=0\ .
\end{equation*}
If $g\equiv r+2k+1$ mod 2,  we have
\begin{multline*}\Big[ \exp(-\widetilde{\gamma})\cdot \big(
\kappa_{k-1}\kappa_{k-1}t^{2k-2} +2\kappa_{k-1} t^{k-1}F_{1,k}
\\ 
+ 2 F_{2,2k} + 2 F_{1,k} F_{1,k}
\big) 
\Big]_{t^r x^d}=0\ .
\end{multline*}
The factors occur via repetition of terms in
the formulas for distinct parts.

\begin{Proposition}
The relation $\mathsf{R}(g,r,d,\sigma)$
in the $g\equiv r+|\sigma|$ mod 2 case is a  consequence of
the relations in  $\mathsf{R}(g,r',d,\sigma')$ where
$g\equiv r'+|\sigma'|+1$ mod 2  and
$\sigma'\subset \sigma$ is a strictly smaller partition.
\end{Proposition}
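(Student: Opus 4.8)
The plan is to exhibit the even-parity relation $\mathsf{R}(g,r,d,\sigma)$ (the case $g\equiv r+|\sigma|$) as an explicit $\mathbb{Q}$-linear combination of $\kappa$-multiples of odd-parity relations $\mathsf{R}(g,r',d,\sigma\setminus\rho)$, where $\rho$ ranges over nonempty subsets of the parts of $\sigma$ and $\sigma\setminus\rho$ denotes the complementary subpartition. The starting observation is that the two expanded forms of $\mathsf{R}(g,r,d,\sigma)$ differ only through their weights $m^\mp=(1\mp\delta_{0,|\sigma^*|})m$: passing from the odd to the even form deletes exactly the marked divisions with empty marked subpartition $\sigma^*$. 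Moreover, multiplying the odd expression for $\sigma\setminus\rho$ by $\prod_{k\in\rho}\kappa_{k-1}t^{k-1}$ has the sole effect of adjoining the parts of $\rho$ to the marked subpartition, leaving the unmarked blocks $\sigma^{(i)}$ (hence the factors $F_{\ell(\sigma^{(i)}),|\sigma^{(i)}|}$) untouched. Following the device of Section \ref{exf}, I would first treat the parts of $\sigma$ as distinct and recover the repeated-part case at the end by specializing the indices.

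Writing $\mathcal{E}_\sigma$ and $\mathcal{O}_\sigma$ for the marked-division sums appearing inside $[\exp(-\widetilde{\gamma})\cdot(\,\cdot\,)]_{t^rx^d}$ in the even ($m^-$) and odd ($m^+$) forms, the key step is the combinatorial identity
\[
\mathcal{E}_\sigma=\sum_{\emptyset\neq\rho\subseteq\sigma}c_{\ell(\rho)}\Big(\prod_{k\in\rho}\kappa_{k-1}t^{k-1}\Big)\,\mathcal{O}_{\sigma\setminus\rho},
\]
with universal constants $c_s$ depending only on $\ell(\rho)$. For distinct parts all weights $m(\sigma^{*,\bullet})$ equal $1$, and comparing the coefficient of a fixed marked division with marked part $\sigma^*$ on both sides (noting that $\rho=\sigma^*$ is the only choice producing the doubled weight $1+\delta_{0,|\sigma^*\setminus\rho|}$) reduces the identity to the single requirement
\[
\sum_{j=1}^{s}\binom{s}{j}c_j+c_s=1\qquad\text{for all }s=\ell(\sigma^*)\geq 1.
\]
In terms of the exponential generating function $C(x)=\sum_{s\geq1}c_s x^s/s!$ this reads $(e^x+1)C(x)=e^x-1$, i.e. $C(x)=\tanh(x/2)$, which determines the $c_s$ uniquely.

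The decisive point — and the step I expect to be the crux — is that $\tanh(x/2)$ is an \emph{odd} function, so $c_s=0$ for every even $s$. Thus only subsets $\rho$ with $\ell(\rho)$ odd survive, and precisely for these the shifted codimension $r'=r-\sum_{k\in\rho}(k-1)$ satisfies $r'+|\sigma\setminus\rho|+1\equiv r+|\sigma|+\ell(\rho)+1\equiv g \pmod 2$; that is, each surviving term is genuinely an odd-parity relation $\mathsf{R}(g,r',d,\sigma\setminus\rho)$. Were the even $c_s$ nonzero the parities would clash and the reduction would fail, so the oddness of $\tanh$ is exactly what makes the argument work. Finally, since $\ell(\rho)\geq 1$, the inequality $g-2d-1+|\sigma|<r$ valid for the even relation gives $g-2d-1+|\sigma\setminus\rho|<r'$, so every odd relation invoked lies in its own range of validity. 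Extracting the $t^rx^d$ coefficient of $\exp(-\widetilde{\gamma})$ times the identity then expresses $\mathsf{R}(g,r,d,\sigma)$ in the even case as a combination of the odd relations $\mathsf{R}(g,r',d,\sigma\setminus\rho)$, and specializing repeated parts (which merely reinserts the automorphism factors tracked by the $|\text{Aut}(\sigma)|$ normalization) completes the proof.
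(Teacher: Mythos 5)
Your proof is correct and takes essentially the same route as the paper: the paper writes the even-parity relation as $\sum_{\tau}\bigl(\tfrac{2^{\ell(\tau)+2}-2}{\ell(\tau)+1}\bigr)B_{\ell(\tau)+1}\prod_i\kappa_{\tau_i-1}\,\mathsf{R}(g,r-|\tau|+\ell(\tau),d,\sigma/\tau)$ summed over odd-length subpartitions, and these coefficients are exactly your $c_s$, since the paper's generating function $A(x)=-2/(1+e^x)$ is your $\tanh(x/2)-1$ and its series relation $e^xA=-A-2$ is precisely your functional equation $(e^x+1)C=e^x-1$. The coefficient comparison on marked divisions, the parity and degree bookkeeping, and the reduction of repeated parts to the distinct-parts case all mirror the paper's argument.
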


\begin{proof}
The strategy follows the example of the phenonenon already discussed
in Section \ref{vxvx}. 

If $g\equiv r+|\sigma|$ mod 2, then
for every subpartition $\tau\subset \sigma$ of odd length, we have
$$g \equiv r- |\tau|+ \ell(\tau) + |\sigma/\tau| +1 \ \mod \ 2\ $$
where $\sigma/\tau$ is the complement of $\tau$.
The relation
$$\prod_i \kappa_{\tau_i-1} \cdot \mathsf{R}(g,r- |\tau|+ \ell(\tau), d,
\sigma/\tau)$$
is of codimension $r$.

Let $g\equiv r+|\sigma|$ mod 2, and let $\sigma$ have distinct parts. 
The formula
\begin{multline}\label{ggee}
\mathsf{R}(g,r,d,\sigma)= \\ \sum_{\tau\subset \sigma}
\left(\frac{2^{\ell(\tau)+2}-2}{\ell(\tau)+1}\right) B_{\ell(\tau)+1} \cdot 
\prod_i \kappa_{\tau_i-1} \cdot \mathsf{R}\Big(g,r- |\tau|+ \ell(\tau), d,
\sigma/\tau\Big)
\end{multline}
follows easily by grouping like terms and 
the Bernoulli identity
\begin{equation} \label{bpp}
\sum_{k\geq 1} \binom{n}{2k-1} \left(\frac{2^{2k+1} -2}{2k}\right) B_{2k}  = -
 \left(\frac{2^{n+2} -2}{n+1}\right) B_{n+1}\ 
\end{equation}
for $n>0$.
The sum in \eqref{ggee} is over all subpartitions $\tau\subset \sigma$
of odd length.

The proof of the Bernoulli identity \eqref{bpp} is straightforward.
Let
 $$ a_i = \left(\frac{2^{i+2} -2}{i+1}\right) B_{i+1}\ ,
\ \ \ A(x)=\sum_{i=0}^\infty a_i \frac{x^i}{i!} .$$
Using the definition of the Bernoulli numbers as
$$\frac{x}{e^x -1} = \sum_{i=0}^\infty B_i \frac{x^i}{i!}\ ,$$
we see
$$A(x) = \frac{2}{x} \sum_{i=0}^\infty (2^i-1) B_r
\frac{x^r}{r!} = \frac{2}{x}\left( \frac{2x}{e^{2x}-1} -
\frac{x}{e^x-1}\right)
 = -\left(\frac{2}{1+e^x}\right)\ .$$
The identity \eqref{bpp} follows from the series relation
$$e^x A(x)= - A(x)-2\ .$$

Formula \eqref{ggee} is valid 
for $\mathsf{R}(g,r,d,\sigma)$
 even when $\sigma$ has repeated parts:
the sum should be interpreted as running over all odd  
subsets $\tau\subset \sigma$ (viewing the parts of $\sigma$
as distinct).
\end{proof}

\subsection{Recasting}
We will recast the relations $\mathsf{R}(g,r,d,\sigma)$ in case
$g\equiv  r + |\sigma|+1$ mod 2 in a more convenient form. The result
will be crucial to the further analysis in Section \ref{trans}.

Let $g\equiv  r + |\sigma|+1$ mod 2, and  let
$\mathsf{S}(g,r,d,\sigma)$ denote the $\kappa$ polynomial
$$|\text{Aut}|\Big[ 
\exp\left(-\widetilde{\gamma}(t,x) +\sum_{\sigma \neq \emptyset} 
\Big( F_{\ell(\sigma),|\sigma|}+ \frac{\delta_{\ell(\sigma),1}}{2} \kappa_{|\sigma|-1}\Big) 
\frac{\mathbf{p}^\sigma}{|\text{Aut}(\sigma)|} \right) 
\Big]_{t^{r}x^d\mathbf{p}^\sigma}\ .$$
We can write $\mathsf{S}(g,r,d,\sigma)$ in terms of our previous
relations  $\mathsf{R}(g,r',d,\sigma')$ satisfying
$g\equiv r'+|\sigma'|+1$ mod 2 and $\sigma'\subset \sigma$:

If $g\equiv r+|\sigma|+1$ mod 2, then
for every subpartition $\tau\subset \sigma$ of even length
(including the case $\tau=\emptyset$), we have
$$g \equiv r- |\tau|+ \ell(\tau) + |\sigma/\tau| +1 \ \mod \ 2\ $$
where $\sigma/\tau$ is the complement of $\tau$.
The relation
$$\prod_i \kappa_{\tau_i-1} \cdot \mathsf{R}(g,r- |\tau|+ \ell(\tau), d,
\sigma/\tau)$$
is of codimension $r$.

In order to express $\mathsf{S}$ in terms of $\mathsf{R}$, we define
$z_i\in \mathbb{Q}$ by 
$$\frac{2}{e^x + e^{-x}} = \sum_{i=0}^\infty z_i \frac{x^i}{i!}\ .$$
Let $g\equiv r+|\sigma|+1$ mod 2, and let $\sigma$ have distinct parts. 
The formula
\begin{equation}\label{ggeee}
\mathsf{S}(g,r,d,\sigma)= \\ \sum_{\tau\subset \sigma}
\frac{z_{\ell(\tau)}}{2^{\ell(\tau)+1}} \cdot 
\prod_i \kappa_{\tau_i-1} \cdot \mathsf{R}\Big(g,r- |\tau|+ \ell(\tau), d,
\sigma/\tau\Big)
\end{equation}
follows again grouping like terms and 
the combinatorial identity
\begin{equation} \label{bppp}
\sum_{i\geq 0} \binom{n}{i} \frac{z_i}{2^{i}+1}    = -
 \frac{z_n}{2^{n+1}} - \frac{1}{2^n}
\end{equation}
for $n>0$.
The sum in \eqref{ggeee} is over all subpartitions $\tau\subset \sigma$
of even length.

As before, there the identity \eqref{bppp} is straightforward to prove.
We see
 $$ 
\ \ \ Z(x)=\sum_{i=0}^\infty \frac{z_i}{2^{i+1}} \frac{x^i}{i!} =
\frac{1}{e^{x/2} + e^{-x/2}}\ .$$
The identity \eqref{bppp} follows from the series relation
$$e^x Z(x)= e^{x/2} -Z(x) .$$
 
Formula \eqref{ggee} is valid 
for $\mathsf{S}(g,r,d,\sigma)$
 even when $\sigma$ has repeated parts:
the sum should be interpreted as running over all even
subsets $\tau\subset \sigma$ (viewing the parts of $\sigma$
as distinct). We have proved the following result.

\begin{Proposition} In $R^r(\MM_g)$, the relation \label{better}
\begin{equation*}
\Big[ 
\exp\left(-\widetilde{\gamma}(t,x) +\sum_{\sigma \neq \emptyset} 
\Big( F_{\ell(\sigma),|\sigma|}+ \frac{\delta_{\ell(\sigma),1}}{2} \kappa_{|\sigma|-1}\Big) 
\frac{\mathbf{p}^\sigma}{|\text{\em Aut}(\sigma)|} \right) 
\Big]_{t^{r}x^d\mathbf{p}^\sigma}  = 0 
\end{equation*}
holds when 
$g-2d-1 +|\sigma| < r$ and $g\equiv  r + |\sigma|+1$ mod 2.
\end{Proposition}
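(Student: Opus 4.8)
The plan is to recognize that $|\text{Aut}(\sigma)|$ times the bracketed expression in the statement is, by definition, precisely the $\kappa$-polynomial $\mathsf{S}(g,r,d,\sigma)$ introduced just above. Since $|\text{Aut}(\sigma)|\neq 0$, it therefore suffices to show $\mathsf{S}(g,r,d,\sigma)=0$ in $R^r(\MM_g)$ under the two stated hypotheses. The whole content then reduces to the reduction formula \eqref{ggeee}, which rewrites $\mathsf{S}$ as a $\kappa$-linear combination of the earlier relations $\mathsf{R}(g,r-|\tau|+\ell(\tau),d,\sigma/\tau)$ indexed by even-length subpartitions $\tau\subset\sigma$.

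First I would establish \eqref{ggeee} itself, which is where the real work lies. Expanding the symmetric exponential in the definition of $\mathsf{S}$ produces, for each $\mathbf p^\sigma$, a sum over all ways of partitioning the parts of $\sigma$ into blocks, each block contributing either a factor $F_{\ell,|\cdot|}$ or, for a singleton block, the correction $\tfrac12\kappa_{|\cdot|-1}$. I would compare this block expansion with the marked-division expansion defining $\mathsf{R}$ in the $g\equiv r'+|\sigma'|+1$ case, where a marked subpartition $\sigma^*$ contributes $\prod_j\kappa_{\sigma^*_j-1}t^{\sigma^*_j-1}$ together with the factor $m^+$. Grouping the terms of $\mathsf{S}$ according to which parts end up inside the marked pure-$\kappa$ piece, and summing over the possible sizes of that piece, produces exactly the coefficients $z_{\ell(\tau)}/2^{\ell(\tau)+1}$ once the combinatorial identity \eqref{bppp} is applied; the generating-function argument via $Z(x)=1/(e^{x/2}+e^{-x/2})$ and the relation $e^xZ(x)=e^{x/2}-Z(x)$ supplies \eqref{bppp}. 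For partitions with repeated parts I would argue by treating the parts as formally distinct and specializing afterwards, so that the sums run over all even subsets rather than genuine subpartitions.

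With \eqref{ggeee} in hand the conclusion is immediate. Under the hypothesis $g\equiv r+|\sigma|+1$ mod $2$, every summand has the correct parity: for $\tau$ of even length one computes $r'+|\sigma'|+1=(r-|\tau|+\ell(\tau))+(|\sigma|-|\tau|)+1\equiv r+|\sigma|+1\equiv g$ mod $2$, so each $\mathsf{R}(g,r',d,\sigma/\tau)$ is of the type $g\equiv r'+|\sigma'|+1$ furnished by Theorem \ref{mmnn}. Moreover the degree bound making that relation valid, namely $g-2d-1+|\sigma/\tau|<r'$, simplifies to $g-2d-1+|\sigma|<r+\ell(\tau)$, which follows from the hypothesis $g-2d-1+|\sigma|<r$ since $\ell(\tau)\geq 0$. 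Hence every $\mathsf{R}$ on the right of \eqref{ggeee} already holds in $R^*(\MM_g)$, so the $\kappa$-linear combination $\mathsf{S}(g,r,d,\sigma)$ vanishes there, and dividing by $|\text{Aut}(\sigma)|$ gives the Proposition.

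I expect the main obstacle to be the bookkeeping in \eqref{ggeee}: correctly matching the symmetric $F+\tfrac12\kappa$ packaging of $\mathsf{S}$ against the asymmetric marked-division packaging of $\mathsf{R}$, tracking the automorphism factors $m^\pm$ and the singleton correction $\delta_{\ell,1}$, and organizing the resulting combinatorial sum so that \eqref{bppp} applies cleanly, with the repeated-parts specialization the most delicate point. By contrast, the parity and degree verifications and the final vanishing argument are routine.
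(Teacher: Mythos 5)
Your proposal is correct and follows essentially the same route as the paper: it identifies the bracketed expression (times $|\mathrm{Aut}(\sigma)|$) with $\mathsf{S}(g,r,d,\sigma)$, establishes the reduction formula \eqref{ggeee} by matching the block expansion of $\mathsf{S}$ against the marked-division expansion of the relations $\mathsf{R}$ using the identity \eqref{bppp} proved via $Z(x)=1/(e^{x/2}+e^{-x/2})$ and $e^xZ(x)=e^{x/2}-Z(x)$, handles repeated parts by treating parts as formally distinct, and concludes with the parity and degree checks (which the paper leaves implicit but which you verify correctly).
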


\section{Transformation} \label{trans}
\subsection{Differential equations}

The function $\Phi$ satisfies a basic differential equation obtained
from the series definition:
$$\frac{d}{dx} (\Phi- tx \frac{d}{dx} \Phi) = -\frac{1}{t} \Phi \ .$$
After expanding and dividing by $\Phi$, we find
$$- tx \frac{\Phi_{xx}}{\Phi} - t \frac{\Phi_x}{\Phi}
+ \frac{\Phi_x}{\Phi} = -\frac{1}{t} \ $$
which can be written as
\begin{equation} \label{diffe}
-t^2x \gamma^*_{xx} = t^2 x (\gamma^*_x)^2 + t^2 \gamma^*_x 
-t \gamma^*_x -1 \ 
\end{equation}
where, as before, $\gamma^*=\log(\Phi)$.
Equation  \eqref{diffe} has been studied 
by Ionel in {\em Relations in the tautological ring} \cite{Ion}. We present
here  results of hers which will be useful for us.

To kill the pole and  match the required constant term,
 we will consider
the function
\begin{equation}\label{f4}
\Gamma=-t\left(\sum_{i\geq 1} \frac{B_{2i}}{2i(2i-1)}t^{2i-1} + \gamma^* 
\right)\ .
\end{equation}
The differential equation \eqref{diffe} becomes
$$tx \Gamma_{xx} = x (\Gamma_x)^2 +(1-t) \Gamma_x -1 \ .$$
The differential equation is
easily seen to  uniquely determine $\Gamma$ once
  the initial conditions
$$\Gamma(t,0) = - 
\sum_{i\geq 1} \frac{B_{2i}}{2i(2i-1)} t^{2i}$$
are specified. 
By Ionel's first result,
$$\Gamma_x = \frac{-1+\sqrt{1+4x}}{2x} + \frac{t}{1+4x}
+ \sum_{k=1}^\infty \sum_{j=0}^k t^{k+1} q_{k,j}(-x)^j(1+4x)^{-j-\frac{k}{2}-1}\  $$
where the postive integers $q_{k,j}$ (defined to vanish unless
$k\geq j \geq 0$) are defined via the recursion
$$q_{k,j} = (2k+4j-2)q_{k-1,j-1} + (j+1)q_{k-1,j}
+ \sum_{m=0}^{k-1} \sum_{l=0}^{j-1} q_{m,l} q_{k-1-m,j-1-l}\ $$
from the initial value $q_{0,0}=1$.

Ionel's second result is obtained by integrating $\Gamma_x$
with respect to $x$. She finds
$$\Gamma = \Gamma(0,x) + \frac{t}{4} \log(1+4x)
-\sum_{k=1}^\infty \sum_{j=0}^k t^{k+1} c_{k,j} (-x)^j (1+4x)^{-j-\frac{k}{2}}\ $$
where the coefficients $c_{k,j}$ are determined by 
$$q_{k,j}=(2k+4j)c_{k,j} +(j+1) c_{k,j+1}$$
for $k\geq 1$ and $k\geq j\geq 0$.

While the derivation of the formula for $\Gamma_x$ is straightforward,
the formula for $\Gamma$ is quite subtle as the intial conditions
(given by the Bernoulli numbers) are used to show the
vanishing of constants of integration. Said differently, the
recursions for $q_{k,j}$ and $c_{k,j}$ must be shown to imply the formula
$$c_{k,0} = \frac{B_{k+1}}{k(k+1)}\ .$$
A third result of Ionel's is the determination of the
extremal $c_{k,k}$,
$$\sum_{k=1}^\infty c_{k,k} z^k = \log\left( \sum_{k=1}^\infty
\frac{(6k)!}{(2k)!(3k)!} \left(\frac{z}{72}\right)^k \right)\ .$$

The formula for $\Gamma$ becomes simpler after the following
very natural change of variables,
\begin{equation}\label{varch}
u= \frac{t}{\sqrt{1+4x}} \ \ \ \text{and} \ \ \
y= \frac{-x}{1+4x} \ .
\end{equation}
The change of variables defines a new function
 $$\widehat{\Gamma}(u,y) = \Gamma(t,x) \ .$$
The formula for $\Gamma$ implies
\begin{equation}\label{f44}
\frac{1}{t} \widehat{\Gamma}(u,y) = \frac{1}{t} \widehat{\Gamma}(0,y)
-\frac{1}{4}\log(1+4y)
-\sum_{k=1}^\infty \sum_{j=0}^k  c_{k,j}u^k y^j  \ .
\end{equation}

Ionel's fourth result relates coefficients of series
after the change of variables \eqref{varch}.
Given any series
$$P(t,x) \in \mathbb{Q}[[t,x]],$$ let
$\widehat{P}(u,y)$ be the series obtained from
the change of variables \eqref{varch}. Ionel proves the
coefficient relation
$$\big[ P(t,x) \big]_{t^r x^d} = (-1)^d \big[
(1+4y)^{\frac{r+2d-2}{2}} \cdot \widehat{P}(u,y) \big]_{u^r y^d}\ .$$

\subsection{Analysis of the relations of Proposition \ref{vtw}}

We now study in detail the simple relations of Proposition \ref{vtw},
\begin{equation*}
\big[ \exp(-\widetilde{\gamma}) \big]_{t^rx^d} =0 \    \in R^r(\MM_g)
\end{equation*}
when $g-2d-1< r$ and 
$g\equiv r+1 \hspace{-5pt} \mod 2$.
Let 
$$\widehat{\gamma}(u,y) = \widetilde{\gamma}(t,x)$$
be obtained from the variable change \eqref{varch}.
Equations \eqref{f444}, \eqref{f4}, and \eqref{f44} together imply
\begin{equation*}
\widehat{\gamma}(u,y) = 
\frac{\kappa_0}{4}\log(1+4y)+
\sum_{k=1}^\infty \sum_{j=0}^k \kappa_k c_{k,j}u^k  y^j  \ 
\label{kakq}
\end{equation*}
modulo $\kappa_{-1}$ terms which we set to $0$.
 
Applying Ionel's coefficient result,
\begin{eqnarray*}
\big[ \exp(-\widetilde{\gamma}) \big]_{t^rx^d}& = & \big[ 
(1+4y)^{\frac{r+2d-2}{2}} \cdot
\exp(-\widehat{\gamma}) 
\big]_{u^r y^d} \\ & = &
\left[ 
(1+4y)^{\frac{r+2d-2}{2}-\frac{\kappa_0}{4}} \cdot
\exp(-     \sum_{k=1}^\infty \sum_{j=0}^k \kappa_k c_{k,j}u^k  y^j    ) 
\right]_{u^r y^d} \\
& = & 
\left[ 
(1+4y)^{\frac{r-g+2d-1}{2}} \cdot
\exp(-     \sum_{k=1}^\infty \sum_{j=0}^k \kappa_k c_{k,j}u^k y^j    ) 
\right]_{u^r y^d} \ .
\end{eqnarray*}
In the last line, the substitution $\kappa_0=2g-2$
has been made.

Consider first the exponent of $1+4y$.
By the assumptions on $g$ and $r$ in Proposition \ref{vtw},
$$\frac{r-g+2d-1}{2}\geq 0$$ 
and the fraction is integral. Hence, the $y$ degree of
the prefactor
$$(1+4y)^{\frac{r-g+2d-1}{2}}$$
is exactly $\frac{r-g+2d-1}{2}$.
The $y$ degree of the exponential factor is bounded
from above by the $u$ degree. We conclude
$$\left[ 
(1+4y)^{\frac{r-g+2d-1}{2}} \cdot
\exp(-     \sum_{k=1}^\infty \sum_{j=0}^k \kappa_k c_{k,j}u^k  y^j    ) 
\right]_{u^r y^d} =0$$
is the  {\em trivial} relation unless
$$r \geq d - {\frac{r-g+2d-1}{2}} = -\frac{r}{2} +\frac{g+1}{2} \ .$$
Rewriting the inequality, we obtain
$3r \geq g+1$
which is equivalent to $r > \lfloor \frac{g}{3} \rfloor$.
The conclusion is in agreement with the proven freeness of $R^*(\MM_g)$
up to (and including) degree $\lfloor \frac{g}{3} \rfloor$.

A similar connection between Proposition \ref{vtw} and Ionel's relations in
\cite{Ion} has also been found by Shengmao Zhu \cite{zhu}.

\subsection{Analysis of the relations of Theorem \ref{mmnn}}

For the relations of Theorem \ref{mmnn}, we will require additional
notation.  To start, 
let
$$\gamma^c(u,y) =      \sum_{k=1}^\infty \sum_{j=0}^k \kappa_k c_{k,j}u^k  y^j  \ .$$By Ionel's second result,
\begin{equation}\label{vvbb}
\frac{1}{t}\Gamma = \frac{1}{t} \Gamma(0,x) + \frac{1}{4} \log(1+4x)
-\sum_{k=1}^\infty \sum_{j=0}^k t^{k} c_{k,j} 
(-x)^j (1+4x)^{-j-\frac{k}{2}}\ .
\end{equation}
Let $c_{k,j}^0 = c_{k,j}$.
We  define the constants $c_{k,j}^n$ for $n\geq 1$ by
\begin{multline*}
\left( x \frac{d}{dx}\right)^n \frac{1}{t}\Gamma 
=\left( x \frac{d}{dx}\right)^{n-1} \left( \frac{-1}{2t} + \frac{1}{2t}\sqrt{1+4x}\right)\\ -
\sum_{k=0}^\infty \sum_{j=0}^{k+n} t^{k} 
c^n_{k,j} (-x)^j (1+4x)^{-j-\frac{k}{2}} 
\ . 
\end{multline*}

\begin{Lemma} \label{gqq2} For $n>0$, there are constants $b^n_j$ satisfying 
$$
\left( x \frac{d}{dx}\right)^{n-1} \left(\frac{1}{2t}\sqrt{1+4x}\right)
= \sum_{j=0}^{n-1} b^n_j u^{-1} y^j \ .
$$
Moreover, $b^n_{n-1} = -2^{n-2}\cdot(2n-5)!!$ where
$(-1)!!=1$ and $(-3)!!=-1$.
\end{Lemma}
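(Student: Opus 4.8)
The plan is to push the operator $x\frac{d}{dx}$ (taken at fixed $t$) through the change of variables \eqref{varch} and to strip off the common factor $u^{-1}$ by a gauge transformation, reducing everything to a polynomial recursion in $y$. First I would record the elementary consequences of \eqref{varch}. From $1+4y=(1+4x)^{-1}$ one gets $\frac{dy}{dx}=-(1+4y)^2$, and since $x=\frac{-y}{1+4y}$ this yields the operator identity
\[
x\frac{d}{dx}=y(1+4y)\frac{d}{dy}
\]
at fixed $t$. Moreover $\sqrt{1+4x}=(1+4y)^{-1/2}$, so the function to be differentiated is
\[
\frac{1}{2t}\sqrt{1+4x}=\tfrac12 u^{-1}=\frac{1}{2t}(1+4y)^{-1/2},
\]
and the factor $u^{-1}=\frac1t(1+4y)^{-1/2}$ is exactly the half-integer power that will sit out front in the final answer.

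Next I would introduce $L=y(1+4y)\frac{d}{dy}$ and define the gauge-transformed functions
\[
g_m(y)=(1+4y)^{1/2}\,L^m\big[(1+4y)^{-1/2}\big],\qquad g_0=1.
\]
Differentiating $(1+4y)^{-1/2}g_m$ and clearing the half-integer power gives the recursion
\[
g_{m+1}(y)=y(1+4y)\,g_m'(y)-2y\,g_m(y).
\]
A short induction then shows $g_m$ is a polynomial in $y$ of degree exactly $m$ (non-cancellation of the top term is confirmed by the leading-coefficient computation in the next paragraph). Translating back, $L^{n-1}[(1+4y)^{-1/2}]=(1+4y)^{-1/2}g_{n-1}(y)$, hence
\[
\Big(x\frac{d}{dx}\Big)^{n-1}\Big(\frac{1}{2t}\sqrt{1+4x}\Big)=\tfrac12\,u^{-1}g_{n-1}(y)=\sum_{j=0}^{n-1}b_j^n\,u^{-1}y^j,\qquad b_j^n=\tfrac12\,[g_{n-1}]_{y^j}.
\]
This establishes both the existence of the $b_j^n$ and that the top power is $y^{n-1}$.

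Finally I would track the leading coefficient $a_m$ of $g_m$. In the recursion the term $y(1+4y)g_m'$ contributes $4m\,a_m$ and $-2y\,g_m$ contributes $-2a_m$ to the coefficient of $y^{m+1}$, so $a_{m+1}=(4m-2)a_m$ with $a_0=1$; since $4m-2\neq0$ this also proves the non-cancellation used above. Solving the recursion gives $a_{n-1}=2^{n-1}\prod_{k=0}^{n-2}(2k-1)=-2^{n-1}(2n-5)!!$, so
\[
b_{n-1}^n=\tfrac12 a_{n-1}=-2^{n-2}(2n-5)!!,
\]
the conventions $(-1)!!=1$ and $(-3)!!=-1$ accounting exactly for the singleton and empty products in the cases $n=2$ and $n=1$.

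The only genuinely delicate point is the operator identity $x\frac{d}{dx}=y(1+4y)\frac{d}{dy}$ together with the careful bookkeeping of the half-integer powers of $1+4y$ in the gauge transformation; once these are in place the remainder is a routine induction, and the small-$n$ double-factorial conventions are forced precisely so that the closed form reproduces the values $b_0^1=\tfrac12$ and $b_1^2=-1$ coming directly from the recursion.
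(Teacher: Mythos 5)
Your proof is correct and takes essentially the same approach as the paper: the paper's own proof is a one-line appeal to ``simple induction'' (with the conventions $(-1)!!=1$ and $(-3)!!=-1$ attributed to the regularization), and your gauge-transformed recursion $g_{m+1}=y(1+4y)\,g_m'-2y\,g_m$ with the leading-coefficient relation $a_{m+1}=(4m-2)a_m$ is exactly that induction carried out in full detail.
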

\begin{proof}
The result is obtained by simple induction.
The negative evaluations $(-1)!!=1$ and $(-3)!!=-1$ arise from the
$\Gamma$-regularization. 
\end{proof}

\begin{Lemma}\label{ggrr} For $n>0$, we have $c_{0,n}^n = 4^{n-1}(n-1)!$.
\end{Lemma}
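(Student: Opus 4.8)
The plan is to extract a recursion for the constants $c^{n}_{k,j}$ straight from their defining relation and then to track the single extremal value $c^{n}_{0,n}$. The whole argument rests on one elementary computation: how $x\frac{d}{dx}$ acts on the building blocks $(-x)^{j}(1+4x)^{-j-\frac{k}{2}}$ that appear in \eqref{vvbb} and in the definition of the $c^{n}_{k,j}$. Differentiating and reabsorbing the extra factor of $x$ via $x=-(-x)$, I would obtain
\[
x\frac{d}{dx}\Big[(-x)^{j}(1+4x)^{-j-\frac{k}{2}}\Big]
= j\,(-x)^{j}(1+4x)^{-j-\frac{k}{2}}
+ 4\Big(j+\tfrac{k}{2}\Big)(-x)^{j+1}(1+4x)^{-(j+1)-\frac{k}{2}} .
\]
The essential point is that the shape $(-x)^{j'}(1+4x)^{-j'-k/2}$ is preserved: at fixed $k$, the first term stays at index $j$ and the second raises the index to $j+1$.

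Applying $x\frac{d}{dx}$ to the expansion of $\left(x\frac{d}{dx}\right)^{n-1}\frac{1}{t}\Gamma$ supplied by the definition of $c^{n-1}_{k,j}$, and comparing with the definition of $c^{n}_{k,j}$, I would read off the recursion
\[
c^{n}_{k,j} = j\,c^{n-1}_{k,j} + 4\Big(j-1+\tfrac{k}{2}\Big)\,c^{n-1}_{k,j-1},
\qquad n\geq 2 .
\]
Here one must check that the separated term $\left(x\frac{d}{dx}\right)^{n-1}\big(\frac{-1}{2t}+\frac{1}{2t}\sqrt{1+4x}\big)$ arising from $\frac{1}{t}\Gamma(0,x)$ contributes nothing to the coefficients $c^{n}_{k,j}$: by Lemma \ref{gqq2} it is a finite combination of $u^{-1}y^{j}$, so it carries an overall factor $\frac{1}{t}$ and lives entirely in the $t^{-1}$ part, never meeting the $t^{k}$ terms with $k\geq 0$. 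Now I specialize to $k=0$ and $j=n$. Since the $k=0$ sum at level $n-1$ runs only over $0\leq j\leq n-1$, we have $c^{n-1}_{0,n}=0$, and the recursion collapses to
\[
c^{n}_{0,n} = 4(n-1)\,c^{n-1}_{0,n-1}.
\]

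It remains to seed and solve this. For $n=1$ the previous level is $\frac{1}{t}\Gamma$ itself, whose anomalous $t^{0}$ piece is $\frac{1}{4}\log(1+4x)$; differentiating gives $x\frac{d}{dx}\frac{1}{4}\log(1+4x)=\frac{x}{1+4x}=-(-x)(1+4x)^{-1}$, and matching the $k=0$, $j=1$ term yields $c^{1}_{0,1}=1=4^{0}\cdot 0!$. Iterating $c^{n}_{0,n}=4(n-1)c^{n-1}_{0,n-1}$ then gives $c^{n}_{0,n}=4^{n-1}(n-1)!$, as claimed. The only genuinely delicate step is the bookkeeping at $n=1$: the $\log$ term sits outside the uniform family $(-x)^{j}(1+4x)^{-j-k/2}$ and is precisely what seeds the recursion, while for $n\geq 2$ it has already been differentiated into standard terms. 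Once that boundary contribution is correctly identified and the $\Gamma(0,x)$-part is confirmed to be inert at $t^{0}$, the remainder is a one-line induction.
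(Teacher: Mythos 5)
Your proof is correct and follows essentially the same route as the paper: the paper's (one-line) proof observes that $c^n_{0,j}$ comes directly from applying $\left(x\frac{d}{dx}\right)^n$ to the $t^0$ summand $\frac{1}{4}\log(1+4x)$ of \eqref{vvbb}, and your argument is exactly this computation made explicit --- the seed $c^1_{0,1}=1$ from the log term plus the extremal recursion $c^n_{0,n}=4(n-1)\,c^{n-1}_{0,n-1}$ coming from the action of $x\frac{d}{dx}$ on $(-x)^j(1+4x)^{-j}$.
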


\begin{proof}
The coefficients $c_{0,n}^n$ are obtained directly from the $t^0$
summand $\frac{1}{4} \log(1+4x)$ of \eqref{vvbb}.
\end{proof}

\begin{Lemma} For $n>0$ and $k>0$, we have 
$$c_{k,k+n}^n =  (6k)(6k+4)\cdots (6k+4(n-1))\ 
c_{k,k}.$$
\label{gqq3}
\end{Lemma}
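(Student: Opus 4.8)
The plan is to show that the operator $x\frac{d}{dx}$ acts \emph{triangularly} on the natural building blocks $(-x)^j(1+4x)^{-j-k/2}$ that appear in the expansions defining $c^n_{k,j}$: it preserves the index $k$ and raises $j$ by at most one. From this I extract a one-step recursion and then specialize it to the extremal index $j=k+n$.

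First I would pass to the variables $(u,y)$ of \eqref{varch}. Since $1+4y=(1+4x)^{-1}$, the basic block transforms as $t^k(-x)^j(1+4x)^{-j-k/2}=u^ky^j$, so the defining identity for $c^n_{k,j}$ becomes, in the $(u,y)$ variables,
$$\left(x\frac{d}{dx}\right)^n \frac{1}{t}\Gamma = \big(\text{terms in } u^{-1}\big) - \sum_{k\geq 0}\sum_{j} c^n_{k,j}\, u^k y^j,$$
where the $u^{-1}$ terms collect the square-root contributions controlled by Lemma \ref{gqq2}. A direct computation of $x u_x$ and $x y_x$ from \eqref{varch} (using $1+4y=(1+4x)^{-1}$) gives $x u_x = 2yu$ and $xy_x=y(1+4y)$, hence the operator in the new variables is
$$x\frac{d}{dx} = 2yu\,\frac{\partial}{\partial u} + y(1+4y)\frac{\partial}{\partial y}, \qquad x\frac{d}{dx}(u^k y^j) = j\,u^k y^j + (2k+4j)\,u^k y^{j+1}.$$

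The key point is that this action preserves $k$, so applying $x\frac{d}{dx}$ to the displayed identity and comparing coefficients of $u^ky^j$ for $k\geq 0$ yields the recursion
$$c^{n+1}_{k,j} = j\,c^n_{k,j} + (2k+4j-4)\,c^n_{k,j-1}.$$
Setting $j=k+n+1$ and using the vanishing $c^n_{k,k+n+1}=0$ (the sum in the definition of $c^n_{k,j}$ runs only to $j=k+n$), the first term drops and I obtain
$$c^{n+1}_{k,k+n+1} = (6k+4n)\,c^n_{k,k+n}.$$
Writing $e_n=c^n_{k,k+n}$ for fixed $k>0$, this reads $e_{n+1}=(6k+4n)e_n$ with $e_0=c^0_{k,k}=c_{k,k}$, and iterating gives $e_n=(6k)(6k+4)\cdots(6k+4(n-1))\,c_{k,k}$, which is the asserted formula.

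The only genuinely delicate step is justifying that the square-root (i.e. $u^{-1}$) part of the expansion never feeds into the coefficients $c^n_{k,j}$ with $k\geq 0$ under repeated application of $x\frac{d}{dx}$. This holds because $x\frac{d}{dx}$ sends $u^{-1}y^j$ to $-2u^{-1}y^{j+1}+j\,u^{-1}y^j+4j\,u^{-1}y^{j+1}$, which stays within the span of $\{u^{-1}y^i\}$; thus the two parts of the expansion evolve independently and the coefficient recursion above is exact. Everything else is the bookkeeping of a single coefficient recursion, and I would present it as a short induction on $n$.
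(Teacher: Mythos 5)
Your proof is correct and is essentially the paper's own argument made explicit: the paper's ``inspection'' is precisely your observation that $x\frac{d}{dx}$ acts triangularly on the blocks $(-x)^j(1+4x)^{-j-k/2}$ (equivalently $u^k y^j$), so the extremal coefficient $c^n_{k,k+n}$ can only arise by having every application of the operator attack the $(1+4x)$ power, picking up the factors $2k+4j$ for $j=k,\ldots,k+n-1$, i.e.\ $(6k)(6k+4)\cdots(6k+4(n-1))$. Your passage to $(u,y)$ coordinates and the explicit recursion $c^{n+1}_{k,j}=j\,c^n_{k,j}+(2k+4j-4)\,c^n_{k,j-1}$, together with the check that the $u^{-1}$ (square-root) part never mixes in, are a clean formalization of exactly this computation.
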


\begin{proof}
The coefficients $c^n_{k,k+n}$ are extremal. The differential
operators $x \frac{d}{dx}$ must always attack the $(1+4x)^{-j-\frac{k}{2}}$
in order to contribute $c^n_{k,k+n}$. The formula follows
by inspection.
\end{proof}

Consider next the full set of equations given by Theorem \ref{mmnn}
in the expanded form of Section \ref{LLL}.
The function $F_{n,m}$ may be rewritten as
\begin{eqnarray*}
F_{n,m}(t,x) & =& 
- \sum_{d=1}^\infty \sum_{s=-1}^\infty \widetilde{C}^d_{s}\ \kappa_{s+m} t^{s+m} 
\frac{d^n x^d}{d!} \\
& = & -t^m \left(x\frac{d}{dx}\right)^n 
\sum_{d=1}^\infty \sum_{s=-1}^\infty \widetilde{C}^d_{s}\ \kappa_{s+m} t^{s} 
\frac{x^d}{d!}.
\end{eqnarray*}
We may write the result in terms of the constants $b^n_j$ and $c^n_{k,j}$,
\begin{multline*}
t^{-(m-n)}F_{n,m} = -\delta_{n,1}\frac{\kappa_{m-1}}{2} \\ 
+
(1+4y)^{-\frac{n}{2}} \Big( \sum_{j=0}^{n-1}\kappa_{m-1}b_j^n u^{n-1} y^j
 -
\sum_{k=0}^\infty \sum_{j=0}^{k+n}  \kappa_{k+m}  c^n_{k,j} u^{k+n} y^j \Big)
\end{multline*}
Define the functions
$G_{n,m}(u,y)$ by
$$G_{n,m}(u,y) = \sum_{j=0}^{n-1}\kappa_{m-1}b_j^n u^{n-1} y^j
 -
\sum_{k=0}^\infty \sum_{j=0}^{k+n}  \kappa_{k+m}  c^n_{k,j} u^{k+n} y^j \ .$$

Let $\sigma=(1^{a_1}2^{a_2}3^{a_3} \ldots)$ be a partition
of length $\ell(\sigma)$ and size $|\sigma|$.
We assume the parity condition 
\begin{equation}\label{par2}
g\equiv  r + |\sigma|+1\ .
\end{equation}
Let $G_\sigma^\pm(u,y)$ be the following function associated to $\sigma$,
$$G_\sigma^\pm(u,y) =
\sum_{\sigma^{\bullet}\in \mathcal{S}(\sigma)}
\prod_{i=1}^{\ell(\sigma^{\bullet})}
\left(G_{\ell(\sigma^{(i)}), |\sigma^{(i)}|} \pm
\frac{\delta_{\ell(\sigma^{(i)}),1}}{2} 
\sqrt{1+4y}\ 
\kappa_{|\sigma^{(i)}|-1}\right)\ .$$
The relations of Theorem \ref{mmnn} in the
the expanded form of Section \ref{exf} written in the variables $u$ and $y$ are
\begin{equation*}
\Big[ (1+4y)^{\frac{r-|\sigma|-g+2d-1}{2}}
\exp(-\gamma^c) 
\left( G_\sigma^+ + G_\sigma^-\right)
\Big]_{u^{r-|\sigma|+\ell(\sigma)}y^d}  = 0 
\end{equation*}
In fact, the relations of Proposition \ref{better} here take a much more
efficient form. We obtain the following result.


\begin{Proposition} In $R^r(\MM_g)$, the relation  \label{midb}
\begin{equation*}
\Big[ (1+4y)^{\frac{r-|\sigma|-g+2d-1}{2}}
\exp\left(-\gamma^c -\sum_{\sigma \neq \emptyset} G_{\ell(\sigma),|\sigma|} 
\frac{\mathbf{p}^\sigma}{|\text{\em Aut}(\sigma)|} \right) 
\Big]_{u^{r-|\sigma|+\ell(\sigma)}y^d\mathbf{p}^\sigma}  = 0 
\end{equation*}
holds when 
$g-2d-1 +|\sigma| < r$ and $g\equiv  r + |\sigma|+1$ mod 2.
\end{Proposition}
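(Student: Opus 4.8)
The plan is to obtain Proposition~\ref{midb} from Proposition~\ref{better} by pushing the latter through the change of variables \eqref{varch} together with Ionel's coefficient formula, since passing from $(t,x)$ to $(u,y)$ is the only new ingredient and leaves the variables $\mathbf p$ untouched. First I would fix the partition $\sigma$ and extract the $\mathbf p^\sigma$-coefficient of the series in Proposition~\ref{better}, turning it into a vanishing $[Q_\sigma]_{t^rx^d}=0$ for an explicit $Q_\sigma(t,x)\in\mathbb Q[\kappa][[t,x]]$; after imposing $\kappa_{-1}=0$ the apparent simple pole in $t$ drops out and $Q_\sigma$ is a genuine power series, so Ionel's coefficient result applies coefficientwise in the $\kappa_i$ and gives, up to the harmless global sign $(-1)^d$,
\[
\big[(1+4y)^{\frac{r+2d-2}{2}}\,\widehat Q_\sigma\big]_{u^ry^d}=0 .
\]
Everything then reduces to computing $\widehat Q_\sigma$.

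For the Hodge contribution I would substitute the identity $\widehat{\widetilde\gamma}=\tfrac{\kappa_0}{4}\log(1+4y)+\gamma^c$ recorded in the analysis of Proposition~\ref{vtw}; together with $\kappa_0=2g-2$ this merges $\exp(-\widetilde\gamma)$ with Ionel's prefactor into $(1+4y)^{\frac{r-g+2d-1}{2}}\exp(-\gamma^c)$. For each cutting factor I would feed in the stated expansion $t^{-(m-n)}F_{n,m}=-\delta_{n,1}\tfrac{\kappa_{m-1}}{2}+(1+4y)^{-n/2}G_{n,m}$ and the relation $t^{m-n}=u^{m-n}(1+4y)^{-(m-n)/2}$. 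The decisive point is the block identity
\[
\widehat F_{n,m}+\big(\text{the correction term}\big)=u^{m-n}(1+4y)^{-m/2}\,G_{n,m},
\]
in which the $\tfrac{\delta_{n,1}}{2}\kappa_{m-1}$ term carried along in Proposition~\ref{better} is exactly what cancels the $\delta_{n,1}$-residue $-\tfrac{\kappa_{m-1}}{2}$, leaving a clean multiple of $G_{n,m}$.

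With every block clean, I would observe that the weight $u^{m-n}(1+4y)^{-m/2}$ attached to a block of size $m=|\sigma^{(i)}|$ and length $n=\ell(\sigma^{(i)})$ is multiplicative over the parts of $\sigma$: for any division of $\sigma$ the weights multiply to the division-independent factor $u^{|\sigma|-\ell(\sigma)}(1+4y)^{-|\sigma|/2}$. Hence this factor pulls out of the entire $\mathbf p^\sigma$-coefficient, or equivalently $\widehat Q_\sigma$ is obtained from the clean series $\exp\!\big(-\gamma^c-\sum_{\sigma\neq\emptyset}G_{\ell(\sigma),|\sigma|}\mathbf p^\sigma/|\text{Aut}(\sigma)|\big)$ by the single rescaling $p_\lambda\mapsto u^{\lambda-1}(1+4y)^{-\lambda/2}p_\lambda$. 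This shifts the extracted $u$-degree from $r$ to $r-|\sigma|+\ell(\sigma)$ and lowers the prefactor exponent by $|\sigma|/2$, converting $\tfrac{r-g+2d-1}{2}$ into $\tfrac{r-|\sigma|-g+2d-1}{2}$; the outcome is precisely the relation of Proposition~\ref{midb}, with the hypotheses $g-2d-1+|\sigma|<r$ and $g\equiv r+|\sigma|+1\bmod 2$ inherited verbatim from Proposition~\ref{better}.

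The step I expect to demand the most care is the block identity: using Lemmas~\ref{gqq2}--\ref{gqq3}, and in particular the value $b^1_0=\tfrac12$, one must confirm that the correction annihilates the entire $\sqrt{1+4y}\,\kappa_{m-1}$ contribution and that no stray $y$-power or lower $\kappa$-term survives, so that the weights factor uniformly over all divisions. Once this cancellation, and the attendant sign and $|\text{Aut}|$ bookkeeping that matches the expansion of the clean exponential to the division sum, are settled, the passage to the shifted prefactor of Proposition~\ref{midb} is routine, the parity hypothesis entering only to make $\tfrac{r-|\sigma|-g+2d-1}{2}$ an integer, exactly as in the treatment of Proposition~\ref{vtw}.
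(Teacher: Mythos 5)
Your proposal is correct and is essentially the paper's own (largely implicit) proof: Proposition \ref{midb} is obtained there exactly by pushing Proposition \ref{better} through Ionel's coefficient result, with the paper's displayed expansion of $t^{-(m-n)}F_{n,m}$ together with Lemmas \ref{gqq2}--\ref{gqq3} playing the role of your block identity $F_{n,m}+\tfrac{\delta_{n,1}}{2}\kappa_{m-1}t^{m-1}=u^{m-n}(1+4y)^{-m/2}G_{n,m}$, the identity $\widehat{\widetilde{\gamma}}=\tfrac{\kappa_0}{4}\log(1+4y)+\gamma^c$ merging the Hodge part into the prefactor, and the division-independent weight $u^{|\sigma|-\ell(\sigma)}(1+4y)^{-|\sigma|/2}$ accounting for the shifted extraction degree $u^{r-|\sigma|+\ell(\sigma)}$ and the exponent $\tfrac{r-|\sigma|-g+2d-1}{2}$. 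The only discrepancy is a sign: your derivation yields $\exp\big(-\gamma^c+\sum G_{\ell(\sigma),|\sigma|}\,\mathbf{p}^\sigma/|\text{Aut}(\sigma)|\big)$, which is the form the paper itself uses (as $\mathsf{E}_\delta$) in the proof of Proposition \ref{best}, so the minus sign in front of the $G$'s as printed in Proposition \ref{midb} is a typographical slip in the paper rather than a gap in your argument.
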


Consider the  exponent of $1+4y$.
By the inequality and the  parity condition \eqref{par2},
$$\frac{r-|\sigma|-g+2d-1}{2}\geq 0$$ 
and the fraction is integral. Hence, the $y$ degree of
the prefactor
\begin{equation}\label{xcx3}
(1+4y)^{\frac{r-|\sigma|-g+2d-1}{2}}
\end{equation}
is exactly $\frac{r-|\sigma|-g+2d-1}{2}$.
The $y$ degree of the exponential factor is bounded
from above by the $u$ degree. We conclude
the relation of Theorem 4 is
 {\em trivial} unless
$$r-|\sigma|+\ell(\sigma) \geq d - {\frac{r-|\sigma|-g+2d-1}{2}} 
= -\frac{r-|\sigma|}{2} +\frac{g+1}{2} \ .$$
Rewriting the inequality, we obtain
$$3r \geq g+1 + 3|\sigma|-2\ell(\sigma)$$
which is consistent with the proven freeness of $R^*(\MM_g)$
up to (and including) degree $\lfloor \frac{g}{3} \rfloor$.

\subsection{Another form}

A subset of the equations of Proposition \ref{midb} admits an especially simple
description.
Consider the function
\begin{multline*}
H_{n,m}(u) =  2^{n-2}(2n-5)!! \ \kappa_{m-1} u^{n-1} 
 + 4^{n-1}(n-1)!\ \kappa_m u^n \\
+ \sum_{k=1}^\infty  (6k)(6k+4)\cdots (6k+4(n-1)) c_{k,k}\   
\kappa_{k+m}
u^{k+n} \ .
\end{multline*}

\begin{Proposition} In $R^r(\MM_g)$, the relation \label{best}
\begin{equation*}
\Big[ 
\exp\left(-\sum_{k=1}^\infty c_{k,k} \kappa_k u^k 
-\sum_{\sigma \neq \emptyset} H_{\ell(\sigma),|\sigma|} 
\frac{\mathbf{p}^\sigma}{|\text{\em Aut}(\sigma)|} \right) 
\Big]_{u^{r-|\sigma|+\ell(\sigma)}\mathbf{p}^\sigma}  = 0 
\end{equation*}
holds when 
$3r \geq g+1 + 3|\sigma|-2\ell(\sigma)$
 and $g\equiv  r + |\sigma|+1$ mod 2.
\end{Proposition}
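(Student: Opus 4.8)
The plan is to deduce Proposition \ref{best} from Proposition \ref{midb} by eliminating both the integer $d$ and the variable $y$, keeping only the top-degree-in-$y$ part of each relation. Fix $g$, $r$, and $\sigma$ with $g \equiv r + |\sigma| + 1 \bmod 2$, and set $N = r - |\sigma| + \ell(\sigma)$ and $d_0 = \tfrac{1}{2}(g+1+|\sigma|-r)$. The parity hypothesis makes $d_0$ an integer, the prefactor exponent in Proposition \ref{midb} equals $\tfrac{1}{2}(r-|\sigma|-g+2d-1) = d - d_0$, and the inequality $g-2d-1+|\sigma| < r$ there is exactly $d \ge d_0$. So for our fixed data Proposition \ref{midb} supplies one relation for \emph{every} integer $d \ge d_0$, all extracting the same $u^N\mathbf{p}^\sigma$ coefficient while varying the power $d$ of $y$ and the prefactor $(1+4y)^{d-d_0}$.

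First I would isolate the role of the extremal part. Every monomial occurring in $\gamma^c$ and in each $G_{\ell(\sigma),|\sigma|}$ has $y$-degree at most its $u$-degree, so the diagonal operator $\mathrm{ext}$ that retains the coefficient of $u^a y^a$ and records it as $u^a$ is a ring homomorphism on these series: if $b\le a$ and $b'\le a'$ with $b+b'=a+a'$, then $b=a$ and $b'=a'$, killing all cross terms. Hence $\mathrm{ext}(\exp S) = \exp(\mathrm{ext}\,S)$ for $S = -\gamma^c - \sum_{\sigma\ne\emptyset} G_{\ell(\sigma),|\sigma|}\,\mathbf{p}^\sigma/|\mathrm{Aut}(\sigma)|$. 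The content of Lemmas \ref{gqq2}, \ref{ggrr}, and \ref{gqq3} is precisely that $\mathrm{ext}$ carries these functions to those of Proposition \ref{best}: the extremal coefficients $b^n_{n-1}$, $c^n_{0,n}$, and $c^n_{k,k+n}$ assemble (with the signs and constants fixed by the lemmas) into $H_{n,m}(u)$, while the diagonal of $\gamma^c$ is $\sum_k c_{k,k}\kappa_k u^k$. Consequently $\exp(\mathrm{ext}\,S)$ is the integrand of Proposition \ref{best}, and the quantity it extracts is $a_N := [u^N y^N\mathbf{p}^\sigma]\exp S$, the top coefficient of the polynomial $\mathcal{E}(y) := [u^N\mathbf{p}^\sigma]\exp S$, which has degree $\le N$ in $y$.

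It then remains to prove $a_N = 0$ under the hypothesis $3r \ge g+1+3|\sigma|-2\ell(\sigma)$, which is exactly $N \ge d_0$. Writing $\mathcal{E}(y) = \sum_{b=0}^N a_b y^b$, Proposition \ref{midb} for each $d \ge d_0$ reads
$$\big[(1+4y)^{\,d-d_0}\mathcal{E}(y)\big]_{y^d} = \sum_{b=0}^N a_b\, 4^{\,d-b}\binom{d-d_0}{d-b} = 0 .$$
I would package these into a single generating series in an auxiliary variable $s$, obtaining
$$\sum_{d\ge d_0}\Big(\sum_{b=0}^N a_b\,4^{\,d-b}\binom{d-d_0}{d-b}\Big)s^d = \sum_{b=d_0}^N a_b\,\frac{s^b}{(1-4s)^{\,b-d_0+1}} = 0 ,$$
where the terms with $b < d_0$ drop out automatically. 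As a rational function of $s$, the summand indexed by $b$ has a pole at $s = \tfrac14$ of order exactly $b - d_0 + 1$, and these orders are pairwise distinct for $d_0 \le b \le N$; since the coefficients $a_b$ lie in $R^*(\MM_g)$, linear independence of distinct-order polar parts forces $a_b = 0$ for all such $b$. In particular $a_N = 0$ in $R^r(\MM_g)$, which is Proposition \ref{best}.

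The step I expect to be most delicate is not the generating-series manipulation — routine once one checks that $d - d_0$ is a nonnegative integer, guaranteed by the parity and inequality hypotheses, so the binomial prefactor is an honest polynomial in $y$ — but the identification in Lemmas \ref{gqq2}–\ref{gqq3} of $H_{n,m}$ with the extremal $y$-part of $G_{n,m}$, carried out together with the sign and $|\mathrm{Aut}(\sigma)|$ bookkeeping across all divisions $\sigma^\bullet$ of $\sigma$ in the expanded form of Section \ref{exf}. It is exactly this bookkeeping that lets the homomorphism identity $\mathrm{ext}(\exp S) = \exp(\mathrm{ext}\,S)$ collapse the cumbersome sum over divisions into the single compact exponential of Proposition \ref{best}.
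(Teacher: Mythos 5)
Your proposal is correct and takes essentially the same route as the paper's own proof: the diagonal-extraction step (the ring-homomorphism property of taking $u^ay^a$-coefficients together with Lemmas \ref{gqq2}, \ref{ggrr}, \ref{gqq3} identifying $H_{n,m}$ with the extremal part of $G_{n,m}$) and the variation of $d\geq d_0$ in Proposition \ref{midb} are exactly the paper's argument, with your $a_{N-\delta}$ being the paper's $\mathsf{E}_\delta$ and your hypothesis $N\geq d_0$ being the paper's $\Delta\geq 0$. The only difference is how the resulting linear system is solved --- the paper inverts the triangular system $\sum_{i=0}^{\widehat{\delta}} 4^i\binom{\widehat{\delta}}{i}\mathsf{E}_{\Delta-\widehat{\delta}+i}=0$ for $0\leq\widehat{\delta}\leq\Delta$, while you sum the relations over all $d\geq d_0$ into a generating function in $s$ and invoke distinct pole orders at $s=\tfrac{1}{4}$ --- which is a cosmetic repackaging of the same inversion (and the sign discrepancy you inherit on the $G$-sum is the paper's own internal typo between Proposition \ref{midb} and its use in the proof).
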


\begin{proof}
Let $g\equiv  r + |\sigma|+1$, and let
$$ \frac{3}{2}r -\frac{1}{2}g - \frac{1}{2}-\frac{3}{2} |\sigma| + \ell(\sigma) = \Delta >0\ .$$
By the parity condition, $\delta$ is an integer.
For $0 \leq \delta \leq \Delta$, let
$$\mathsf{E}_\delta(g,r,\sigma) = 
\Big[ 
\exp\left(-\gamma^c +\sum_{\sigma \neq \emptyset} G_{\ell(\sigma),|\sigma|} 
\frac{\mathbf{p}^\sigma}{|\text{Aut}(\sigma)|} \right) 
\Big]_{u^{r-|\sigma|+\ell(\sigma)}y^{r-|\sigma|+\ell(\sigma)-\delta}\mathbf{p}^\sigma} \ .
$$

The $\delta=0$ case is special.  
Only the monomials of $G_{n,m}$ of equal $u$ and $y$
degree contribute to the relations of Proposition \ref{midb}.
By Lemmas \ref{gqq2} - \ref{gqq3}, $H_{u,m}(uy)$ is exactly 
the subsum of $G_{n,m}$ consisting of such monomials.
Similarly,
$$\sum_{k=1}^\infty c_{k,k} \kappa_k u^ky^k $$
is the subsum of $\gamma^c$ of monomials of equal $u$ and $y$ degree.
Hence,
\begin{multline*}
\mathsf{E}_0(g,r,\sigma) = \\ 
\Big[ 
\exp\left(-\sum_{k=1}^\infty c_{k,k} \kappa_k u^ky^k 
-\sum_{\sigma \neq \emptyset} H_{\ell(\sigma),|\sigma|}(uy) 
\frac{\mathbf{p}^\sigma}{|\text{Aut}(\sigma)|} \right) 
\Big]_{(uy)^{r-|\sigma|+\ell(\sigma)}\mathbf{p}^\sigma} = \\
\Big[ 
\exp\left(-\sum_{k=1}^\infty c_{k,k} \kappa_k u^k 
-\sum_{\sigma \neq \emptyset} H_{\ell(\sigma),|\sigma|}(u) 
\frac{\mathbf{p}^\sigma}{|\text{Aut}(\sigma)|} \right) 
\Big]_{u^{r-|\sigma|+\ell(\sigma)}\mathbf{p}^\sigma} \ .
\end{multline*}

We consider the relations of Proposition \ref{midb} for
fixed $g$, $r$, and $\sigma$ as $d$ varies.
In order to satisfy the inequalty
$g-2d-1 +|\sigma| < r$, let
$$d(\widehat{\delta}) = \frac{-r+g +1+ |\sigma|}{2}+ \widehat{\delta}\ , \ \ \ \text{for} \ \ 
\widehat{\delta}\geq 0 .$$ 
For $0 \leq \widehat{\delta} \leq \Delta$, relation of
Proposition \ref{midb} for $g$, $r$, $\sigma$, and $d(\widehat{\delta})$
 is 
$$\sum_{i=0}^{\widehat{\delta}}  4^i \binom{\widehat{\delta}}{i}
\cdot \mathsf{E}_{\Delta-\widehat{\delta}+i}(g,r,\sigma) = 0\ .$$
As $\widehat{\delta}$ varies, we therefore obtain all the relations
\begin{equation}\label{lxx3}
\mathsf{E}_{\delta}(g,r,\sigma) = 0 \ 
\end{equation} 
for $0\leq \delta \leq \Delta$.
The relations of Proposition \ref{best} are obtained when $\delta=0$
in \eqref{lxx3}.
\end{proof}

The main advantage of Proposition \ref{best} is the dependence
on only the function
\begin{equation}\label{njjk}
\sum_{k=1}^\infty c_{k,k} z^k = \log\left( \sum_{k=1}^\infty
\frac{(6k)!}{(2k)!(3k)!} \left(\frac{z}{72}\right)^k \right)\ .
\end{equation}
Proposition \ref{best} only provides finitely many relations
for fixed $g$ and $r$.
In Section \ref{pppp},
we show Proposition \ref{best} is equivalent to the Faber-Zagier
conjecture. 

\subsection{Relations left behind}
In our analysis of relations obtained from the
virtual geometry of the moduli space of stable quotients, twice we have discarded
large sets of relations.
In Section \ref{exrel}, instead of analyzing all of the geometric
possibilities
\begin{equation*}
\nu_*\left(\prod_{i=1}^n\epsilon_*(s^{a_i} \omega^{b_i}) \cdot  0^c \cap [Q_g(\proj^1,d)]^{vir} \right)= 0\ \ 
\text{in} \  A^*(\MM_g,\mathbb{Q}) \ ,
\end{equation*}
we restricted ourselves to the case where $a_i=1$ for all $i$.
And just now, instead of keeping all the relations \eqref{lxx3}, we
restricted ourselves to the ${\delta=0}$ cases.

In both instances, the restricted set was chosen to allow further
analysis. In spite of the discarding, we will arrive at the
Faber-Zagier relations. We expect the discarded relations are all
redundant (consistent with Conjecture 2), but we do not have a proof.

\section{Equivalence}
\label{pppp}

\subsection{Notation}
The relations in Proposition~\ref{best} have a similar flavor to the Faber-Zagier relations. We start with formal series related  to
\[
A(z) = \sum_{i=0}^\infty \frac{(6i)!}{(3i)!(2i)!}\left(\frac{z}{72}\right)^i,
\]
we insert classes $\kappa_r$, we 
exponentiate, and we extract  coefficients to obtain 
relations among the $\kappa$ classes. 
In order to make the similarities clearer, 
we will introduce additional notation. 

If $F$ is a formal power series in $z$,
\[
F = \sum_{r=0}^\infty c_rz^r
\]
with coefficients in a ring, let
\[
\{F\}_\kappa = \sum_{r=0}^\infty c_r\kappa_rz^r
\]
be the series with $\kappa$-classes inserted.

Let $A$ be as above, and let
\[
B(z) = \sum_{i=0}^\infty \frac{(6i)!}{(3i)!(2i)!}\frac{6i+1}{6i-1}\left(\frac{z}{72}\right)^i
\]
be the second power series appearing in the Faber-Zagier relations.
Let
$$C = \frac{B}{A}\ ,$$ 
and let 
$$E = \exp(-\{\log(A)\}_\kappa) = \exp\left(-\sum_{k=1}^\infty c_{k,k}\kappa_kz^k\right).$$
We will rewrite the Faber-Zagier relations and the relations of Proposition~\ref{best} in terms of $C$ and $E$.
The equivalence between the two will rely on the principal differential equation
satisfied by $C$,
\begin{equation}\label{diffeq}
12z^2\frac{dC}{dz} = 1 + 4zC - C^2.
\end{equation}

\subsection{Rewriting the relations}
The relations conjectured by Faber and Zagier
are straightforward to rewrite using the above notation:
\begin{multline}\label{FZ0}
\Bigg[E\cdot\exp\Big(-\Big\{\log\big(1+p_3z+p_6z^2+\cdots\\
+C(p_1+p_4z+p_7z^2+\cdots)\big)\Big\}_\kappa\Big)\Bigg]_{z^rp^\sigma} = 0
\end{multline}
for $3r \ge g+|\sigma|+1$ and $3r\equiv g+|\sigma|+1$ mod $2$.
The above relation \eqref{FZ0} will be denoted $\FZ(r,\sigma)$.

The stable quotient relations of Proposition~\ref{best} are more complicated 
to rewrite in terms of $C$ and $E$. Define a sequence of power series $(C_n)_{n\ge 1}$ by
\begin{multline*}
2^{-n}C_n = 2^{n-2}(2n-5)!!z^{n-1} + 4^{n-1}(n-1)!z^n \\
+ \sum_{k=1}^\infty (6k)(6k+4)\cdots(6k+4(n-1))c_{k,k}z^{k+n}.
\end{multline*}
We see
$$H_{n,m}(z) = 2^{-n}z^{n-m}\{z^{m-n}C_n\}_\kappa.$$
 The series $C_n$ satisfy
\begin{equation}\label{Crecur}
C_1 = C, \ \ \ \ C_{i+1} = \left(12z^2\frac{d}{dz}-4iz\right)C_i.
\end{equation}
Using the differential equation \eqref{diffeq},
 each $C_n$ can be expressed as a polynomial in $C$ and $z$:
\[
C_1 = C, \ \ C_2 = 1-C^2,\ \  C_3 = -8z-2C+2C^3, \ldots, \ .
\]

Proposition~\ref{best} can then
be rewritten as follows (after an appropriate change of variables):
\begin{equation}\label{SQ}
\left[E\cdot\exp\left(-\sum_{\sigma\ne\emptyset}\{z^{|\sigma|-\ell(\sigma)}C_{\ell(\sigma)}\}_\kappa\frac{p^\sigma}{|\Aut(\sigma)|}\right)\right]_{z^rp^\sigma} = 0
\end{equation}
for $3r \ge g+3|\sigma|-2\ell(\sigma)+1$ and $3r \equiv g+3|\sigma|-2\ell(\sigma)+1$ mod $2$.
The above relation \eqref{SQ} will be denoted $\SQ(r,\sigma)$.

The $\FZ$ and $\SQ$
relations  now look much more similar, but the relations in (\ref{FZ0}) are 
indexed by partitions with no parts of size $2$ mod $3$ and
satisfy a slightly different inequality.
The indexing differences can be erased 
by observing that the variables $p_{3k}$ are actually not necessary in (\ref{FZ0}) 
if we are just interested in the \emph{ideal} generated by a set of relations 
(rather than the linear span).
This observation follows from the identity
\[
-\FZ(r,\sigma \sqcup 3a) = \kappa_a\FZ(r-a,\sigma) + \sum_\tau \FZ(r, \tau),
\]
where the sum runs over the $\ell(\sigma)$ partitions $\tau$ (possibly repeated) formed by increasing one of the parts of $\sigma$ by $3a$.

If we remove the variables $p_{3k}$
and reindex the others by replacing $p_{3k+1}$ with $p_{k+1}$, 
we obtain the following equivalent form of the $\FZ$ relations:
\begin{equation}\label{FZ}
\Big[E\cdot\exp\big(-\big\{\log(1+C(p_1+p_2z+p_3z^2+\cdots))\big\}_\kappa\big)\Big]_{z^rp^\sigma} = 0
\end{equation}
for $3r \ge g+3|\sigma|-2\ell(\sigma)+1$ and $3r \equiv g+3|\sigma|-2\ell(\sigma)+1$ mod $2$.

\subsection{Comparing the relations}

We now explain how to write the $\SQ$ relations (\ref{SQ}) as linear combinations of the 
$\FZ$ relations (\ref{FZ}) with coefficients in $\Q[\kappa_0,\kappa_1,\kappa_2,\ldots]$.
In fact,
the associated matrix will be triangular with diagonal entries equal to $1$.  

We start with further notation.
For a partition $\sigma$, let
\[
\FZ_\sigma = \left[\exp\left(-\left\{\log(1+C(p_1+p_2z+p_3z^2+\cdots))\right\}_\kappa\right)\right]_{p^\sigma}
\]
and
\[
\SQ_\sigma = \left[\exp\left(-\sum_{\sigma\ne\emptyset}\{z^{|\sigma|-\ell(\sigma)}C_{\ell(\sigma)}\}_\kappa\frac{p^\sigma}{|\Aut(\sigma)|}\right)\right]_{p^\sigma}
\]
be power series in $z$ with coefficients that are polynomials in the $\kappa$ classes. 
The relations themselves are given by 
$$\FZ(r,\sigma)=[E\cdot\FZ_\sigma]_{z^r}\ , \ \ \ \SQ(r,\sigma)=
[E\cdot\SQ_\sigma]_{z^r}\ .$$

It is straightforward to expand $\FZ_\sigma$ and $\SQ_\sigma$ as linear combinations of products of factors $\{z^a C^b\}$ for $a\ge 0$ and $b\ge 1$, with coefficients that are polynomials in the kappa classes.
When expanded, $\FZ_\sigma$ always contains exactly one term of the form 
\begin{equation}\label{g66h}
\{z^{a_1}C\}_\kappa\{z^{a_2}C\}_\kappa\cdots\{z^{a_m}C\}_\kappa\ . 
\end{equation}
All the other terms involve higher powers of $C$. 
If we expand $\SQ_\sigma$, 
we can look at the terms of the form \eqref{g66h} to determine what the coefficients must be 
when writing the $\SQ_\sigma$ as linear combinations of the $\FZ_\sigma$. For example,
\begin{align*}
\SQ_{(111)} &= -\frac{1}{6}\{C_3\}_\kappa + \frac{1}{2}\{C_2\}_\kappa\{C_1\}_\kappa -\frac{1}{6}\{C_1\}_\kappa^3 \\
&= \frac{4}{3}\kappa_1z + \frac{1}{3}\{C\}_\kappa - \frac{1}{3}\{C^3\}_\kappa + \frac{1}{2}(\kappa_0 - \{C^2\}_\kappa)\{C\}_\kappa - \frac{1}{6}\{C\}_\kappa^3 \\
&= \left(\frac{4}{3}\kappa_1z\right) + \left(\left(\frac{1}{3} + \frac{\kappa_0}{2}\right)\{C\}_\kappa\right)\\
& \ \ \ \ \ \ \ \ \ \ \ \  + \left(-\frac{1}{3}\{C^3\}_\kappa -\frac{1}{2}\{C^2\}_\kappa\{C\}_\kappa -\frac{1}{6}\{C\}_\kappa^3\right) \\
&= \frac{4}{3}\kappa_1z\FZ_\emptyset + \left(-\frac{1}{3} - \frac{\kappa_0}{2}\right)\FZ_{(1)} + \FZ_{(111)}.
\end{align*}

In general we must check that the terms involving higher powers of $C$ also match up.
The matching will require an identity between the coefficients of $C_i$ when expressed as polynomials in $C$.
Define polynomials $f_{ij}\in\Z[z]$ by
\[
C_i = \sum_{j=0}^if_{ij}C^j.
\]
It will also be convenient to write $f_{ij} = \sum_{k} f_{ijk}z^k$, so
\[
C_i = \sum_{\substack{ j,k\ge 0 \\ j + 3k \le i }} f_{ijk}z^kC^j.
\]
If we define
\[
F = 1+\sum_{i,j\ge 1}\frac{(-1)^{j-1}f_{ij}}{i!(j-1)!}x^iy^j \in \Q[z][[x,y]],
\]
then we will need a single property of the power series $F$.
\begin{Lemma}\label{exponential}
There exists a power series $G\in\Q[z][[x]]$ such that $F = e^{yG}$.
\end{Lemma}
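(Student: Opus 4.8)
The plan is to show $F = e^{yG}$ by proving that $\log F$ is linear in $y$, i.e.\ that $\log F$ has no terms of $y$-degree $\ge 2$. Writing $F = e^{yG}$ for some $G\in\Q[z][[x]]$ is equivalent to the statement that $\partial_y\log F$ is independent of $y$, so the whole content of the lemma is an identity among the coefficients $f_{ij}$. I would first translate the claim into a generating-function identity. Since the $y$-coefficients of $F$ are assembled from the $f_{ij}$ with the signs $(-1)^{j-1}/(i!(j-1)!)$, it is natural to set up an auxiliary two-variable series that encodes the polynomials $C_i = \sum_j f_{ij}C^j$ directly. Concretely, consider the exponential generating function in $x$ of the $C_i$ themselves,
\[
\Xi(x) = \sum_{i\ge 1} C_i\,\frac{x^i}{i!},
\]
whose coefficients are genuine power series in $C$ and $z$. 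The recursion \eqref{Crecur}, $C_{i+1} = \left(12z^2\frac{d}{dz} - 4iz\right)C_i$, is the key structural input: it says that passing from $C_i$ to $C_{i+1}$ is applying a first-order differential operator whose $i$-dependence is the simple affine term $-4iz$. This should translate into a clean linear ODE for $\Xi(x)$ in the variable $x$.

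**Next I would** exploit the differential equation \eqref{diffeq}, $12z^2\,dC/dz = 1 + 4zC - C^2$, to rewrite the operator $12z^2\frac{d}{dz}$ acting on any polynomial in $C$ purely in terms of $C$ (eliminating explicit $z$-derivatives). Because each $C_i$ is a polynomial in $C$ and $z$, applying $12z^2\frac{d}{dz}$ produces $\frac{dC_i}{dC}\cdot(1+4zC - C^2)$ plus the explicit $z$-derivative of the coefficients; combined with the $-4iz\,C_i$ term, the recursion becomes an operator acting on the $C$-dependence alone. The hoped-for outcome is that $\Xi$ satisfies an equation of the form $\partial_x \Xi = \Lambda\cdot(1 + \text{(linear in }\Xi))$ for a $C$- and $z$-dependent but $x$-independent operator $\Lambda$, which upon solving exhibits $\Xi$, and hence $F$, with the exponential-in-$y$ structure claimed. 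The sign pattern $(-1)^{j-1}$ and the $1/(j-1)!$ in the definition of $F$ are precisely the bookkeeping that converts ``$C_i$ is a polynomial in $C$'' into ``$F$ is a function of $y\cdot(\text{something})$,'' so I expect the variable $y$ to be the formal exponential variable dual to the power $C^j$, shifted by one because of the $(j-1)!$.

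**The main obstacle** I anticipate is matching the two distinct combinatorial normalizations: the $f_{ij}$ enter $F$ with the weight $\frac{(-1)^{j-1}}{i!(j-1)!}$, which mixes an exponential generating function in $i$ (the $1/i!$) with an almost-exponential one in $j$ (the $1/(j-1)!$ with the shift). Converting the recursion \eqref{Crecur}, which is phrased additively in $i$, into a statement about this mixed generating function requires care: I would need to verify that the operator $12z^2\frac{d}{dz} - 4iz$, after reduction modulo \eqref{diffeq}, acts on the $C^j$-expansion in a way compatible with the $\frac{1}{(j-1)!}$ weighting so that the resulting series in $y$ is genuinely of the form $e^{yG}$ rather than merely a power series with a distinguished linear term. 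The cleanest route is probably to prove directly that $\partial_y \log F$ is $y$-free by differentiating the generating-function identity for $F$ and using the ODE for $\Xi$ to collapse the higher-$y$ contributions; establishing that collapse is the crux, and I expect it to reduce to a short verification once the correct differential equation for $\Xi$ is in hand.
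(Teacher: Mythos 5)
Your overall orientation is correct---the lemma is equivalent to the statement that $\log F$ is linear in $y$, and the only available inputs are the recursion \eqref{Crecur} and the Riccati equation \eqref{diffeq}---but your proposal stops exactly at the step that constitutes the proof. The paper's argument has three concrete stages: (a) expand \eqref{Crecur} using \eqref{diffeq} to get a recurrence for the coefficient polynomials, of the shape $f_{i+1,j} = (j+1)f_{i,j+1} + 4(j-i)z f_{ij} - (j-1)f_{i,j-1}$, together with the term $12z^2\frac{d}{dz}f_{ij}$ coming from differentiating the coefficients (a term you correctly anticipate; as printed, the paper's recurrence and PDE omit it, but including it changes nothing below); (b) observe that this coefficient recurrence is \emph{equivalent} to a partial differential equation for $F$ itself, $F_x = -yF_{yy} + 4zyF_y - 4zxF_x + yF$ (plus $12z^2F_z$); and (c) set $G = \frac{1}{y}\log F$, rewrite the PDE as an equation for $G$, and prove by induction on the power of $x$ that every coefficient of $x^ky^l$ with $l\ge 1$ in $G$ vanishes, the base case coming from $F = 1 + O(xy)$. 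Stage (c) is where the $y$-linearity is actually established, and nothing in your proposal substitutes for it.

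Your intermediate object $\Xi(x)=\sum_{i\ge 1} C_i x^i/i!$ does satisfy a clean linear equation, namely $(1+4zx)\,\partial_x\Xi = C + \delta(\Xi)$ where $\delta = 12z^2\partial_z + (1+4zC-C^2)\partial_C$, but two things go wrong afterwards. First, ``solving'' this linear equation cannot by itself ``exhibit the exponential-in-$y$ structure'': that structure is the assertion that the $C$-degree-$j$ part of each $C_i$ is determined by the $C$-degree-$1$ parts through the exponential formula---identity \eqref{l399} of the paper---and a solution formula for a linear ODE carries no such multiplicative information about a filtration. Second, and this is the obstacle you yourself flag, the passage from $\Xi$ to $F$ is the substitution $C^j \mapsto \frac{(-1)^{j-1}}{(j-1)!}y^j$, which is linear but not a ring homomorphism; transporting the equation for $\Xi$ across this substitution is precisely the coefficient bookkeeping of stages (a)--(b), so deferring it as ``a short verification'' leaves the entire proof outstanding. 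In short: right strategy, correct identification of the crux, but the crux itself---the PDE satisfied by $F$ (equivalently by $G$) and the induction that kills all higher powers of $y$---is missing from the proposal.
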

\begin{proof}
The recurrence \eqref{Crecur} for the $C_i$ together with the differential equation \eqref{diffeq} satisfied by $C$ yield a recurrence relation for the polynomials $f_{ij}$:
\[
f_{i+1, j} = (j+1)f_{i, j+1} + 4(j-i)zf_{ij} - (j-1)f_{i, j-1}.
\]
This recurrence relation for the coefficients of $F$ is equivalent to a differential equation:
\[
F_x = -yF_{yy} + 4zyF_y - 4zxF_x + yF.
\]

Now, let $G\in\Q[z][[x,y]]$ be $\frac{1}{y}$ times the logarithm of $f$ (as a formal power series). The differential equation for $F$ can be rewritten in terms of $G$:
\[
G_x = -2G_y - yG_{yy}-(G + yG_y)^2 + 4z(G + yG_y) - 4zxG_x + 1.
\]
We now claim that the coefficient of $x^ky^l$ in $G$ is zero for all $k\ge 0, l\ge 1$, as desired. For $k = 0$ this is a consequence of the fact that $F = 1 + O(xy)$ and thus $G = O(x)$, and higher values of $k$ follow from induction using the differential equation above.
\end{proof}

We can now write the $\SQ_\sigma$ as linear combinations of the $\FZ_\sigma$.

\begin{Theorem}\label{combination}
Let $\sigma$ be a partition. Then $\SQ_\sigma - \FZ_\sigma$ is a $\Q$-linear combination of terms of the form
\[
\kappa_{\mu}z^{|\mu|}\FZ_{\tau},
\]
where $\mu$ and $\tau$ are partitions ($\mu$ possibly containing parts of size $0$) satisfying $\ell(\tau) < \ell(\sigma)$, $3|\mu| + 3|\tau| - 2\ell(\tau) \le 3|\sigma| - 2\ell(\sigma)$, and 
$$3|\mu| + 3|\tau| - 2\ell(\tau) \equiv 3|\sigma| - 2\ell(\sigma)\ \mod 2\ . $$
\end{Theorem}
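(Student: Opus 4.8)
The plan is to package both families into generating series over all partitions and to reduce the comparison to a single substitution identity. Write $P = p_1 + p_2z + p_3z^2 + \cdots$, so that $\FZ_\sigma$ is the $p^\sigma$-coefficient of $\exp(-\{\log(1+CP)\}_\kappa)$. The point I would stress at the outset is that, for \emph{any} series argument $\Pi = \sum_{w\ge 0}\Pi_w z^w$, the series $\exp(-\{\log(1+C\Pi)\}_\kappa)$ is a \emph{universal} power series in the weight-coefficients $\Pi_w$ (with coefficients in $\Q[\kappa][[z]]$), whose coefficient at the monomial indexed by a partition $\tau$ is exactly $\FZ_\tau$. This is visible from $\{\log(1+C\Pi)\}_\kappa = \sum_j \tfrac{(-1)^{j-1}}{j}\sum_{w_1,\dots,w_j,s}\Pi_{w_1}\cdots\Pi_{w_j}\,[C^j]_{z^s}\,\kappa_{s+\sum w_l}z^{s+\sum w_l}$. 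On the other side, grouping the partitions $\rho$ in the exponent $M$ of the $\SQ$-series by their length $n=\ell(\rho)$ collapses the inner sum into powers of $P$, giving $M = \sum_{n\ge 1}\tfrac{1}{n!}\{C_n P^n\}_\kappa$, where $\{\cdot\}_\kappa$ reads the total $z$-weight (with $p_k$ carrying weight $k-1$, so that $\sum_i(k_i-1)=|\rho|-\ell(\rho)$).

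The engine is to rewrite $\sum_{n\ge 1}\tfrac{x^n}{n!}C_n$ via Lemma~\ref{exponential}. Setting $G(x) = \sum_{i\ge 1}\tfrac{f_{i1}}{i!}x^i \in \Q[z][[x]]$, the factorization $F = e^{yG}$ is, coefficient of $y^j$, precisely $\sum_{i\ge 1}\tfrac{f_{ij}}{i!}x^i = \tfrac{(-1)^{j-1}}{j}G^j$ for every $j\ge 1$; the crucial feature is that $G$ does not depend on $y$. Summing against $C^j$ over $j\ge 1$ yields
\[
\sum_{n\ge 1}\frac{x^n}{n!}\bigl(C_n - f_{n,0}(z)\bigr) = \sum_{j\ge 1}\frac{(-1)^{j-1}}{j}(CG)^j = \log\bigl(1+CG(x)\bigr),
\]
where $f_{n,0}(z)$ is the $C$-free part of $C_n$. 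Substituting $x\to P$ (legitimate, since this is an identity of $z$-series and $\{\cdot\}_\kappa$ depends only on total $z$-weight) and applying $\{\cdot\}_\kappa$ gives $M = \{\log(1+CG(P))\}_\kappa + M_\kappa$ with $M_\kappa = \sum_{n\ge 1}\tfrac1{n!}\{f_{n,0}(z)P^n\}_\kappa$. Exponentiating,
\[
\exp(-M) = \exp\bigl(-\{\log(1+CG(P))\}_\kappa\bigr)\cdot\exp(-M_\kappa),
\]
i.e. the $\SQ$-series is the $\FZ$-series with argument $P$ replaced by $G(P) = P + \sum_{\ell(\rho)\ge 2}\tfrac{f_{\ell(\rho),1}(z)\,z^{|\rho|-\ell(\rho)}}{|\Aut(\rho)|}p^\rho$, times the purely $\kappa$-valued factor $\exp(-M_\kappa)$.

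I would then extract the coefficient of $p^\sigma$. Splitting $\sigma = \alpha\sqcup\beta$ between the two factors, $[\exp(-M_\kappa)]_{p^\beta}$ is a $\Q$-combination of monomials $\kappa_\mu z^{|\mu|}$ (there is no $C$ in $M_\kappa$), while by the universality remark $[\exp(-\{\log(1+CG(P))\}_\kappa)]_{p^\alpha} = \sum_\tau c_{\alpha\tau}\FZ_\tau$ with $c_{\alpha\tau} = \bigl[\prod_i [G(P)]_{z^{\tau_i-1}}\bigr]_{p^\alpha}\in\Q$; here $\tau$ records the effective slots of $G(P)$, each slot a block of parts of $\alpha$ merged by $G$. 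Since $f_{1,0}=0$ forces each factor of $M_\kappa$ to consume at least two parts, and any nontrivial merge in $G(P)$ strictly lowers the slot count, the only term with $\ell(\tau)=\ell(\sigma)$ is the all-singleton, $\beta=\emptyset$ term, whose coefficient is $1$ (because $f_{1,1}=1$); this term is exactly $\FZ_\sigma$. Hence $\SQ_\sigma - \FZ_\sigma$ is a $\Q$-combination of $\kappa_\mu z^{|\mu|}\FZ_\tau$ with $\ell(\tau)<\ell(\sigma)$.

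The remaining work, and the main obstacle, is the quantitative control of the weight $3|\cdot|-2\ell(\cdot)$ and its parity. For this I would use the support of the $C_i$: writing $C_i = \sum_{j+3k\le i}f_{ijk}z^kC^j$, the constraint $j+3k\le i$ is already recorded, and a short induction on the recurrence \eqref{Crecur} together with the differential equation \eqref{diffeq} shows in addition that $f_{ijk}=0$ unless $i\equiv j+k \bmod 2$. Applied to a block $\rho$ with $n=\ell(\rho)$: a merge uses the $j=1$ coefficient $f_{n,1}$ (so $1+3k\le n$), producing one slot of $z$-weight $w\le |\rho|-\tfrac{2n+1}{3}$ whose contribution $3w+1$ to $3|\tau|-2\ell(\tau)$ is at most $3|\rho|-2n$; a transfer into $M_\kappa$ uses the $j=0$ coefficient $f_{n,0}$ (so $3k\le n$), producing a $\kappa$-index $\mu_i=|\rho|-n+k$ whose contribution $3\mu_i$ to $3|\mu|$ is again at most $3|\rho|-2n$. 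The parity refinement $i\equiv j+k\bmod 2$ makes both contributions congruent to $3|\rho|-2n \bmod 2$. Summing these per-block estimates over the division of $\sigma$ gives $3|\mu|+3|\tau|-2\ell(\tau)\le 3|\sigma|-2\ell(\sigma)$ together with the matching congruence mod $2$, completing the proof. The genuinely delicate input is this parity refinement of the support of $C_i$, without which the mod-$2$ clause would fail; everything else is a reorganization driven by Lemma~\ref{exponential}.
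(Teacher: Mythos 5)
Your proposal is correct, and it proves the theorem from the same two inputs as the paper: Lemma~\ref{exponential} and the support conditions $f_{i,j,k}=0$ unless $j+3k\le i$ and $j+3k\equiv i\bmod 2$. The organization, however, is genuinely different. The paper expands $\SQ_\sigma$ and $\FZ_\sigma$ as sums over set partitions of $\{1,\ldots,\ell(\sigma)\}$, writes down the candidate linear combination \eqref{dcczz} by matching the terms \eqref{g66h} that are multilinear in $C$, and verifies all remaining terms by reducing to the coefficient identity \eqref{l399}, which is Lemma~\ref{exponential} read coefficientwise through the exponential formula. You instead invoke the exponential formula once, globally: with $P=p_1+p_2z+p_3z^2+\cdots$, the exponent of the $\SQ$ series is $\sum_{n\ge1}\frac{1}{n!}\{C_nP^n\}_\kappa$, Lemma~\ref{exponential} integrates to $\sum_{n\ge1}\frac{x^n}{n!}\bigl(C_n-f_{n,0}\bigr)=\log\bigl(1+CG(x)\bigr)$, and the substitution $x=P$ exhibits the $\SQ$ generating series as the $\FZ$ series evaluated at the argument $G(P)$, times the $C$-free factor $\exp(-M_\kappa)$. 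This buys transparency: triangularity ($\ell(\tau)<\ell(\sigma)$ off the diagonal, diagonal coefficient $1$) falls out of $f_{1,0}=0$ and $f_{1,1}=1$, and the set-partition bookkeeping of \eqref{dcczz} is absorbed into one substitution identity, while the per-block weight and parity estimates at the end are identical in content to the paper's verification of the conditions in the theorem statement.

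Two technical points deserve emphasis. First, your universality step --- that substituting $\Pi=G(P)$ into $\exp(-\{\log(1+C\Pi)\}_\kappa)$ yields $\Q$-combinations of the $\FZ_\tau$ --- is legitimate precisely because each coefficient $[G(P)]_{z^w}$ is a $z$-free polynomial in the $p_i$, so the substitution preserves the $z$-grading and commutes with $\{\cdot\}_\kappa$; this should be said explicitly, since it is the only place the argument could break. Second, you rightly observe that the parity constraint on $f_{i,j,k}$ requires proof (the paper only asserts it). The induction must be run on the recurrence for the $f_{ij}$ derived directly from \eqref{Crecur} and \eqref{diffeq}, namely $f_{i+1,j}=12z^2f_{ij}'+4(j-i)zf_{ij}+(j+1)f_{i,j+1}-(j-1)f_{i,j-1}$; note that the recurrence displayed in the paper's proof of Lemma~\ref{exponential} omits the derivative term. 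With the correct recurrence the parity claim is a one-line induction, and the inequality $j+3k\le i$ (which you import from the paper's definition of the $f_{i,j,k}$) also follows, because the coefficient $12(k-1)+4(j-i)$ multiplying the critical contribution $f_{i,j,k-1}$ vanishes exactly in the extremal case $j+3(k-1)=i$.
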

\begin{proof}
We will need some additional notation for subpartitions. If $\sigma$ is a partition of length $\ell(\sigma)$ with parts $\sigma_1,\sigma_2,\ldots$ (ordered by size) and $S$ is a subset of $\{1,2,\ldots, \ell(\sigma)\}$, then let $\sigma_S \subset \sigma$ denote the subpartition consisting of the parts $(\sigma_i)_{i\in S}$.

Using this notation, we explicitly expand $\SQ_\sigma$ and $\FZ_\sigma$ as sums over set partitions of $\{1,\ldots, \ell(\sigma)\}$:
\[
\SQ_\sigma = \frac{1}{|\Aut(\sigma)|}\sum_{P\vdash \{1,\ldots,\ell(\sigma)\}}\prod_{S\in P} \left(\sum_{j,k}-f_{|S|,j,k}\{z^{|\sigma_S|-|S|+k}C^j\}_\kappa\right),
\]
\[
\FZ_\sigma = \frac{1}{|\Aut(\sigma)|}\sum_{P\vdash \{1,\ldots,\ell(\sigma)\}}\prod_{S\in P} \left((-1)^{|S|}(|S|-1)!\{z^{|\sigma_S|-|S|}C^{|S|}\}_\kappa\right).
\]
Matching coefficients for terms of the form \eqref{g66h} tells us what the linear combination must be. We claim 
\begin{align}
\label{dcczz}
\SQ_\sigma = &\sum_{\substack{R \vdash \{1,\ldots,\ell(\sigma)\} \\ P \sqcup Q = R \\ k:R\to \Z_{\ge 0}}}\frac{|\Aut(\sigma')|}{|\Aut(\sigma)|}\times \\ \nonumber
 &\prod_{S\in P}(-f_{|S|,0,k(S)}\kappa_{|\sigma_S|-|S|+k(S)}z^{|\sigma_S|-|S|+k(S)})\prod_{S\in Q}(f_{|S|,1,k(S)})\FZ_{\sigma'},
\end{align}
where $\sigma'$ is the partition with parts $|\sigma_S| - |S| + 1 + k(S)$ for $S\in Q$. 
Using the vanishing $f_{i,j,k} = 0$ unless $j + 3k \le i$ and $j + 3k \equiv i\mod 2$, we easily check the above expression for $\SQ_\sigma$
is of the desired type.

Expanding $\SQ_\sigma$ and $\FZ_{\sigma'}$ in \eqref{dcczz} and canceling out the terms involving the $f_{i,0,k}$ coefficients, it remains to prove 
\begin{align*}
&\sum_{\substack{Q\vdash \{1,\ldots,\ell(\sigma)\} \\ k:Q\to\Z_{\ge 0} \\ j:Q\to\mathbb{N}}}\prod_{S\in Q} \left(-f_{|S|,j(S),k(S)}\{z^{|\sigma_S|-|S|+k(S)}C^{j(S)}\}_\kappa\right) \\
&=
\sum_{\substack{Q\vdash \{1,\ldots,\ell(\sigma)\} \\ k:Q\to\Z_{\ge 0}}}\prod_{S\in Q}(f_{|S|,1,k(S)})\sum_{P\vdash \{1,\ldots,\ell(\sigma')\}}\prod_{S\in P}\left((-1)^{|S|}(|S|-1)!\{z^{|(\sigma')_S| - |S|}C^{|S|}\}_\kappa\right).
\end{align*}

A single term on the left side of the above equation is determined by choosing a set partition $Q_{\text{left}}$ of $\{1,\ldots,\ell(\sigma)\}$ and then for each part $S$ of $Q_{\text{left}}$ choosing a positive integer $j(S)$ and a nonnegative integer $k_{\text{left}}(S)$. We claim that this term is the sum of the terms of the right side given by choices $Q_{\text{right}}$, $k_{\text{right}}$, $P$ 
such that $Q_{\text{right}}$ is a refinement of $Q_{\text{left}}$ that breaks each part $S$ in $Q_{\text{left}}$ into exactly $j(S)$ parts in $Q_{\text{right}}$, $P$ is the associated grouping of the parts of $Q_{\text{right}}$, and the $k_{\text{right}}(S)$ satisfy 
$$k_{\text{left}}(S) = \sum_{T\subseteq S}k_{\text{right}}(T)\ .$$
 These terms all are integer multiples of the same product of $\{z^aC^b\}_\kappa$ factors, so we are left with the identity
\begin{equation}\label{l399}
\frac{(-1)^{j_0-1}}{(j_0-1)!}f_{i_0,j_0,k_0} = \sum_{\substack{P\vdash \{1,\ldots,i_0\} \\ |P| = j_0 \\ k:P\to\Z_{\ge 0} \\ |k| = k_0}}\prod_{S\in P}f_{|S|,1,k(S)}.
\end{equation}
to prove.

But by the exponential formula, identity \eqref{l399} is simply a
restatement of Lemma~\ref{exponential}.
\end{proof}

The conditions on the linear combination in Theorem~\ref{combination} are precisely those needed so that multiplying by $E$ and taking the coefficient of $z^r$ allows us to write any $\SQ$ relation as a linear combination of $\FZ$ relations. The associated matrix is triangular with respect to the partial ordering of partitions by size, 
and the diagonal entries are equal to $1$. Hence, the matrix
 is invertible. 
We conclude the $\SQ$ relations are equivalent to the $\FZ$ relations.

\vspace{+16 pt}
\noindent Departement Mathematik, ETH Z\"urich \\
\noindent rahul@math.ethz.ch

\vspace{+8 pt}
\noindent
Department of Mathematics, University of Michigan\\
pixton@umich.edu

\end{document}